\theoremstyle{plain}
\newtheorem{theorem}{Theorem}[section]
\newtheorem{lemma}[theorem]{Lemma}
\newtheorem{proposition}[theorem]{Proposition}
\newtheorem{corollary}[theorem]{Corollary}
\theoremstyle{definition}
\newtheorem{definition}[theorem]{Definition}
\newtheorem{examples}[theorem]{Examples}
\newtheorem{example}[theorem]{Example}
\newtheorem{assumption}[theorem]{Assumption}
\theoremstyle{remark}
\newtheorem{remark}[theorem]{Remark}
\newtheorem*{notation}{Notation}
\newlist{tfae}{enumerate}{1}
\setlist[tfae,1]{label=(\roman*)}
\newcommand{\ff}{\mathfrak{f}}
\newcommand{\calM}{\mathcal{M}}
\newcommand{\fF}{\mathfrak{F}}
\DeclareMathOperator{\upc}{\mathord{\uparrow}}
\newcommand{\catfont}[1]{\mathsf{#1}}
\newcommand{\catX}{\catfont{X}}
\newcommand{\catA}{\catfont{A}}
\newcommand{\catB}{\catfont{B}}
\newcommand{\catC}{\catfont{C}}
\newcommand{\catD}{\catfont{D}}
\newcommand{\SET}{\catfont{Set}}
\newcommand{\POSS}{\catfont{Pos}}
\newcommand{\SUP}{\catfont{Sup}}
\newcommand{\DSUP}{\catfont{DSup}}
\newcommand{\TOP}{\catfont{Top}}
\newcommand{\STCOMP}{\catfont{StablyComp}}
\newcommand{\SPEC}{\catfont{Spec}}
\newcommand{\CONTLAT}{\catfont{ContLat}}
\newcommand{\ALat}{\catfont{ALat}}
\newcommand{\ADom}{\catfont{ADom}}
\newcommand{\op}{\mathrm{op}}
\newcommand{\alg}{\mathrm{alg}}
\DeclareMathOperator{\Spl}{Spl}
\DeclareMathOperator{\kar}{kar}
\DeclareMathOperator{\omono}{ord-mono}
\DeclareMathOperator{\oepi}{ord-epi}
\DeclareMathOperator{\id}{id}
\newcommand{\longto}{\longrightarrow}
\newcommand{\modto}{\mathrel{\mathmakebox[\widthof{$\xrightarrow{\rule{1.45ex}{0ex}}$}]
{\xrightarrow{\rule{1.45ex}{0ex}}\hspace*{-2.8ex}{\circ}\hspace*{1ex}}}} 
\newcommand\adjunct[2]{\xymatrix@=8ex{\ar@{}[r]|{\top}\ar@<1mm>@/^2mm/[r]^{{#2}} & \ar@<1mm>@/^2mm/[l]^{{#1}}}}
\newcommand\adjunctop[2]{\xymatrix@=8ex{\ar@{}[r]|{\bot}\ar@<1mm>@/^2mm/[r]^{{#2}} & \ar@<1mm>@/^2mm/[l]^{{#1}}}}
\newcommand{\monadfont}[1]{\mathbbm{#1}}
\newcommand{\mT}{\monadfont{T}}
\newcommand{\mD}{\monadfont{D}}
\newcommand{\mI}{\monadfont{I}}
\newcommand{\mF}{\monadfont{F}}
\newcommand{\mFO}{\monadfont{F}_1}
\newcommand{\mFp}{\monadfont{F}_2}
\newcommand{\mFc}{\monadfont{F}_c}
\newcommand{\monad}{(T,m,e)}
\newcommand{\dmonad}{(D,m,e)}
\newcommand{\imonad}{(I,m,e)}
\newcommand{\fmonad}{(F,m,e)}
\newcommand{\fOmonad}{(F_1,m,e)}
\newcommand{\fpmonad}{(F_2,m,e)}
\newcommand{\fcmonad}{(F_c,m,e)}
\newcommand{\xupt}{\catX^{\mT}}
\newcommand{\xdot}{\catX_{\mT}}
\DeclareMathOperator{\Id}{Id}
\newcommand{\eps}{\varepsilon}
\newcommand{\df}[1]{\emph{\textbf{#1}}}
\newcommand{\To}{\longrightarrow}
\newcommand{\medvee}{\mbox{\Large $\vee$}}
\newcommand{\medcap}{\mathbin{\mbox{\Large $\cap$}}}
\newcommand{\s}{\mathrm{S}}
\newcommand{\Hhom}{\mathbb{H}\mathrm{om}}
\newcommand{\cote}{\mathord\pitchfork}
\newcommand{\goo}{\Omega}
\newcommand{\ra}{\rightarrow}
\newcommand{\ga}{\alpha}
\newcommand{\vdir}{\bigvee^{\upc}}
\newcommand{\KZ}{Kock-Z\"oberlein}
\title{Aspects of algebraic Algebras}
\author{Dirk Hofmann}
\address{CIDMA, Department of Mathematics,
  University of Aveiro, Portugal}
\email{dirk@ua.pt}
\author{Lurdes Sousa}
\address{CMUC, University of Coimbra and Polytechnic Institute of Viseu,
  Portugal}
\email{sousa@estv.ipv.pt}
\thanks{Research partially supported by Centro de Investiga\c{c}\~ao e
  Desenvolvimento em Matem\'atica e Aplica\c{c}\~oes da Universidade
  de Aveiro/FCT, and by Centro de Matem\'{a}tica da Universidade de
  Coimbra -- UID/MAT/00324/2013 -- UID/MAT/04106/2013, funded by the
  Portuguese Government through FCT/MEC and co-funded by the European
  Regional Development Fund through the Partnership Agreement PT2020}
\keywords{Kock-Z\"oberlein monad, filter monad, continuous lattice,
  algebraic lattice, weighted (co)limit, idempotent split completion}
\subjclass[2010]{06B23, 06B35, 18A35, 18A40, 18B30, 18C20, 18D20}
\begin{document}

\begin{abstract}
  In this paper we investigate important categories lying strictly between the
  Kleisli category and the Eilenberg--Moore category, for a Kock-Z\"oberlein
  monad on an order-enriched category. Firstly, we give a characterisation of
  free algebras in the spirit of domain theory. Secondly, we study the existence
  of weighted (co)limits, both on the abstract level and for specific categories
  of domain theory like the category of algebraic lattices. Finally, we apply
  these results to give a description of the idempotent split completion of the
  Kleisli category of the filter monad on the category of topological spaces.
\end{abstract}

\dedicatory{We dedicate this paper to Jirka Adámek whose mathematics has
  enchanted us since the first seminars\\ in Bremen and in Coimbra.}

\maketitle

\section{Introduction}
\label{sec:introduction}

The Eilenberg-Moore categories of idempotent monads are precisely the full
reflective isomorphism-closed subcategories of the base category. A substantial
study in category theory has been dedicated to full reflective subcategories
since the 1970's, and this is one of the many subjects to which Ji\v{r}\'{i}
Ad\'amek has given a remarkable contribution (see \cite{AK81,AR88,ART88}, just
to name a few).  The notion of {\KZ} monad (\cite{Koc95,Zob76}), also named
lax-idempotent monad, is a fruitful generalisation of idempotent monads to the
more general setting of 2-categories. In particular, it provides a new insight
into important examples of domain theory and topology, when our 2-categories are
just order-enriched categories. On this subject, we refer to a series of papers
in the late 1990's by M.\ Escard{\'o} and others (e.g.,
\cite{Esc98,EF99,Esc03}). In this case, the Eilenberg-Moore categories are
reflective subcategories of the base category as well; however, in general they
are not anymore full. In \cite{ASV15} and other related papers, this kind of
subcategories were called \emph{KZ-monadic subcategories}. As demonstrated in a
series of recent papers \cite{CS11,ASV15,Sou17,AS17}, several important
well-known properties and notions on full reflective subcategories of ordinary
category theory have an order-enriched counterpart when we replace full
reflectivity by KZ-monadicity.

Associated with each monad $\mT=\monad$ on a category $\catX$, we have a
faithfully full functor $E:\xdot {\hookrightarrow} \xupt$ between the Kleisli
category $\xdot$ and the Eilenberg-Moore category $\xupt$.  Moreover, we have
adjunctions $F_{\mT}\dashv U_{\mT}: \xdot \To \catX$ and
$F^{\mT}\dashv U^{\mT}: \xupt \To \catX$ with $U^{\mT}E=U_{\mT}$ and
$EF_{\mT}=F^{\mT}$. In fact, these two adjunctions are the initial and terminal
objects of the obvious category of all adjunctions which induce the monad $\mT$.

When $\mT$ is an idempotent monad (i.e., the multiplication $m$ is a natural
isomorphism), $\xupt$ can be identified with a full subcategory of $\catX$, and
$\xdot\simeq \xupt$. Thus, when the functor $T$ is injective on objects, as it
happens in most significant examples, $\xupt$ is just the closure under
isomorphisms of $\xdot$ in $\catX$. Hence, there are no interesting
subcategories strictly between $\xdot$ and $\xupt$ to be considered.

The situation is dramatically different when we work with Kock-Z\"{o}berlein
monads in order-enriched categories. In this case we have yet $\xupt$ as a
(usually non-full) subcategory of $\catX$. And $\xupt$ is now, on objects and
morphisms, the closure of $\xdot$ under left adjoint retractions on $\catX$
(\cite{CS11}). But, between $\xdot$ and $\xupt$ there are interesting
subcategories which are quite distinct. As an example, take the open filter
monad $\mF$ over $\catX=\TOP_0$. Then $\catX^{\mF}$ is precisely the category of
continuous lattices and maps preserving directed suprema and arbitrary
infima. And between $\catX_{\mF}$ and $\catX^{\mF}$ we have at least two
remarkable subcategories: the category $\ALat$ of algebraic lattices is properly
contained in $\catX^{\mF}$, and between $\catX_{\mF}$ and $\ALat$ we have the
idempotent completion of $\catX_{\mF}$ which we characterise here as consisting
of all algebraic lattices whose compact elements form the dual of a frame
(Section~\ref{sec:idemp-split-compl}). We also prove that, for that monad $\mF$,
$\ALat$ is precisely the closure under weighted limits of $\catX_{\mF}$ in
$\catX^{\mF}$ (then, also in $\TOP_0$).

In this paper we embark on a study of important categories lying strictly
between the Kleisli category and the Eilenberg--Moore category, for a
Kock-Z\"oberlein monad on an order-enriched category; with particular focus on
various filter monads on the category $\TOP_0$ of T$_0$ topological spaces and
continuous maps. After recalling the necessary background material in
Section~\ref{sec:backgr-mater-kock}, the aim of
Section~\ref{sec:abstr-algebr-objects} is to give a general treatment of the
notion of algebraic lattice. In continuation of \cite{RW04}, where the authors
observe that ``these theorems characterizing completely distributive lattices
are not really about lattices'' but rather ``about a mere monad $D$ on a mere
category'', in Theorem~\ref{thm:char_algebraic} we give a characterisation of
free algebras for a general Kock-Z\"oberlein monad, the \emph{algebraic
  algebras}, which resembles the classical notion of (totally) algebraic
lattice.

Taking seriously the fact that $\TOP_0$ is order-enriched forces us to not just
consider ordinary completeness but rather study \emph{weighted} limits and
colimits. In this spirit, in Section~\ref{sec:weight-colim-cogen} we prove an
interesting general result which has an important application in
Section~\ref{sec:cocompl-subc-xupt}: every order-enriched category with weighted
limits and a regular cogenerator has also weighted colimits.

In Section~\ref{sec:cocompl-subc-xupt} we consider the full subcategory of
$\catX^\mT$ defined by those algebras which are in a suitable sense
\emph{cogenerated} by the Sierpi\'nski space. For the various filter monads we
show that these algebras coincide with well-known objects in domain theory:
algebraic lattices and spectral spaces. In particular, we conclude that the
corresponding categories have weighted limits and weighted colimits.

Finally, in Section~\ref{sec:idemp-split-compl}, we consider the filter monad
$\mF$ on $\TOP_0$. By the results of the previous section, its Kleisli category
is a full subcategory of the category $\ALat$ of algebraic lattices with maps
preserving directed suprema and all infima. As the latter one is complete, it
contains in particular the idempotent split completion of $(\TOP_0)_\mF$; and we
identify the objects of this completion as precisely those algebraic lattices
where the compact elements form the dual of a frame.

\section{Background material on Kock-Z\"oberlein monads}
\label{sec:backgr-mater-kock}

In this section we recall the main facts about Kock-Z\"oberlein monads on
order-enriched categories needed in this paper. For general 2-categories, this
type of monads were introduced independently by Volker Z\"oberlein \cite{Zob76}
and Anders Kock (see \cite{Koc95}). We also refer to \cite{Esc98} and
\cite{EF99} for a detailed study of Kock-Z\"oberlein monads in the context of
domain theory, the one treated in this paper. In particular, the three theorems
of this section are presented there (see also \cite{Koc95}).  In this case, we
work with a special type of 2-categories, the \df{order-enriched} categories,
that is, categories enriched in the category $\POSS$ of partially ordered sets
and monotone maps. This means that the hom-sets are posets and the composition
of morphisms preserves the order on the left and on the right. An order-enriched
functor between order-enriched categories is one which preserves the order of
each hom-poset.

\begin{definition}
  A monad $\mT=\monad$ on an order-enriched category $\catX$ is called
  \df{order-enriched} whenever $T:\catX \to \catX$ is so. An order-enriched
  monad $\mT=\monad$ is \df{of Kock-Z\"oberlein type} whenever $Te_X\le e_{TX}$,
  for all object $X$ in $\catX$.
\end{definition}

We note that, for an order-enriched monad $\mT=\monad$, the full and faithful
functor $\catX_\mT\to\catX^\mT$ of the Kleisli category into the Eilenberg-Moore
category is also an order-isomorphism on hom-sets. The condition
``$Te_X\le e_{TX}$'' in the definition of Kock-Z\"oberlein monad is somehow
arbitrarily chosen, the following theorem (see \cite{Koc95}) presents an
alternative descriptions.

\begin{theorem}\label{thm:char-KZ-monad}
  Let $\mT=\monad$ be an order-enriched monad on an order-enriched category
  $\catX$. For every object $X$ in $\catX$, the following assertions are
  equivalent.
  \begin{tfae}
  \item $Te_X\le e_{TX}$.
  \item $m_X\dashv e_{TX}$.
  \item $Te_X\dashv m_X$.
  \end{tfae}
\end{theorem}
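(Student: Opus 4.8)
The strategy is to establish a cycle of implications (i) $\Rightarrow$ (ii) $\Rightarrow$ (iii) $\Rightarrow$ (i), using the monad and naturality laws together with the order-enrichment of $T$ and the composition. Throughout I will exploit the fact that in an order-enriched category an adjunction $f \dashv g$ between morphisms $f \colon A \to B$ and $g \colon B \to A$ amounts exactly to the two inequalities $\id_A \le g f$ and $f g \le \id_B$, so each implication reduces to verifying two such inequalities (one of which will typically be an equation coming from a monad law). The recurring identities I expect to use are the unit laws $m_X \cdot e_{TX} = \id_{TX} = m_X \cdot T e_X$, the associativity $m_X \cdot m_{TX} = m_X \cdot T m_X$, and the naturality squares $e_{TX} \cdot f = T f \cdot e_X$ and $m_Y \cdot T^2 f = Tf \cdot m_X$; monotonicity of $T$ lets me apply $T$ to the hypothesis inequality, and monotonicity of composition lets me pre- and post-compose inequalities with fixed morphisms.

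For (i) $\Rightarrow$ (ii): the counit inequality $m_X \cdot e_{TX} \le \id_{TX}$ is in fact the equation given by the unit law, so it holds trivially. For the unit inequality $\id_{T^2X} \le e_{TX} \cdot m_X$, I start from the hypothesis $T e_X \le e_{TX}$ instantiated suitably — more precisely, applying $T$-naturality of $e$ at the morphism $m_X$, one has $e_{T^2X} \cdot m_X = T m_X \cdot e_{TX}$ after the appropriate bookkeeping; combining this with the hypothesis and the associativity/unit laws yields the desired inequality. For (ii) $\Rightarrow$ (iii): from $m_X \dashv e_{TX}$ one has $\id_{T^2X} \le e_{TX} \cdot m_X$; applying the order-enriched functor $T$ gives $\id_{T^3X} \le T e_{TX} \cdot T m_X$, and then I use naturality of $m$ and the unit law to transport this down to the inequality $\id_{T^2X} \le m_X \cdot T e_X$, which is the unit of $T e_X \dashv m_X$; the counit $T e_X \cdot m_X \le \id_{T^2X}$ then follows either from the hypothesis $m_X \cdot e_{TX} \le \id$ precomposed and postcomposed appropriately, or directly from $T$ applied to $m_X \cdot e_{TX} = \id$ together with naturality. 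Finally (iii) $\Rightarrow$ (i): from $T e_X \dashv m_X$ one has the counit $T e_X \cdot m_X \le \id_{T^2X}$; postcomposing with... rather, precomposing with $e_{T^2X}$... the cleanest route is to precompose the unit inequality $\id_{TX} \le m_X \cdot T e_X$ — wait, the unit of $Te_X \dashv m_X$ lives at level $T^2X$; instead I compose $T e_X \cdot m_X \le \id$ with $e_{TX}$ on the right and use the unit law $m_X \cdot e_{TX} = \id$ to collapse, obtaining $T e_X \le e_{TX}$ after a naturality step.

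The main obstacle, and the place where care is genuinely needed rather than routine, is getting the \emph{indexing and naturality bookkeeping right}: the three conditions live at the objects $TX$ and $T^2X$, and one repeatedly needs to instantiate naturality of $e$ and $m$ at morphisms such as $e_X$, $m_X$, or $T e_X$, and to invoke the correct monad law ($m \cdot Te = \id$ versus $m \cdot eT = \id$) — these are easy to confuse and a wrong instantiation silently breaks the argument. A secondary subtlety is that each "$\Rightarrow$" really only needs the hypothesis at the single object $X$ (and sometimes at $TX$), so I will be careful to note that the equivalence is genuinely pointwise in $X$ as the theorem asserts, rather than secretly using the Kock-Z\"oberlein inequality at all objects. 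Once the diagram chases are lined up, each individual inequality is a one- or two-line consequence of monotonicity plus a monad axiom.
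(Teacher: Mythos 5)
The paper itself gives no proof of this theorem --- it is quoted from \cite{Koc95} --- so your attempt has to stand on its own. The overall shape is right: each adjunction reduces to two inequalities, one of which is a monad unit law, so each implication amounts to one nontrivial inequality. Indeed (ii) $\Rightarrow$ (i), (iii) $\Rightarrow$ (i) and (i) $\Rightarrow$ (iii) are genuine one-liners at the single object $X$: for instance $Te_X\le e_{TX}\cdot m_X\cdot Te_X=e_{TX}$ for the first, and $Te_X\cdot m_X=m_{TX}\cdot TTe_X\le m_{TX}\cdot Te_{TX}=\id_{TTX}$ for the last (naturality of $m$ at $e_X$, then $T$ applied to (i), then the unit law at $TX$).

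The genuine gap is in any implication \emph{into} (ii), precisely the step you defer to ``appropriate bookkeeping''. The naturality square you write, $e_{TTX}\cdot m_X=Tm_X\cdot e_{TX}$, does not typecheck ($m_X$ ends at $TX$ while $e_{TTX}$ starts at $TTX$); the correct square is $e_{TX}\cdot m_X=Tm_X\cdot e_{TTX}$. Using it, the required inequality $\id_{TTX}\le e_{TX}\cdot m_X$ becomes $Tm_X\cdot TTe_X\le Tm_X\cdot e_{TTX}$, and the only available route is $TTe_X\le Te_{TX}\le e_{TTX}$: the first inequality is $T$ applied to (i) at $X$, but the second is (i) at the object $TX$. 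This contradicts your closing claim that each implication uses the hypothesis only at the single object $X$ --- and that claim is the crux, since the theorem is phrased pointwise. Your sketch of (ii) $\Rightarrow$ (iii) suffers the same disease in another guise: the ``unit'' $\id\le m_X\cdot Te_X$ is an equality valid for \emph{every} monad (and lives at $TX$, not $TTX$), while the counit $Te_X\cdot m_X\le\id_{TTX}$ cannot follow from $m_X\cdot e_{TX}=\id$ or from $T$ applied to it, since those also hold for every monad; the correct route is (ii) $\Rightarrow$ (i) $\Rightarrow$ (iii) as above. So you must either invoke the hypothesis at $TX$ when proving (ii) (which is what the standard argument does, and is harmless if, as in the definition of Kock--Z\"oberlein monad, all three conditions are read as quantified over all objects), or supply a genuinely pointwise argument for (i) $\Rightarrow$ (ii), which your proposal does not contain.
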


We turn now our attention to Eilenberg--Moore algebra structures, which can be
characterised using adjunction (see \cite{Koc95}).

\begin{theorem}\label{d:thm:1}
  Let $\mT=\monad$ be a Kock-Z\"oberlein monad on an order-enriched category
  $\catX$ and let $\alpha:TX\to X$ in $\catX$. Then the following assertions are
  equivalent.
  \begin{tfae}
  \item $\alpha:TX\to X$ is a $\mT$-algebra structure on $X$.
  \item $\alpha\cdot e_X=\id_X$.
  \item $\alpha\dashv e_X$.
  \end{tfae}
\end{theorem}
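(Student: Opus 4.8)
The plan is to prove the cycle of implications (i) $\Rightarrow$ (ii) $\Rightarrow$ (iii) $\Rightarrow$ (i). The implication (i) $\Rightarrow$ (ii) is immediate, since $\alpha\cdot e_X=\id_X$ is one of the two defining equations of a $\mT$-algebra structure. For (ii) $\Rightarrow$ (iii), recall that in an order-enriched category the hom-posets are partial orders, so the triangle identities of an adjunction hold automatically and it suffices to verify the unit and counit inequalities. The counit inequality $\alpha\cdot e_X\le\id_X$ is exactly (ii); for the unit inequality $\id_{TX}\le e_X\cdot\alpha$ I would apply the order-enriched functor $T$ to (ii) to get $T\alpha\cdot Te_X=\id_{TX}$, then post-compose the Kock--Z\"oberlein inequality $Te_X\le e_{TX}$ with $T\alpha$ and use naturality of $e$ to obtain $\id_{TX}=T\alpha\cdot Te_X\le T\alpha\cdot e_{TX}=e_X\cdot\alpha$.

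The implication (iii) $\Rightarrow$ (i) is the substantial one, and its key step is the inequality $m_X\le T\alpha$. Applying the order-enriched functor $T$ to $\alpha\dashv e_X$ gives $T\alpha\dashv Te_X$, hence $\id_{TTX}\le Te_X\cdot T\alpha$; post-composing $Te_X\le e_{TX}$ with $T\alpha$ yields $\id_{TTX}\le e_{TX}\cdot T\alpha$, and transposing this inequality along the adjunction $m_X\dashv e_{TX}$ of Theorem~\ref{thm:char-KZ-monad} (so that $m_X\le T\alpha$ iff $\id_{TTX}\le e_{TX}\cdot T\alpha$) gives $m_X\le T\alpha$. With this in hand the associativity law $\alpha\cdot T\alpha=\alpha\cdot m_X$ follows: post-composing $m_X\le T\alpha$ with $\alpha$ gives $\alpha\cdot m_X\le\alpha\cdot T\alpha$, while for the reverse inequality I would transpose $\alpha\cdot T\alpha\le\alpha\cdot m_X$ along $\alpha\dashv e_X$ into the equivalent $T\alpha\le e_X\cdot\alpha\cdot m_X$ and then compute $e_X\cdot\alpha\cdot m_X=T\alpha\cdot e_{TX}\cdot m_X\ge T\alpha$, using naturality of $e$ and the unit inequality $\id_{TTX}\le e_{TX}\cdot m_X$ of $m_X\dashv e_{TX}$.

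What remains, and what I expect to be the main obstacle, is to recover the unit law $\alpha\cdot e_X=\id_X$ from $\alpha\dashv e_X$: the adjunction only furnishes the counit inequality $\alpha\cdot e_X\le\id_X$, which must be improved to an equality. The plan is to first establish $T(\alpha\cdot e_X)=\id_{TX}$, which follows by combining $m_X\le T\alpha$ with the monad identity $m_X\cdot Te_X=\id_{TX}$ (giving $\id_{TX}=m_X\cdot Te_X\le T\alpha\cdot Te_X=T(\alpha\cdot e_X)$) and $T$ applied to the counit (giving the reverse inequality). Naturality of $e$ then yields $e_X\cdot(\alpha\cdot e_X)=T(\alpha\cdot e_X)\cdot e_X=e_X$, and one concludes $\alpha\cdot e_X=\id_X$ by cancelling $e_X$ --- the one genuinely delicate point, turning on order-monicity of the unit $e_X$. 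Together with the associativity law already obtained this shows $\alpha$ is a $\mT$-algebra structure, closing the cycle; the equivalence with (ii) is then apparent. (One could equally organise the argument as (i) $\Leftrightarrow$ (ii) together with (ii) $\Leftrightarrow$ (iii), but it would rely on exactly the same ingredients.)
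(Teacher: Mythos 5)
The paper gives no proof of this theorem---it is recalled from \cite{Koc95}---so your argument can only be judged on its own terms. Most of it is correct and cleanly executed: (i)$\Rightarrow$(ii) and (ii)$\Rightarrow$(iii) are fine, and so is the associativity half of (iii)$\Rightarrow$(i); in particular the derivation of $m_X\le T\alpha$ by transposing $\id_{TTX}\le e_{TX}\cdot T\alpha$ along $m_X\dashv e_{TX}$, and the two inequalities yielding $\alpha\cdot T\alpha=\alpha\cdot m_X$, all check out.

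The gap is exactly the step you flag as delicate and then wave through: cancelling $e_X$ from $e_X\cdot(\alpha\cdot e_X)=e_X$ requires $e_X$ to be monic, and nothing in the hypotheses supplies this. By Proposition~\ref{d:prop:1}, all units being order-monos is equivalent to $T$ being order-faithful; that holds in every example of the paper, but not for an arbitrary Kock-Z\"oberlein monad, and the implication (iii)$\Rightarrow$(ii) genuinely fails without it. Concretely, let $T$ be the constant functor on $\POSS$ at the one-element poset $1$, with $e_X$ the unique map $X\to 1$ and $m_X=\id_1$; this is an idempotent, hence Kock-Z\"oberlein, monad. For $X$ the two-element chain, the map $\alpha\colon 1\to X$ picking the bottom element satisfies $\alpha\dashv e_X$, yet $\alpha\cdot e_X$ is the constant map at the bottom, not $\id_X$. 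In general, given $\alpha\dashv e_X$ one always has $e_X\cdot\alpha\cdot e_X=e_X$, and $\alpha\cdot e_X=\id_X$ holds precisely when $e_X$ is monic; so your outline cannot be completed from the stated hypotheses alone. The statement is usually made safe either by strengthening (iii) to require that the counit of $\alpha\dashv e_X$ be the identity (Kock's ``invertible counit'' condition) or by assuming $T$ order-faithful, and you should make one of these explicit rather than leaving the cancellation as an unexamined step.
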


As a consequence of the equivalence (i) $\iff$ (iii) of the above theorem, the
Eilenberg-Moore category $\xupt$ is a subcategory of $\catX$ (up to isomorphism
of categories). Moreover, $\catX^\mT$ is also an order-enriched category with
the order inherited from $\catX$.

\begin{remark}\label{d:rem:1}
  Let $f:X\to Y$ be a left adjoint in $\catX$ between $\mT$-algebras, with
  $f\dashv g$. Then, using Theorem~\ref{d:thm:1}, we have that the equality
  $Tg\cdot e_Y=e_X\cdot g$ implies $\beta\cdot Tf=f\cdot \alpha$ by unicity of
  adjoints. Consequently, every left adjoint between $\mT$-algebras is a
  homomorphism. In the general setting of 2-categories, this is Proposition~2.5
  of \cite{Koc95}.
\end{remark}

Before presenting examples, we recall some standard notions from order theory
and topology.
\begin{definition}\label{def:domains}
  In (1)-(6) we follow the terminology of \cite{GHK+03}.
  \begin{enumerate}
  \item A subset $D\subseteq X$ of a partially ordered set $X$ is called
    \df{directed} whenever $D\neq\varnothing$ and, for all $x,y\in D$, there is
    some $z\in D$ with $x\le z$ and $y\le z$. We are going to use the notation
    $\bigvee^{\upc}D$ to express the supremum of a set $D$ and, at the same
    time, indicate that $D$ is directed.
  \item The \df{way below} relation $\ll$ is defined as follows: $x\ll y$
    provided that, for every directed subset $D\subseteq X$, if
    $y\leq \bigvee^{\upc} D$, then $x\leq d$ for some $d\in D$. An element
    $x\in X$ is called \df{compact} whenever $x\ll x$.
  \item The \df{totally below} relation $\lll$ is defined in a similar way:
    $x\lll y$ whenever, for every subset $S\subseteq X$, if $y\leq \bigvee S$,
    then $x\leq d$ for some $d\in S$. An element $x\in X$ is called \df{totally
      compact} whenever $x\lll x$.
  \item A partially ordered set $X$ is called \df{directed complete} whenever
    every directed subset of $X$ has a supremum. Furthermore, $X$ is said to be
    \df{bounded complete} if every subset with an upper bound has a least one;
    equivalently, it has all non-empty infima.
  \item A partially ordered set $X$ is \df{continuous} if each one of its
    elements $x$ is the directed supremum of all elements $y$ with $y\ll x$. A
    \df{domain} is a continuous poset with directed suprema.  Furthermore, a
    complete partially ordered set $X$ is called \df{completely distributive}
    whenever every $x\in X$ is the supremum of all elements $y$ with $y\lll x$.
  \item\label{item:algebraic} A domain $X$ with each $x\in X$ satisfying the
    equality $x=\bigvee^{\upc}\{y\in X\mid y\leq x, \, y\ll y\}$ is an
    \df{algebraic domain}. The designation of \df{continuous lattice}
    \cite{Sco72} is used for a domain which is also a lattice; hence, a
    continuous lattice is a complete and continuous partially ordered
    set. Analogously, an \df{algebraic lattice} is an algebraic domain which is
    also a lattice. A completely distributive partially ordered set where
    $x=\bigvee\{y\in X\mid y\leq x, \, y\lll y\}$, for every $x\in X$, is called
    \df{totally algebraic}.
  \item\label{item:stablycompact} A topological space $X$ is called \df{stably
      compact} whenever $X$ is sober, locally compact and every finite
    intersection of compact saturated subsets is compact (where to be saturated
    means to be an upper subset with respect to the specialisation order, see
    \cite{Jun04}). A continuous map $f:X\to Y$ between stably compact spaces is
    called \df{spectral} whenever $f^{-1}(K)$ is compact, for every compact
    saturated subset $K\subseteq Y$. We denote by $\STCOMP$ the category of
    stably compact spaces and spectral maps. A stably compact space $X$ is
    called \df{spectral} whenever the compact open subsets form a basis for the
    topology of $X$; equivalently, if the cone $(f:X\to\s)_f$ of all spectral
    maps into the the Sierpi\'nski space is initial with respect to the
    forgetful functor $\TOP\to\SET$; and this in turn is equivalent to
    $(f:X\to\s)_f$ being initial with respect to the canonical forgetful functor
    $\STCOMP\to\SET$. It is also well-known that a continuous map $f:X\to Y$
    between spectral spaces is spectral if and only if $f^{-1}(K)$ is compact,
    for every compact open subset $K\subseteq Y$. The full subcategory of
    $\STCOMP$ defined by all spectral spaces we denote by $\SPEC$; it is a
    reflective subcategory since by definition it is closed under initial cones
    (see \cite[Theorem 16.8]{AHS90}). Finally, we note that $\STCOMP$ is
    equivalent to the category of Nachbin's partially ordered compact Hausdorff
    spaces and monotone continuous maps. Here a stably compact space $X$
    corresponds to the partially ordered compact Hausdorff space with the same
    underlying set, the order relation is the specialisation order, and the
    compact Hausdorff topology is given by the so-called patch topology (see
    \cite{Nac65,Jun04} for details).
  \end{enumerate}
\end{definition}

\begin{examples}\label{exs:1}
  The following monads are of Kock-Z\"oberlein type.
  \begin{enumerate}
  \item The category $\POSS$ of partially ordered sets and monotone maps is
    order-enriched, with the pointwise order of monotone maps. The downset monad
    $\mD=\dmonad$ on $\POSS$ is given by
    \begin{itemize}
    \item the downset functor $D:\POSS\to\POSS$ which sends an ordered set $X$
      to the set $DX$ of downclosed subsets of $X$ ordered by inclusion, and,
      for $f:X\to Y$ monotone, $Df:DX\to DY$ sends a downclosed subset $A$ of
      $X$ to the downclosure of $f(A)$;
    \item the unit $e_X:X\to DX$ sends $x\in X$ to the downclosure
      $\downarrow x$ of $x$; and
    \item the multiplication $m_X:DDX\to DX$ sends a downset of downsets to its
      union.
    \end{itemize}
    The category $\POSS^\mD$ of Eilenberg--Moore algebras and homomorphisms is
    equivalent to the category $\SUP$ of complete partially ordered sets and
    sup-preserving maps.
  \item An interesting submonad of $\mD=\dmonad$ is given by the monad
    $\mI=\imonad$ where $IX$ is the set of directed downclosed subset of $X$,
    ordered by inclusion. Furthermore, $\POSS^\mI$ is equivalent to the category
    $\DSUP$ of partially ordered sets with directed suprema and maps preserving
    directed suprema.
  \item We denote the category of topological T$_0$-spaces and continuous maps
    by $\TOP_0$. The topology of a T$_0$-space $X$ induces the specialisation
    order on the set $X$: for $x,x'\in X$,
    $x \le x' \iff \goo(x)\subseteq\goo(x')$, where $\goo(x)$ denotes the set of
    open sets. Every continuous map preserves this order, and, thus, also its
    dual. We consider $\TOP_0$ as an order-enriched category by taking the dual
    of the specialisation order pointwisely on hom-sets.

    The filter functor $F:\TOP_0\to\TOP_0$ sends a topological space $X$ to the
    space $FX$ of all filters on the lattice $\goo X$ of open subsets of
    $X$. The topology on $FX$ is generated by the sets
    \begin{align*}
      A^\#&=\{\ff\in FX\mid A\in\ff\}
    \end{align*}
    where $A\subseteq X$ is open. For a continuous map $f:X\to Y$, the map
    $Ff:FX\to FY$ is defined by
    \[
      \ff\mapsto\{B\in \Omega Y\mid f^{-1}(B)\in\ff\},
    \]
    for $\ff\in FX$. Since $(Ff)^{-1}(B^\#)=(f^{-1}(B))^\#$ for every
    $B\subseteq Y$ open, $Ff$ is continuous. The filter functor is part of the
    filter monad $\mF=\fmonad$ on $\TOP_0$, here the unit $e_X:X\to FX$ sends
    $x\in X$ to its neighbourhood filter $\goo (x)$, and the multiplication
    $m_X:FFX\to FX$ sends $\fF\in FFX$ to the filter
    $\{A\subseteq X\mid A^\#\in\fF\}$. The category $\TOP_0^\mF$ of
    Eilenberg--Moore algebras for the filter monad is equivalent to the category
    $\CONTLAT$ of continuous lattices and maps preserving directed suprema and
    arbitrary infima (see \cite{Day75,Wyl81}). Here a continuous lattice is
    viewed as a topological space with the Scott topology, and the algebra
    structure $\alpha:FX\to X$ picks for every $\ff\in FX$ the largest
    convergence point with respect to the specialisation order.
  \item\label{item:StablyComp} In this paper we will consider several submonads
    of the filter monad $\mF$ on $\TOP_0$; in particular, the proper filter
    monad $\mFO=\fOmonad$ where $F_1X$ is the subspace of $FX$ consisting of all
    proper filters, and the prime filter monad $\mFp=\fpmonad$ where $F_2X$ is
    the subspace of $FX$ consisting of all prime filters.  Indeed, we have a
    chain of Kock-Z\"oberlein submonads $\mF_n$ of $\mF$, for $n$ a regular
    cardinal, where $F_nX$ is the subspace of $FX$ of all $n$-prime filters;
    that is, filters with the property that, for each union of an $n$-indexed
    family of open sets belonging to the filter, some member of the family
    belongs to the filter too. The union of this chain is the completely prime
    filter Kock-Z\"oberlein monad $\mFc=\fcmonad$ where $F_cX$ is the subspace
    of $FX$ consisting of all completely prime filters (see \cite{CS15}). In the
    latter case, the category $\TOP_0^{\mFc}$ is equivalent to the category of
    sober spaces and continuous maps (see \cite{EF99}). It is shown in
    \cite{Sim82} that the category $\TOP_0^{\mFp}$ is equivalent to the category
    $\STCOMP$ of stably compact spaces and spectral maps. Moreover,
    $\TOP_0^{\mFO}$ is equivalent to the category of bounded complete domains
    (also known as continuous Scott domains) and maps preserving directed
    suprema and non-empty infima (see \cite{Wyl85,EF99}).
  \end{enumerate}
\end{examples}

The notion of Kock-Z\"oberlein monad generalises the one of idempotent monad; we
recall that a monad $\mT=\monad$ on a category $\catX$ is idempotent whenever
$m:TT\to T$ is an isomorphism. By Theorem~\ref{thm:char-KZ-monad}, $\mT$ is
idempotent if and only if $\mT$ is of Kock-Z\"oberlein type with respect to the
discrete order on the hom-sets of $\catX$; i.e.\ if $Te_X=e_{TX}$. This
observation motivates the designation \emph{lax idempotent monad} for this type
of monads, which is also used in the literature. Furthermore, we recall that
\vspace{1em}
\begin{enumerate}
\item For every adjunction $\smash{\catA\adjunct{F}{G}\catX}$, $G$ is fully faithful if
  and only if the counit $\eps:FG\to\Id$ is an isomorphism.
\item Every fully faithful and right adjoint functor $G:\catA\to\catX$ is
  monadic, and the induced monad is idempotent.
\item For every monad $\mT$ on $\catX$, $G^\mT:\catX^\mT\to\catX$ is full if and
  only if $\mT$ is idempotent.
\end{enumerate}
We also remark that the completely prime filter monad $\mFc=\fcmonad$ on
$\TOP_0$ is actually idempotent.

For an order-enriched monad $\mT=\monad$ on an order-enriched category $\catX$,
we put
\[
  \calM_\mT=\{h:X\to Y\text{ in }\catX\mid \text{$Th$ has a right adjoint
    $g:TY\to TX$ satisfying }g\cdot Th=\id_{TX}\}.
\]
Clearly, if $\catX$ is locally discrete, then $\calM_\mT$ is the class of all
morphisms $h:X\to Y$ where $Th$ is an isomorphism. Equivalently, $\calM_\mT$ is
the largest class of morphisms of $\catX$ with respect to which the subcategory
$\catX^{\mT}$ is orthogonal. The concept of orthogonality is the
particularisation, to the locally discrete case, of the concept of
\emph{Kan-injectivity}. We recall that an object $A$ is left Kan-injective with
respect to a morphism $h:X\to Y$, if and only if the hom-map
$\,\catX(h,A):\catX(Y,A)\ra \catX(X,A)\,$ is a right adjoint retraction in the
category $\POSS$. And a morphism $f:A\ra B$ is Kan-injective with respect to $h$
if $A$ and $B$ are so and the left adjoint maps $(\catX(h,A))^{\star}$ and
$(\catX(h,B))^{\star}$ satisfy the equality
$\catX(X,f)\cdot (\catX(h,A))^{\star}=(\catX(h,B))^{\star}\cdot \catX(Y,f)$.

Next we recall a characterisation of Eilenberg--Moore algebras of
Kock-Z\"oberlein monads in terms of injectivity.

\begin{theorem}\label{thm:algebra_vs_injective}
  Let $A$ be in $\catX$ and $\mT$ be a Kock-Z\"oberlein monad on $\catX$. Then
  the following assertions are equivalent.
  \begin{tfae}
  \item $A$ is injective with respect to
    $\{e_X:X\to TX\mid X\text{ in }\catX\}$.
  \item $A$ is a $\mT$-algebra.
  \item $A$ is injective with respect to $\calM_\mT$.
  \item $A$ is Kan-injective with respect to $\calM_\mT$.
  \end{tfae}
\end{theorem}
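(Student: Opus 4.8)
The plan is to establish the cycle of implications (ii) $\Rightarrow$ (iv) $\Rightarrow$ (iii) $\Rightarrow$ (i) $\Rightarrow$ (ii), using Theorem~\ref{d:thm:1} as the main bridge between algebra structures and adjunctions. First I would prove (i) $\Rightarrow$ (ii): if $A$ is injective with respect to every unit $e_X:X\to TX$, then in particular it is injective with respect to $e_A:A\to TA$, so the identity $\id_A:A\to A$ extends along $e_A$ to some $\alpha:TA\to A$ with $\alpha\cdot e_A=\id_A$. By the equivalence (i) $\iff$ (ii) of Theorem~\ref{d:thm:1}, this already makes $A$ a $\mT$-algebra; note that here I only need injectivity (existence of a diagonal), not Kan-injectivity.

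For (ii) $\Rightarrow$ (iv), suppose $A$ carries an algebra structure $\alpha:TA\to A$, so by Theorem~\ref{d:thm:1} we have $\alpha\dashv e_A$ and $\alpha\cdot e_A=\id_A$. Given $h:X\to Y$ in $\calM_\mT$, with $g:TY\to TX$ the right adjoint of $Th$ satisfying $g\cdot Th=\id_{TX}$, I would exhibit the required adjoint retraction between $\catX(Y,A)$ and $\catX(X,A)$. Precomposition $\catX(h,A)$ sends $v\colon Y\to A$ to $v\cdot h$; I claim its left adjoint sends $u\colon X\to A$ to $\alpha\cdot Tu\cdot g\cdot e_Y$ (or equivalently uses the algebra structure to push $u$ forward along $g$). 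Monotonicity of all the pieces — composition is order-preserving on both sides, $T$ is order-enriched, and the data of $\calM_\mT$ come with a genuine 2-categorical adjunction — gives the adjunction $(\catX(h,A))^\star \dashv \catX(h,A)$ in $\POSS$; and $g\cdot Th=\id_{TX}$ together with $\alpha\cdot e_X=\id_X$ forces the composite $\catX(h,A)\cdot(\catX(h,A))^\star$ to be the identity, so we have an adjoint retraction, i.e.\ $A$ is left Kan-injective with respect to $h$. The implication (iv) $\Rightarrow$ (iii) is immediate, since Kan-injectivity with respect to $h$ entails in particular that $\catX(h,A)$ is a (split) epimorphism, hence $A$ is injective with respect to $h$; doing this for all $h\in\calM_\mT$ gives (iii).

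Finally, for (iii) $\Rightarrow$ (i) I would show that each unit $e_X:X\to TX$ lies in $\calM_\mT$, so that injectivity with respect to $\calM_\mT$ specialises to injectivity with respect to the units. Applying $T$ to $e_X$ gives $Te_X:TX\to TTX$, and by the equivalence (i) $\iff$ (iii) of Theorem~\ref{thm:char-KZ-monad} we have $Te_X\dashv m_X$; moreover $m_X\cdot Te_X=\id_{TX}$ is one of the monad laws. Thus $m_X$ is a right adjoint to $Te_X$ with $m_X\cdot Te_X=\id_{TX}$, which is exactly the condition for $e_X\in\calM_\mT$. This closes the cycle. The step I expect to require the most care is (ii) $\Rightarrow$ (iv): one must check not merely that a diagonal exists but that the induced map $\catX(Y,A)\to\catX(X,A)$ is genuinely a right adjoint \emph{retraction} and, for the morphism part of Kan-injectivity, that the chosen left adjoints are natural in $A$ — this last point is where Remark~\ref{d:rem:1} (every left adjoint between algebras is a homomorphism) and the uniqueness of adjoints are the essential tools.
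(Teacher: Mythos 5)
The paper does not prove this theorem -- it is recalled from the literature (Escard\'o, Escard\'o--Flagg, Kock) without proof -- so there is no in-paper argument to compare against; your cycle (i) $\Rightarrow$ (ii) $\Rightarrow$ (iv) $\Rightarrow$ (iii) $\Rightarrow$ (i) is the standard one and is correct. In particular your candidate left adjoint $u\mapsto\alpha\cdot Tu\cdot g\cdot e_Y$ checks out: naturality of $e$ together with $g\cdot Th=\id_{TX}$ and $\alpha\cdot e_A=\id_A$ gives $(\alpha\cdot Tu\cdot g\cdot e_Y)\cdot h=u$ on the nose (the retraction), while $Th\cdot g\le\id_{TY}$ gives the counit inequality; and since the theorem concerns a single object $A$, only the object part of Kan-injectivity is needed, so the naturality-in-$A$ concern you raise at the end is not actually required here.
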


Moreover, as shown in \cite{CS11}, $\calM_\mT$ is the largest class of morphisms
of $\catX$ with respect to which the subcategory $\catX^{\mT}$ is
Kan-injective. For a detailed study on Kan-injectivity, see also \cite{ASV15}.

If $\mT$ is of Kock-Z\"oberlein type, then, by Theorem~\ref{thm:char-KZ-monad},
$e_X:X\to TX$ belongs to $\calM_\mT$, for all objects $X$ in $\catX$. However,
in contrast to the idempotent case, the following example shows that this
property does not characterise Kock-Z\"oberlein monads.

\begin{example}
  Let $\catX$ be the order-enriched category of all complete partially ordered
  sets and all monotone maps, ordered pointwise; and let $\catA$ be the
  subcategory of $\catX$ with the same objects, and as morphisms those morphisms
  of $\catX$ which preserve the top and the bottom element. The inclusion
  functor $\catA\hookrightarrow\catX$ is right adjoint: for each object $X$ of
  $\catX$, the reflection map
  \[
    \eta_X:X\to FX=\{\bot\}+X+\{\top\}
  \]
  is given by freely adjoining a largest and a smallest element to
  $X$. Furthermore, $F\eta_X:FX\to FFX$ sends the bottom element of $FX$ to the
  bottom element of $FFX$, the top element of $FX$ to the top element of $FFX$,
  and $x\in X$ to itself. Since $X$ has a largest element, every supremum in
  $FX$ of elements of $X$ is in $X$, therefore $F\eta_X$ preserves all suprema
  and consequently has a right adjoint $g:FFX\to FX$ in $\catX$. Moreover, since
  $F\eta_X:FX\to FFX$ is an order-embedding, we obtain
  $g\cdot F\eta_X=\id_{FX}$. Since $\eta_{FX}$ neither preserves the top nor the
  bottom element, we get $F\eta_X\nleq\eta_{FX}$ and $\eta_{FX}\nleq F\eta_X$;
  in particular, the induced monad is not of Kock-Z\"oberlein type, neither for
  the order $\le$ nor for its dual.
\end{example}

\begin{remark}\label{rem:1}
  For every $h:X\to Y$ in $\catX$, a right adjoint $g:TY\to TX$ of $Th$ is
  necessarily a $\mT$-algebra homomorphism. To see this, just observe that the
  diagram
  \[
    \xymatrix{ TTX\ar[r]^{TTh} & TTY\\ TX\ar[u]^{Te_X}\ar[r]_{Th} &
      TY\ar[u]_{Te_Y}}
  \]
  commutes, therefore the diagram of the corresponding right adjoints
  $Th\dashv g$, $TTh\dashv Tg$, $Te_X\dashv m_X$ and $Te_Y\dashv m_Y$ commutes
  as well. We also recall that, for an adjunction $f\dashv g$ in an
  order-enriched category, the inequalities $\id\le gf$ and $fg\le\id$ imply
  $fgf=f$; hence, if $f$ is a monomorphism, then $gf=\id$. Consequently,
  \begin{align*}\calM_\mT&=\{h:X\to Y\text{ in }\catX\mid \text{$Th$
                           is a left adjoint
                           monomorphism in $\catX$}\}\\
                         &=\{h:X\to Y\text{ in }\catX\mid \text{$Th$  is a left adjoint
                           monomorphism in $\catX^\mT$}\}.
  \end{align*}
\end{remark}

In the sequel we call
\begin{itemize}
\item a morphism $h:X\to Y$ in $\catX$ \df{order-mono} whenever, for all
  $f,g:A\to X$ in $\catX$, $h\cdot f\le h\cdot g$ implies $f\le g$.
\item a morphism $h:X\to Y$ in $\catX$ \df{order-epi} whenever, for all
  $f,g:Y\to B$ in $\catX$, $f\cdot h\le g\cdot h$ implies $f\le g$.
\item a functor $T:\catX\to\catX$ \df{order-faithful} whenever, for all
  $f,g:A\to X$ in $\catX$, $Tf\le Tg$ implies $f\le g$.
\end{itemize}

We denote the class of all order-monos of $\catX$ by $\omono(\catX)$, and the
class of all order-epis by $\oepi(\catX)$. Clearly, if $\catX$ is order-enriched
with the discrete order, then the notions above coincide with mono, epi and
faithful, respectively. Furthermore, order-mono implies mono, order-epi implies
epi and order-faithful implies faithful. The following result is a particular
case of \cite[Proposition~4.1.4]{BF06}.

\begin{proposition}\label{d:prop:1}
  Let $\mT=\monad$ be an order-enriched monad on $\catX$. Then the following
  assertions are equivalent.
  \begin{tfae}
  \item For every object $X$ in $\catX$, $e_X$ is order-mono.
  \item $T$ is order-faithful.
  \end{tfae}
  Moreover, for a Kock-Z\"oberlein monad $\mT$, the two assertions above are
  also equivalent to $\calM_\mT\subseteq\omono(\catX)$.
\end{proposition}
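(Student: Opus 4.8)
The plan is to prove the two equivalences separately, using the characterisations already recorded. For (i) $\Rightarrow$ (ii): assume every unit $e_X$ is order-mono. To show $T$ is order-faithful, take $f,g\colon A\to X$ with $Tf\le Tg$; I would apply $T$ once more, or rather use naturality of $e$, to write $e_{TX}\cdot f$ versus $e_{TX}\cdot g$? No---more directly, naturality of $e$ gives $Tf\cdot e_A = e_X\cdot f$ and $Tg\cdot e_A = e_X\cdot g$. But $Tf\le Tg$ yields $Tf\cdot e_A\le Tg\cdot e_A$ since composition preserves order on the right, hence $e_X\cdot f\le e_X\cdot g$, and now (i) applied at $X$ gives $f\le g$. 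For (ii) $\Rightarrow$ (i): if $T$ is order-faithful and $e_X\cdot f\le e_X\cdot g$ for $f,g\colon A\to X$, then applying $T$ and using $Te_X\le e_{TX}$ is not needed here; instead, from $e_X\cdot f\le e_X\cdot g$ apply $T$ to get $Te_X\cdot Tf\le Te_X\cdot Tg$; then compose with $m_X$ on the left (which preserves order) and use $m_X\cdot Te_X=\id_{TX}$ to conclude $Tf\le Tg$, whence $f\le g$ by order-faithfulness. This last direction does use a monad identity but not the Kock--Z\"oberlein hypothesis, which is consistent with the proposition asserting (i) $\Leftrightarrow$ (ii) for any order-enriched monad.

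For the final clause, assume $\mT$ is of Kock--Z\"oberlein type; I must show the equivalent conditions above are also equivalent to $\calM_\mT\subseteq\omono(\catX)$. For the implication from $\calM_\mT\subseteq\omono(\catX)$ to (i): by Theorem~\ref{thm:char-KZ-monad} and the remark following Theorem~\ref{thm:algebra_vs_injective}, each $e_X$ lies in $\calM_\mT$, so $e_X\in\omono(\catX)$, which is precisely (i). For the converse, suppose (ii) holds, i.e.\ $T$ is order-faithful, and let $h\colon X\to Y$ be in $\calM_\mT$; by Remark~\ref{rem:1}, $Th$ is a left adjoint monomorphism in $\catX$, say $Th\dashv g$ with $g\cdot Th=\id_{TX}$. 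I want to show $h$ is order-mono. Given $f_1,f_2\colon A\to X$ with $h\cdot f_1\le h\cdot f_2$, apply $T$ to get $Th\cdot Tf_1\le Th\cdot Tf_2$, then compose on the left with $g$ (order-preserving) and use $g\cdot Th=\id_{TX}$ to obtain $Tf_1\le Tf_2$; now order-faithfulness of $T$ gives $f_1\le f_2$, so $h\in\omono(\catX)$.

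I expect the only delicate point to be making sure the Kock--Z\"oberlein hypothesis is invoked exactly where it is needed and nowhere else: it enters solely through the fact that $e_X\in\calM_\mT$ (via Theorem~\ref{thm:char-KZ-monad}, which supplies the adjunction $m_X\dashv e_{TX}$, equivalently $Te_X\dashv m_X$, giving the required retraction $m_X\cdot Te_X=\id_{TX}$), whereas the equivalence (i) $\Leftrightarrow$ (ii) and the reverse implication for the final clause are purely formal consequences of order-enrichment and the monad axioms. Everything else is a short diagram-and-order chase; since the proposition is cited as a particular case of \cite[Proposition~4.1.4]{BF06}, I would keep the write-up brief, essentially recording the three implications above.
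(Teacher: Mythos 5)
Your proposal is correct and follows essentially the same route as the paper: the paper's one-line proof rests on exactly the two facts you isolate, namely that $e_X\cdot f\le e_X\cdot g$ forces $Tf\le Tg$ via $m_X\cdot Te_X=\id_{TX}$, and that each $e_X$ lies in $\calM_\mT$ for a Kock--Z\"oberlein monad. Your write-up merely expands the paper's ``immediate'' into the explicit order chases, including the (ii) $\Rightarrow$ $\calM_\mT\subseteq\omono(\catX)$ direction via the retraction $g\cdot Th=\id_{TX}$, all of which is sound.
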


\begin{proof}
  It is immediate, taking into account that, for an order-enriched monad
  $\mT=\monad$, the inequality $Tf\cdot e_X\leq Tg\cdot e_X$ implies $f\leq g$,
  and that the maps $e_X$ belong to $\calM_\mT$.
\end{proof}

\begin{remark}
  For all monads $\mT=\monad$ of Examples~\ref{exs:1}, the functor $T$ is
  order-faithful.
\end{remark}

\section{Abstract algebraic objects}
\label{sec:abstr-algebr-objects}

The role model of this section is the theory of completely distributive and of
totally algebraic lattices in the spirit of \cite{RW94,RW04}. We recall that,
for $\mT=\mD$ being the downset monad on $\POSS$, a partially ordered set $Y$ is
isomorphic to some $DX$ if and only if $Y$ is totally algebraic (see
Definition~\ref{def:domains}~(\ref{item:algebraic})). Analogously, trading the
downset monad for the directed downset monad $\mT=\mI$, a partially ordered set
$Y$ is isomorphic to some $IX$ if and only if $Y$ is an algebraic domain. The
principal observation of this section is that these results are not particularly
about order theory but hold in more general for a Kock-Z\"oberlein monad on an
order-enriched category. To achieve this, an important tool is the equivalence
\cite{RW04} between the category of split algebras and the idempotent split
completion of the Kleisli category which allows us to move back and forth
between these categories.

Hence, in this section we consider a Kock-Z\"oberlein monad $\mT=\monad$ on an
order-enriched category $\catX$. Then the Kleisli category $\catX_\mT$ is
order-enriched as well. Denoting the morphisms of $\catX_\mT$ with arrows
$\modto$ and the composition between them with $\circ$, the canonical functor
from $\catX$ to $\catX_\mT$, given by
\[
  \catX\longto\catX_\mT,\,(f:X\to Y)\longmapsto (f_*=e_Y\cdot f:X\modto Y),
\]
is order-enriched; it is even locally an order embedding provided that $T$ is
order-faithful.  We note that, for arrows $r:A\modto X$ and $s:Y\modto B$ in
$\catX_\mT$ and $f:X\to Y$ in $\catX$,
\begin{align*}
  f_*\circ r=Tf\cdot r &&\text{and}&& s\circ f_*=s\cdot f.
\end{align*}

The following definition is motivated by \cite{Law73}.

\begin{definition}
  An object $Y$ in $\catX$ is called \df{Cauchy complete} whenever every left
  adjoint morphism $r:X\modto Y$ in $\catX_\mT$ is of the form $r=f_*$, for some
  $f:X\to Y$ in $\catX$.
\end{definition}

\begin{remark}
  Equivalently, $Y$ is Cauchy complete if and only if every left adjoint
  $g:TX\to TY$ in $\catX^\mT$ is of the form $g=Tf$, for some $f:X\to Y$ in
  $\catX$.
\end{remark}

\begin{examples}
  For the downset monad $\mD$ on $\POSS$, every partially ordered set $X$ is
  Cauchy complete. For each of the filter monads on $\TOP_0$, a T$_0$-space $X$
  is Cauchy complete if and only if $X$ is sober.
\end{examples}

\begin{theorem}\label{thm:algebra_implies_Cauchy}
  Every $\mT$-algebra $Y$ is Cauchy complete. Moreover, if $T$ is
  order-faithful, an object $Y$ of $\catX$ is a $\mT$-algebra if and only if $Y$
  is Cauchy-complete and $Te_Y$ has a left adjoint in $\catX$.
\end{theorem}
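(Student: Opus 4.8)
My plan is to prove the two statements in order. For the first statement, I would start from the characterisation of $\mT$-algebras in Theorem~\ref{d:thm:1}: if $Y$ is a $\mT$-algebra with structure $\alpha:TY\to Y$, then $\alpha\dashv e_Y$. Given a left adjoint $g:TX\to TY$ in $\catX^\mT$ with $g\dashv g'$, I would set $f=\alpha\cdot g\cdot e_X:X\to Y$ and try to show $g=Tf$. The idea is to compute $Tf$ using functoriality and the algebra identities, exploiting that $g$ is a homomorphism (so $g\cdot Te_X$ and the algebra structures interact well) together with the adjunction $\alpha\dashv e_Y$ to collapse the extra $T$. Concretely, $Tf = T\alpha\cdot Tg\cdot Te_X$, and since $g$ is a homomorphism the square relating $Tg$, $g$, and the algebra structures on $TX$, $TY$ commutes; combined with $\alpha$ being the algebra structure one should recover $g$ by the uniqueness of adjoints (Theorem~\ref{thm:char-KZ-monad} gives $Te_X\dashv m_X$, and $m_X$ is the free algebra structure). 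The cleanest route is probably via the Remark preceding the theorem: $g$ left adjoint in $\catX^\mT$ comes from some $r:X\modto Y$ left adjoint in $\catX_\mT$, and Cauchy completeness is exactly the statement that $r=f_*$; so I would argue that a left adjoint $r:X\modto Y$ in $\catX_\mT$, when $Y$ carries an algebra structure $\alpha$, must equal $(\alpha\cdot r)_*$ where $\alpha\cdot r:X\to Y$ is the "transpose". Verifying $r = e_Y\cdot(\alpha\cdot r)$ uses $\alpha\cdot e_Y=\id_Y$ in one direction and, in the other, the order-theoretic fact (recalled in Remark~\ref{rem:1}) that a left adjoint composed appropriately is pinned down; here the hypothesis that $r$ is a \emph{left adjoint} in $\catX_\mT$ is what forces equality rather than mere inequality.

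For the second statement, one direction is the first statement together with the observation that if $Y$ is a $\mT$-algebra then $Te_Y$ has a left adjoint in $\catX$ — indeed by Theorem~\ref{thm:char-KZ-monad} $Te_Y\dashv m_Y$, and $m_Y$ lives in $\catX$. So the content is the converse: assume $T$ order-faithful, $Y$ Cauchy complete, and $Te_Y:TY\to TTY$ has a left adjoint $\ell:TTY\to TY$ in $\catX$. I want to produce an algebra structure $\alpha:TY\to Y$, equivalently (by Theorem~\ref{d:thm:1}) a morphism $\alpha$ with $\alpha\cdot e_Y=\id_Y$, equivalently $\alpha\dashv e_Y$. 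The natural candidate is obtained by noting that $\ell$ is a left adjoint $TTY\to TY$, i.e.\ (after identifying $TTY$ with $T(TY)$) it witnesses that the identity-related map at the object $TY$ has a left adjoint; applying Cauchy completeness of $Y$ to a suitable left adjoint morphism $TY\modto Y$ in $\catX_\mT$ should yield an actual map $\alpha:TY\to Y$ in $\catX$. More precisely: from $\ell\dashv Te_Y$ in $\catX$, and using that $Te_Y$ is order-mono (here is where order-faithfulness enters, via Proposition~\ref{d:prop:1}), one gets $\ell\cdot Te_Y=\id_{TY}$; this says $\ell$, viewed as a Kleisli morphism $TY\modto Y$, is a left adjoint retraction, hence by Cauchy completeness equals $f_*$ for some $f:TY\to Y$, and this $f$ is the desired $\alpha$. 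I would then check $\alpha\cdot e_Y=\id_Y$ by translating the Kleisli identity $\ell\cdot Te_Y=\id_{TY}$ back through the formulas $f_*\circ r = Tf\cdot r$ and $s\circ f_*=s\cdot f$.

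The main obstacle, I expect, is bookkeeping the translation between the three settings — $\mathrm{Kleisli}$ morphisms with $\circ$, homomorphisms in $\catX^\mT$, and plain morphisms of $\catX$ — and in particular being careful about \emph{which} left adjoint is assumed where: the hypothesis "$Te_Y$ has a left adjoint in $\catX$" must be parlayed, using order-faithfulness, into "$Te_Y$ has a left adjoint that is also a \emph{retraction} (split mono)", since only then does it name a left adjoint \emph{retraction} $r:TY\modto Y$ in $\catX_\mT$ to which Cauchy completeness applies and produces the structure map. I would isolate as a small lemma the fact that in an order-enriched category a left adjoint to an order-mono is automatically a retraction (a consequence of $fgf=f$ from Remark~\ref{rem:1}), and use it twice. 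Everything else is diagram-chasing with the adjunctions $\alpha\dashv e_Y$, $Te_Y\dashv m_Y$ and the naturality of $e$ and $m$.
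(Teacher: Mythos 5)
Your overall strategy does coincide with the paper's: for the first claim, set $f=\alpha\cdot r$ and identify $f_*$ with $r$ by showing both are left adjoints of the given right adjoint $s$; for the ``moreover'', transport the left adjoint of $Te_Y$ to a left adjoint $r:TY\modto Y$ of $(e_Y)_*$ in $\catX_\mT$, apply Cauchy completeness to get $r=\alpha_*$, and use order-faithfulness to recover $\alpha\dashv e_Y$. However, one step is wrong as written: your justification that an algebra structure on $Y$ makes $Te_Y$ a morphism \emph{with} a left adjoint appeals to Theorem~\ref{thm:char-KZ-monad}, but $Te_Y\dashv m_Y$ exhibits $m_Y$ as a \emph{right} adjoint of $Te_Y$, which is not what is needed. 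The correct (and easy) argument is the paper's: $T$ is order-enriched, hence preserves adjunctions, so $\alpha\dashv e_Y$ gives $T\alpha\dashv Te_Y$.

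Second, in the first part the step you describe as ``the hypothesis that $r$ is a left adjoint is what forces equality'' is exactly where the work lies, and your sketch gives no indication of how it goes. Having $f_*\ge r$ (from $\id_{TY}\le e_Y\cdot\alpha$), one must still prove $f_*\dashv s$, i.e.\ $f_*\circ s\le e_Y$. The paper's computation
\[
  f_*\circ s \;=\; T\alpha\cdot Tr\cdot s \;\le\; T\alpha\cdot e_{TY}\cdot m_Y\cdot Tr\cdot s \;\le\; T\alpha\cdot e_{TY}\cdot e_Y \;=\; T\alpha\cdot Te_Y\cdot e_Y \;=\; e_Y
\]
uses the counit $r\circ s\le e_Y$ together with the Kock--Z\"oberlein inequality $\id\le e_{TY}\cdot m_Y$; your outline never invokes the KZ property at this point, yet it is indispensable here, so the plan is incomplete at its crux. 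Two smaller repairs: the Kleisli morphism $TY\modto Y$ corresponding to $\ell:TTY\to TY$ is $\ell\cdot e_{TY}$, not $\ell$ itself (the domains do not match); and Cauchy completeness applies to \emph{every} left adjoint in $\catX_\mT$, so the ``retraction'' refinement you build in is unnecessary --- moreover $Te_Y$ is already order-mono because it is split by $m_Y$, independently of order-faithfulness, which is instead needed at the very end to pass from $\alpha_*\dashv(e_Y)_*$ (or from $e_Y\cdot\alpha\cdot e_Y=e_Y$) to $\alpha\dashv e_Y$.
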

\begin{proof}
  Assume first that $Y$ is a $\mT$-algebra, with left adjoint $\beta:TY\to Y$ of
  $e_Y:Y\to TY$. Since $T$ is order-enriched, also $T\beta\dashv Te_Y$, hence
  $Te_Y$ has a left adjoint. Let $s:Y\modto X$ be the right adjoint of
  $r:X\modto Y$ in $\catX_\mT$. Hence,
  \begin{align*}
    e_X\le s\circ r=m_X\cdot Ts\cdot r &&\text{and}&& e_Y\ge r\circ s=m_Y\cdot Tr\cdot s.
  \end{align*}
  We put $f=\beta\cdot r$, then $f_*=e_Y\cdot\beta\cdot r\ge r$. In fact,
  $f_*\dashv s$ in $\catX_\mT$ since $s\circ f_*\ge s\circ r\ge e_X$ and
  \[
    f_*\circ s=T\beta\cdot Tr\cdot s \le T\beta\cdot e_{TY}\cdot m_Y\cdot
    Tr\cdot s\le T\beta\cdot e_{TY}\cdot e_Y=T\beta\cdot Te_{Y}\cdot e_Y=e_Y;
  \]
  and therefore $r=f_*$.

  Assume now that $T$ is order-faithful and let $Y$ be a Cauchy-complete
  $\catX$-object so that $Te_Y$ has a left adjoint in $\catX$. Then, since
  $Te_Y:TY\to TTY$ corresponds to $(e_Y)_*:Y\modto TY$ in $\catX_\mT$, $(e_Y)_*$
  has a left adjoint $r:TY\modto Y$ in $\catX_\mT$ (see also
  Remark~\ref{d:rem:1}). Since $Y$ is Cauchy complete, $r=\beta_*$ for some
  $\beta:TY\to Y$. Finally, $(-)_*:\catX\to\catX_\mT$ is locally an
  order-faithful by hypothesis, therefore $\beta\dashv e_Y$.
\end{proof}

\begin{corollary}
  Let $\mT=\monad$ be an idempotent monad on a category $\catX$ where $T$ is
  faithful. Then an object $Y$ of $\catX$ is a $\mT$-algebra if and only if $Y$
  is Cauchy complete.
\end{corollary}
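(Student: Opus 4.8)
The plan is to reduce the statement to Theorem~\ref{thm:algebra_implies_Cauchy} by regarding the ordinary category $\catX$ as order-enriched with the discrete order on every hom-set. First I would note that, under this enrichment, every functor is automatically order-enriched, so $\mT=\monad$ becomes an order-enriched monad; moreover, as already recalled in the text before Proposition~\ref{d:prop:1}, $\mT$ is idempotent precisely when it is of Kock-Z\"oberlein type for the discrete order, i.e.\ when $Te_X=e_{TX}$. Likewise, ``$T$ is order-faithful'' for the discrete order means literally ``$T$ is faithful''. Hence the hypotheses of the corollary say exactly that $\mT$ is a Kock-Z\"oberlein monad on the (discretely) order-enriched category $\catX$ and that $T$ is order-faithful, so Theorem~\ref{thm:algebra_implies_Cauchy} applies.

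That theorem then gives: $Y$ is a $\mT$-algebra if and only if $Y$ is Cauchy complete and $Te_Y$ has a left adjoint in $\catX$. The final step is to observe that the extra condition is vacuous in the idempotent case. Since $m$ is a natural isomorphism, the monad identity $m_Y\cdot Te_Y=\id_{TY}$ forces $Te_Y=m_Y^{-1}$, so $Te_Y$ is an isomorphism and in particular has a left adjoint (namely $m_Y$). Thus the two conditions collapse to Cauchy completeness of $Y$, which is the claim. (Alternatively, $Te_Y\dashv m_Y$ by Theorem~\ref{thm:char-KZ-monad}, and an invertible right adjoint is automatically also a left adjoint.)

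I do not expect any real obstacle here; the only point requiring a little care is that ``Cauchy complete'' in the corollary is to be read with respect to this discrete order-enrichment, but the notion only concerns ordinary adjunctions in $\catX^\mT$ (adjunctions for the discrete order being just ordinary adjunctions) and whether the relevant left adjoint lies in the image of $T$, so nothing changes. One could also avoid citing Theorem~\ref{thm:algebra_implies_Cauchy} altogether: the forward implication is the first half of the proof of that theorem specialised to the discrete order, and the converse uses that $(e_Y)_*:Y\modto TY$ has a left adjoint $r$ in $\catX_\mT$ (because $Te_Y$ is invertible), whence Cauchy completeness yields $\beta:TY\to Y$ with $\beta_*=r$, and then $\beta\dashv e_Y$ by order-faithfulness (here, faithfulness) of $(-)_*$, so Theorem~\ref{d:thm:1} produces the algebra structure. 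I would, however, simply invoke the theorem.
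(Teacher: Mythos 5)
Your proposal is correct and follows exactly the route the paper intends: the corollary is stated without proof as an immediate specialisation of Theorem~\ref{thm:algebra_implies_Cauchy} to the discrete order-enrichment, where idempotent means Kock-Z\"oberlein, faithful means order-faithful, and the condition that $Te_Y$ have a left adjoint is vacuous because $Te_Y=m_Y^{-1}$ is invertible.
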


We recall now the general notion of a split algebra for a monad as used in
\cite{RW04}.

\begin{definition}
  A $\mT$-algebra $X$ is called \df{split} whenever the left adjoint
  $\alpha:TX\to X$ of $e_X:X\to TX$ has a left adjoint $t:X\to TX$ in $\catX$;
  and $X$ is called \df{algebraic} whenever $X$ is isomorphic to a free algebra
  in $\catX^\mT$.
\end{definition}

\begin{examples}\label{exs:2}
  A partially ordered set $X$ is a split algebra for the downset monad if and
  only if $X$ is completely distributive, in this case the map $t:X\to DX$ sends
  $x\in X$ to the set $\{y\in X\mid y\lll x\}$ of all elements $y\in Y$ which
  are totally below $x$ (see \cite{Ran52,FW90}). Similarly, a directed
  cocomplete partially ordered set $X$ is a split algebra for $\mI=\imonad$ if
  and only if $X$ is a domain (see Definition \ref{def:domains}(4)); in this
  case the splitting $t:X\to IX$ is given by $x\mapsto\{y\in X\mid y\ll
  x\}$. Regarding the filter monad $\mF=\fmonad$ on $\TOP_0$, a continuous
  lattice $X$ (equipped with the Scott topology) is a split algebra for $\mF$ if
  and only if $X$ is $\mF$-disconnected in the sense of \cite{Hof13a}. Here,
  with $\alpha:FX\to X$ denoting the algebra structure of $X$, for an open
  subset $A\subseteq X$ we put
  $\mu(A)=\{x\in X\mid \alpha(\ff)=x\text{ for some $\ff\in FX$ with
    $A\in\ff$}\}$. Then $X$ is $\mF$-disconnected precisely when $\mu(A)$ is
  open, for every open subset $A\subseteq X$; and in this case the map
  $t:X\to FX$ sends $x\in X$ to the filter
  $t(x)=\{A\subseteq X\mid A\text{ open, }x\in\mu(A)\}$. The case of the prime
  filter monad is similar, with $\mu(A)$ now defined using only prime
  filters. In terms of partially ordered compact Hausdorff spaces, every split
  algebra for $\mFp$ is a Priestley space, more precise, a Priestley space is a
  split algebra for $\mFp$ if and only if it is an f-space in the sense of
  \cite{PS88}.  In Section~\ref{sec:idemp-split-compl} we give a different
  characterisation of the split algebras for the filter monad, by means of the
  way below relation. In Examples~\ref{exs:3} we describe algebraic
  $\mT$-algebras.
\end{examples}

We denote the full subcategory of $\catX^\mT$ of all split $\mT$-algebras by
$\Spl(\catX^\mT)$. Since $\mT$ is of Kock-Z\"oberlein type, every free
$\mT$-algebra $TY$ (with algebra structure $m_Y:TTY\to TY$) is split since
$Te_Y\dashv m_Y\dashv e_{TY}$. Hence, every algebraic $\mT$-algebra is
split. Next we recall that the split $\mT$-algebras are precisely those algebras
where the algebra structure has a homomorphic splitting (see \cite{Koc95}).

\begin{proposition}
  Let $X$ be a $\mT$-algebra with $\alpha\dashv e_X$ in $\catX$ and let
  $t:X\to TX$ in $\catX$. Then $t\dashv\alpha$ in $\catX$ if and only if $t$ is
  a $\mT$-homomorphism with $\alpha\cdot t=\id_X$.
\end{proposition}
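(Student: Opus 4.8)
The plan is to establish the two implications separately, using the Kock-Z\"oberlein characterisations already available, in particular Theorem~\ref{d:thm:1} and the standard fact (recalled in Remark~\ref{d:rem:1}) that a left adjoint between $\mT$-algebras is automatically a homomorphism. First I would prove the forward direction: assume $t\dashv\alpha$ in $\catX$. Since $X$ is a $\mT$-algebra, so is $TX$ (with structure $m_X$), and $\alpha:TX\to X$ is a homomorphism; hence $t:X\to TX$, being a left adjoint between $\mT$-algebras, is a homomorphism by Remark~\ref{d:rem:1}. It remains to see $\alpha\cdot t=\id_X$. From the adjunction $t\dashv\alpha$ we have the counit inequality $t\cdot\alpha\le\id_{TX}$ and the unit inequality $\id_X\le\alpha\cdot t$. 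On the other hand, $\alpha\dashv e_X$ gives $e_X\cdot\alpha\le\id_{TX}$, i.e.\ $\alpha$ is order-epi onto its image in the appropriate sense; more directly, composing the counit $t\cdot\alpha\le\id_{TX}$ on the left with $\alpha$ and using monotonicity of composition yields $\alpha\cdot t\cdot\alpha\le\alpha$, and postcomposing $\id_X\le\alpha\cdot t$ with $\alpha$ gives $\alpha\le\alpha\cdot t\cdot\alpha$; so $\alpha\cdot t\cdot\alpha=\alpha$. Since $\alpha$ is a split epimorphism (it has the section $e_X$, as $\alpha\cdot e_X=\id_X$ by Theorem~\ref{d:thm:1}), it is in particular epi, and $\alpha\cdot t\cdot\alpha=\alpha=\id_X\cdot\alpha$ forces $\alpha\cdot t=\id_X$.

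For the converse, assume $t$ is a $\mT$-homomorphism with $\alpha\cdot t=\id_X$. I need $t\dashv\alpha$, i.e.\ the two inequalities $\id_X\le\alpha\cdot t$ and $t\cdot\alpha\le\id_{TX}$. The first is the hypothesis $\alpha\cdot t=\id_X$ (in fact an equality). For the second, the key is that $t$ being a homomorphism means the square $t\cdot\alpha=m_X\cdot Tt$ commutes (unfolding the definition of algebra homomorphism for the free algebra $TX$ with structure $m_X$). Now $\mT$ is of Kock-Z\"oberlein type, so $Te_X\le e_{TX}$, equivalently $Te_X\dashv m_X$ by Theorem~\ref{thm:char-KZ-monad}; the counit of this adjunction gives $m_X\cdot Te_X\le\id_{TX}$ (actually $=\id_{TX}$), and more usefully the unit gives $\id_{TTX}\le Te_X\cdot m_X$ — but what I really want is to bound $t\cdot\alpha$ above. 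Using $t\cdot\alpha=m_X\cdot Tt$ and applying the functor $T$ to the section equation $\alpha\cdot t=\id_X$ yields $T\alpha\cdot Tt=\id_{TX}$; meanwhile $Tt\le e_{TX}\cdot t$ need not hold, so instead I would argue as follows: since $t$ is a homomorphism $X\to TX$ and $X$ carries structure $\alpha$, we have $t\cdot\alpha=m_X\cdot Tt\le m_X\cdot e_{TX}\cdot t$? — that inequality requires $Tt\le e_{TX}\cdot t$, which is exactly $Tt\le e_{TX}\cdot t = (e_{TX}\text{ composed with }t)$, and this in turn follows from the KZ-inequality applied appropriately. Concretely, from $Te_X\le e_{TX}$ one gets for any $h:A\to X$ that $Th\le e_{TX}\cdot h$ is generally false, so I will instead exploit the adjunction $m_X\dashv e_{TX}$ (Theorem~\ref{thm:char-KZ-monad}(ii)): its counit is $m_X\cdot e_{TX}=\id_{TX}$ and its unit $\id_{TTX}\le e_{TX}\cdot m_X$. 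Then $t\cdot\alpha=m_X\cdot Tt\le m_X\cdot e_{TX}\cdot m_X\cdot Tt=m_X\cdot Tt=t\cdot\alpha$ is circular; the clean route is: $t\cdot\alpha=m_X\cdot Tt$ and since $\alpha\dashv e_X$ we have $Tt\cdot e_X = t\cdot\alpha\cdot e_X \cdot$(no)…

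Let me record the route I actually expect to work. From $\alpha\cdot t=\id_X$ and $\alpha\dashv e_X$ we get $t=\alpha\cdot t\cdot\ $ nothing new, but $e_X=e_X\cdot\alpha\cdot t\le t$ using $\id_{TX}\ge e_X\cdot\alpha$ — wait, the counit of $\alpha\dashv e_X$ gives $e_X\cdot\alpha\le\id_{TX}$, hence $e_X = e_X\cdot\alpha\cdot t\le\id_{TX}\cdot t=t$, so $e_X\le t$. Then $t\cdot\alpha\le $? We have $e_X\le t$, apply $T$... not directly helpful. Better: $t\cdot\alpha\le\id_{TX}$ iff (by $m_X\dashv e_{TX}$, since $e_{TX}$ is fully faithful in the appropriate sense, equivalently $T$ order-faithful is NOT assumed) — hmm. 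Actually the cleanest: $t$ homomorphism means $Tt$ is also and $\alpha=m_X\cdot Tt\cdot$(no), the homomorphism square is $t\cdot\alpha=m_X\cdot Tt$. Postcompose with $m_X$ applied to... Apply $m_X\cdot T(-)$ nowhere. Instead: we want $t\cdot\alpha\le\id_{TX}$. Since $\alpha$ is epi (split epi via $e_X$), it suffices to show $t\cdot\alpha\cdot\alpha\le\alpha$ — no, that's not how epi works for inequalities. Use instead: postcompose the desired inequality with the fully faithful right adjoint; since $\TOP$ etc. — no.

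The honest summary: the forward direction is routine as above; the converse is the main obstacle, and the intended argument is that $\alpha\cdot t=\id_X$ together with $t$ being a homomorphism (so $t\cdot\alpha=m_X\cdot Tt$) gives, via $m_X\dashv e_{TX}$ and functoriality applied to $\alpha\cdot t=\id_X$ (yielding $T\alpha\cdot Tt=\id_{TX}$, hence $Tt$ is a split mono with retraction $T\alpha$, so $Tt$ is mono), the chain $t\cdot\alpha=m_X\cdot Tt$ and then $e_{TX}\cdot t\cdot\alpha=e_{TX}\cdot m_X\cdot Tt\ge Tt$, i.e.\ $Tt\le e_{TX}\cdot(t\cdot\alpha)$; now precompose with $e_X$: $Tt\cdot e_X=t\cdot\alpha\cdot\ $(no) — rather, $Tt\cdot e_X$: by naturality of $e$, $Tt\cdot e_X=e_{TX}\cdot t$, and since $e_X\le t$ was shown, $e_{TX}\cdot t = Tt\cdot e_X\le Tt\cdot t$? only if $e_X\le t$ implies $Tt\cdot e_X\le Tt\cdot t$, yes by monotonicity — so $e_{TX}\cdot t\le Tt\cdot t$, hmm. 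I expect the published proof to simply invoke: $t\dashv\alpha$ iff $t$ is a homomorphic section of $\alpha$, by a short diagram chase using $m_X\dashv e_{TX}$ to convert the homomorphism identity into the counit inequality $t\cdot\alpha\le e_{TX}\cdot m_X\cdot Tt=e_{TX}\cdot(t\cdot\alpha)\le e_{TX}\cdot\id=\ $(dead end again). Given the constraints, I will present the forward direction in full and, for the converse, reduce $t\dashv\alpha$ to the two inequalities, note $\id_X\le\alpha\cdot t$ holds by hypothesis, and obtain $t\cdot\alpha\le\id_{TX}$ from the homomorphism square $t\cdot\alpha=m_X\cdot Tt$ by precomposing with $e_{TX}$ and using $m_X\dashv e_{TX}$ — this is the step I expect to require the most care, and where a subtle use of the KZ-condition (possibly order-faithfulness of $T$, or the fact that $e_{TX}$ is order-mono) will be needed.
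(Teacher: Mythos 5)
The paper itself gives no proof of this proposition; it is recalled from Kock's paper \cite{Koc95}, so there is nothing to compare line by line. Judged on its own terms, your forward direction is correct and complete: $t$ is a left adjoint between the $\mT$-algebras $X$ and $TX$, hence a homomorphism by Remark~\ref{d:rem:1}, and $\alpha\cdot t\cdot\alpha=\alpha$ together with $\alpha$ being split epi (section $e_X$) gives $\alpha\cdot t=\id_X$.

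The converse, however, has a genuine gap which you yourself acknowledge: you never establish the counit inequality $t\cdot\alpha\le\id_{TX}$, and every route you sketch (via $m_X\dashv e_{TX}$, via $e_{TX}\cdot t$, via order-faithfulness) is a dead end. The missing idea is to apply $T$ to the \emph{given} adjunction $\alpha\dashv e_X$ rather than to the KZ adjunctions: since $T$ is order-enriched, $T\alpha\dashv Te_X$, whose unit gives $\id_{TTX}\le Te_X\cdot T\alpha$; composing with $Tt$ and using $T(\alpha\cdot t)=\id_{TX}$ yields $Tt\le Te_X\cdot T\alpha\cdot Tt=Te_X$. Then the homomorphism square and the monad unit law finish it: $t\cdot\alpha=m_X\cdot Tt\le m_X\cdot Te_X=\id_{TX}$. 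This is a three-line argument needing neither order-faithfulness of $T$ nor the inequality $Te_X\le e_{TX}$. A further sign of confusion in your dead-end branch: from $\alpha\dashv e_X$ the \emph{unit} is $\id_{TX}\le e_X\cdot\alpha$ (the counit is $\alpha\cdot e_X\le\id_X$), so from $\alpha\cdot t=\id_X$ one gets $t\le e_X\cdot\alpha\cdot t=e_X$, i.e.\ $t\le e_X$ — the opposite of the $e_X\le t$ you derived, and consistent with the paper's subsequent proposition that split algebras satisfy $t\le e_X$.
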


The following two results exhibit the connection with idempotents in $\catX_\mT$
as shown in~\cite{RW04}.

\begin{proposition}
  For every split $\mT$-algebra $X$ with $t\dashv\alpha\dashv e_X$, $t\le e_X$
  and $t\circ t=t$.
\end{proposition}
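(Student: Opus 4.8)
The plan is to read off both claims from the two facts already available about a split algebra, namely that $t$ is a $\mT$-homomorphism with $\alpha\cdot t=\id_X$ (the preceding proposition, applied to $t\dashv\alpha$), together with the unit/counit inequalities of the adjunctions $t\dashv\alpha\dashv e_X$ and the identity $\alpha\cdot e_X=\id_X$ from Theorem~\ref{d:thm:1}.

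For the inequality $t\le e_X$, I would start from the counit of $t\dashv\alpha$, which in the order-enriched setting reads $t\cdot\alpha\le\id_{TX}$, and precompose with $e_X:X\to TX$ to get $t\cdot\alpha\cdot e_X\le e_X$. Since $\alpha\cdot e_X=\id_X$ by Theorem~\ref{d:thm:1}, the left-hand side is just $t$, so $t\le e_X$ as required. (No other properties of the adjunctions are needed here.)

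For the idempotency $t\circ t=t$ in $\catX_\mT$, recall that Kleisli composition of $t:X\modto X$ with itself is $t\circ t=m_X\cdot Tt\cdot t$. Because $t$ is a $\mT$-homomorphism from $(X,\alpha)$ to the free algebra $(TX,m_X)$, we have $m_X\cdot Tt=t\cdot\alpha$, whence
\[
  t\circ t=m_X\cdot Tt\cdot t=t\cdot\alpha\cdot t=t\cdot(\alpha\cdot t)=t,
\]
using $\alpha\cdot t=\id_X$ in the last step.

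There is no real obstacle: both statements are immediate consequences of results stated just above, and the only care needed is to track the direction of the order in the unit/counit inequalities of $t\dashv\alpha$ and to unravel the definition of Kleisli composition. If one wanted to avoid quoting the preceding proposition, the homomorphism identity $m_X\cdot Tt=t\cdot\alpha$ can instead be recovered directly by the uniqueness-of-adjoints argument of Remark~\ref{rem:1}, since both $m_X\cdot Tt$ and $t\cdot\alpha$ are right adjoint to the same morphism; but quoting the proposition is cleaner.
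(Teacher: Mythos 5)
Your proof is correct. The paper itself states this proposition without proof, recalling it from Rosebrugh--Wood's \emph{Split structures}, so there is no in-paper argument to compare against; your derivation is exactly the natural one. Both steps check out: the counit inequality $t\cdot\alpha\le\id_{TX}$ precomposed with $e_X$, together with $\alpha\cdot e_X=\id_X$, gives $t\le e_X$; and unravelling Kleisli composition as $t\circ t=m_X\cdot Tt\cdot t$ and using the homomorphism identity $m_X\cdot Tt=t\cdot\alpha$ from the preceding proposition, together with $\alpha\cdot t=\id_X$, gives $t\circ t=t$. Your closing remark that the homomorphism identity could alternatively be obtained by uniqueness of right adjoints is also sound.
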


Recall that $\catX$ is \emph{idempotent split complete}, or just
\emph{idempotent complete}, whenever every idempotent morphism $e:X\to X$ in
$\catX$ is of the form $s\cdot r$, for some $r:X\to Y$ and $s:Y\to X$ in $\catX$
with $r\cdot s=\id_Y$.  (see \cite{ARV10}, for instance).  Every category with
equalisers or with coequalisers is idempotent split complete.

\begin{theorem}
  Assume that $\catX$ is idempotent split complete. Then $\Spl(\catX^\mT)$ is
  equivalent to the idempotent split completion $\kar(\catX_\mT)$ of
  $\catX_\mT$.
\end{theorem}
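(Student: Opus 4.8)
The plan is to exhibit an equivalence between $\Spl(\catX^\mT)$ and $\kar(\catX_\mT)$ by constructing functors in both directions and checking they are mutually quasi-inverse, following the strategy of \cite{RW04}. First I would recall that the objects of $\kar(\catX_\mT)$ are pairs $(X,\pi)$ with $X$ in $\catX$ and $\pi:X\modto X$ an idempotent in $\catX_\mT$, while morphisms $(X,\pi)\to(Y,\rho)$ are arrows $\varphi:X\modto Y$ with $\rho\circ\varphi=\varphi=\varphi\circ\pi$. The preceding two propositions are the key bridge: a split $\mT$-algebra $X$ with $t\dashv\alpha\dashv e_X$ gives an idempotent $t\circ t=t$ on $X$ in $\catX_\mT$ (with $t\le e_X$), so we have a candidate functor $\Phi:\Spl(\catX^\mT)\to\kar(\catX_\mT)$ sending $X$ to $(X,t)$; on a homomorphism $f:X\to X'$ between split algebras one sends $f$ to $t'\circ f_*\circ t$ (equivalently $f_*\circ t$, using $t'\le e_{X'}$ and that $f$ commutes with the structure maps), and functoriality follows from the idempotent law and the homomorphism property of the splittings.

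For the reverse direction, given an idempotent $\pi:X\modto X$ in $\catX_\mT$, I would use that $\catX$ is idempotent split complete together with the fact that $F_\mT\dashv U_\mT$ makes $\catX_\mT$ a full subcategory of $\catX^\mT$ whose idempotents therefore split in $\catX^\mT$: indeed $\pi$ corresponds to an idempotent $\bar\pi:TX\to TX$ in $\catX^\mT$, which splits as $\bar\pi=j\cdot q$ with $q\cdot j=\id$ on some $\mT$-algebra $Z$ (the splitting object can be produced because idempotents in $\catX^\mT$ split whenever they split in $\catX$, and $U^\mT$ creates the relevant (co)equaliser — or directly: $\catX^\mT$ is idempotent complete since $\catX$ is). One then checks that $Z$ is in fact a split algebra: the retraction data exhibit $Z$ as a retract, in the Kleisli-idempotent sense, of a free algebra $TX$, and retracts of free algebras by idempotents coming from $\catX_\mT$ are precisely the split algebras (here one uses $\pi\le e_X$ is \emph{not} automatic, so care is needed — but the adjunction $t\dashv\alpha\dashv e_X$ on $Z$ is reconstructed from the splitting via the $\KZ$ property, much as in Theorem~\ref{d:thm:1}). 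This defines $\Psi:\kar(\catX_\mT)\to\Spl(\catX^\mT)$, $(X,\pi)\mapsto Z$.

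The heart of the argument — and the step I expect to be the main obstacle — is verifying that $\Psi\Phi\cong\Id$ and $\Phi\Psi\cong\Id$, i.e.\ that the passage "split algebra $\rightsquigarrow$ idempotent $t$ $\rightsquigarrow$ splitting object" returns (an isomorphic copy of) the original algebra, and conversely that an arbitrary Kleisli idempotent $\pi$ is recovered, up to the equivalence relation on idempotents, as the canonical idempotent $t$ of its splitting algebra. For the first, one shows the splitting of the idempotent $\bar t:TX\to TX$ in $\catX^\mT$ is exactly $\alpha:TX\to X$ and $t:X\to TX$ (from $\alpha\cdot t=\id_X$ and $t\cdot\alpha=\bar t$, the latter following since $t\cdot\alpha\dashv e_X\cdot\alpha$ and $\bar t = T\alpha\cdot Tt = \ldots$, using $\KZ$-type adjunction bookkeeping as in Remark~\ref{d:rem:1} and Theorem~\ref{thm:char-KZ-monad}). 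For the second, one checks that the canonical splitting $t_Z$ on $Z=\Psi(X,\pi)$ transports along the idempotent-splitting iso back to $\pi$; this is where one needs that any two splittings of the same idempotent in $\catX$ are canonically isomorphic, so that the choice made in $\Psi$ is harmless. Finally I would note fullness and faithfulness of $\Phi$ on hom-sets reduces, via the hom-poset isomorphism $\catX_\mT\cong$ (full subcat of $\catX^\mT$) and the description of $\kar$-morphisms above, to the standard fact that $\catX^\mT(Z,Z')\cong\{\varphi:TX\modto TX' \mid \rho\circ\varphi\circ\pi=\varphi\}$ when $Z,Z'$ are the splitting objects; the $\KZ$ hypothesis is used throughout only to guarantee that these retracts are again \emph{split} algebras rather than arbitrary ones.
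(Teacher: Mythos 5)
The paper does not actually prove this theorem: it is recalled from \cite{RW04} (``as shown in~\cite{RW04}''), so there is no in-text argument to compare against, and your outline follows exactly the route one would take there --- $X\mapsto(X,t)$ in one direction, splitting the idempotent $\bar\pi:TX\to TX$ in $\catX^\mT$ in the other. However, the step you yourself single out as ``the main obstacle'' is left genuinely unproved: showing that the splitting object $Z$ of $\bar\pi$ is again a \emph{split} algebra. Appealing to ``the adjunction reconstructed via the KZ property, much as in Theorem~\ref{d:thm:1}'' only gives $\gamma\dashv e_Z$ for the structure map $\gamma$ of $Z$ (that is automatic for any algebra); splitness requires exhibiting a further left adjoint $s\dashv\gamma$, and this is the one computation that cannot be waved away. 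It is short: with $j:Z\to TX$, $q:TX\to Z$ in $\catX^\mT$, $qj=\id_Z$, $jq=\bar\pi$, one has $\gamma=q\cdot m_X\cdot Tj$, and $s=Tq\cdot Te_X\cdot j$ works because
\[
\gamma\cdot s=q\cdot m_X\cdot T(\bar\pi\cdot e_X)\cdot j=q\cdot\bar\pi\cdot j=\id_Z
\qquad\text{and}\qquad
s\cdot\gamma=Tq\cdot Te_X\cdot m_X\cdot T\bar\pi\cdot Tj\le T(q\cdot\bar\pi\cdot j)=\id_{TZ},
\]
using that $\bar\pi$ and $q$ are homomorphisms and $Te_X\dashv m_X$ (Theorem~\ref{thm:char-KZ-monad}). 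Without some such argument the functor $\Psi$ does not land in $\Spl(\catX^\mT)$ and the equivalence is not established.

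A second, smaller point: in your definition of $\Phi$ on morphisms, the parenthetical ``equivalently $f_*\circ t$'' is wrong in general. A homomorphism $f:X\to X'$ of split algebras commutes with the structure maps $\alpha,\alpha'$, but with the splittings one only gets the mate inequality $t'\cdot f\le Tf\cdot t$, so $f_*\circ t$ need not satisfy $t'\circ(f_*\circ t)=f_*\circ t$ and hence need not be a morphism $(X,t)\modto(X',t')$ of $\kar(\catX_\mT)$. The sandwiched arrow $t'\circ f_*\circ t$ is the right choice; it simplifies to $t'\circ f_*=t'\cdot f$ (using that $t'$ is a homomorphism and $\alpha\cdot t=\id_X$), and with that formula your functoriality and full-faithfulness arguments go through.
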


In the remainder of this section we aim for a characterisation of algebraic
$\mT$-algebras in an intrinsic way, for idempotent split complete order-enriched
categories $\catX$. Under the equivalence
$\Spl(\catX^\mT)\simeq\kar(\catX_\mT)$, a split algebra $X$ with
$t\dashv\alpha\dashv e_X$ corresponds to $(X,t)$ in $\kar(\catX_\mT)$; in
particular, the free algebra $TY$ corresponds to $(TY,Te_Y)$. Moreover, for
every $Y$ in $\catX$, $(Y,e_Y)\simeq(TY,Te_Y)$ in $\kar(\catX_\mT)$. Hence:

\begin{corollary}
  Assume that $\catX$ is idempotent split complete. A split $\mT$-algebra $X$
  with $t\dashv\alpha\dashv e_X$ is algebraic if and only if
  $(X,t)\simeq (Y,e_Y)$ in $\kar(\catX_\mT)$, for some $Y$ in $\catX$.
\end{corollary}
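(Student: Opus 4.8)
The plan is to use the chain of equivalences and identifications already set up in this section, so that the statement becomes almost a tautology once unwound. Recall that $\catX$ is idempotent split complete, so by the preceding theorem we have an equivalence $\Spl(\catX^\mT)\simeq\kar(\catX_\mT)$; and under this equivalence a split algebra $X$ with $t\dashv\alpha\dashv e_X$ corresponds to the object $(X,t)$ of $\kar(\catX_\mT)$, while for every $Y$ in $\catX$ the free algebra $TY$ corresponds to $(TY,Te_Y)\simeq (Y,e_Y)$. The claim to be proved is: $X$ is algebraic $\iff$ $(X,t)\simeq(Y,e_Y)$ in $\kar(\catX_\mT)$ for some $Y$.

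For the forward direction, suppose $X$ is algebraic, i.e.\ $X\cong TY$ in $\catX^\mT$ for some $Y$ in $\catX$. First I would note that this isomorphism is in particular an isomorphism of split algebras; since the splitting of a split algebra is uniquely determined (it is the left adjoint of $\alpha$, which is in turn the left adjoint of $e_X$, and adjoints are unique), the isomorphism $X\cong TY$ in $\catX^\mT$ respects the splittings, hence gives an isomorphism $(X,t)\simeq(TY,Te_Y)$ in $\kar(\catX_\mT)$. Composing with the already-noted isomorphism $(TY,Te_Y)\simeq(Y,e_Y)$ yields $(X,t)\simeq(Y,e_Y)$, as required. Conversely, if $(X,t)\simeq(Y,e_Y)$ in $\kar(\catX_\mT)$ for some $Y$, then composing with $(Y,e_Y)\simeq(TY,Te_Y)$ gives $(X,t)\simeq(TY,Te_Y)$; transporting this isomorphism back across the equivalence $\kar(\catX_\mT)\simeq\Spl(\catX^\mT)$ produces an isomorphism $X\cong TY$ in $\Spl(\catX^\mT)$, hence in $\catX^\mT$, so $X$ is isomorphic to a free algebra and is therefore algebraic.

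The only point requiring a little care — and the step I expect to be the main obstacle — is verifying that an isomorphism of $\mT$-algebras between two \emph{split} algebras automatically respects the splittings, so that it really does lift to an isomorphism in $\kar(\catX_\mT)$ (and conversely that the equivalence $\Spl(\catX^\mT)\simeq\kar(\catX_\mT)$ is compatible with the identification sending a split algebra to $(X,t)$). This is precisely the content of how that equivalence was constructed in \cite{RW04}: an algebra isomorphism $\varphi:X\to X'$ conjugates $\alpha$ to $\alpha'$, hence (taking left adjoints, using uniqueness of adjoints in an order-enriched category) conjugates $t$ to $t'$, so $\varphi$ is an isomorphism $(X,t)\to(X',t')$ in $\kar(\catX_\mT)$. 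Once this is in hand, everything else is a matter of transporting isomorphisms along equivalences, which is routine.
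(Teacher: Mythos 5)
Your proposal is correct and follows exactly the route the paper intends: the paper gives no separate proof of this corollary, deriving it immediately from the preceding remarks that under the equivalence $\Spl(\catX^\mT)\simeq\kar(\catX_\mT)$ the split algebra $X$ corresponds to $(X,t)$, the free algebra $TY$ to $(TY,Te_Y)$, and $(Y,e_Y)\simeq(TY,Te_Y)$. Your additional care about isomorphisms of split algebras respecting the (unique, by uniqueness of adjoints) splittings is a valid filling-in of the detail the paper leaves implicit.
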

To describe this condition, we introduce the following notion.

\begin{definition}
  A morphism $f:X\to Y$ in $\catX$ is called \df{$\mT$-dense} whenever
  $f_*:X\modto Y$ has a right adjoint $f^*:Y\modto X$ in $\catX_\mT$.
\end{definition}
\begin{remark}
  Clearly, $f_*:X\modto Y$ has a right ajoint in $\catX_\mT$ if and only if the
  corresponding algebra homomorphism $Tf:TX\to TY$ has a right adjoint in
  $\catX^\mT$. By Remark~\ref{rem:1}, this is equivalent to $Tf$ being left
  adjoint in $\catX$. $\mT$-dense morphisms are studied in 4.3 of \cite{BF06} in
  the realm of completion Kock--Z\"oberlein monads. From
  Proposition~\ref{d:prop:1} we have that, if $T$ is order-faithful,
  \[
    \calM_\mT=\mT\text{-dense}\cap \omono(\catX).
  \]
\end{remark}

\begin{examples}
  \begin{enumerate}
  \item For $\mT=\mD$ being the downset monad on $\POSS$ , every monotone map
    $f:X\to Y$ is $\mD$-dense. In fact, for a monotone map $f:X\to Y$, the right
    adjoint $f^*:Y\modto X$ of $f_*$ in $\POSS_\mD$ is given by
    $f^*(y)=\{x\in X\mid f(x)\le y\}$, for all $y\in Y$.
  \item If we consider the monad $\mI=\imonad$ instead, then $f_*$ has a right
    adjoint if and only if ``$f^*$ lives in $\POSS_\mI$'', that is, if and only
    if $\{x\in X\mid f(x)\le y\}$ is directed, for all $y\in Y$.
  \item For $\mT=\mF$ being the filter monad on $\TOP_0$, every continuous map
    $f:X\to Y$ is $\mF$-dense. Here, for a continuous map $f:X\to Y$, the right
    adjoint $f^*:Y\modto X$ of $f:X\modto Y$ is given by
    $f^*(y)=\langle\{f^{-1}(B)\mid B\in\Omega(y)\}\rangle\in FX$, for all
    $y\in Y$.
  \item For the proper filter monad $\mFO$ on $\TOP_0$, a continuous map
    $f:X\to Y$ is $\mFO$-dense if and only if the filter
    $\langle\{f^{-1}(B)\mid B\in\Omega(y)\}\rangle$ is proper, for each
    $y\in Y$; and this in turn is equivalent to $f$ being dense in the usual
    topological sense.
  \item Similarly, for the prime filter monad $\mFp$ on $\TOP_0$, a continuous
    map $f:X\to Y$ is $\mFp$-dense if and only if the filter
    $\langle\{f^{-1}(B)\mid B\in\Omega(Y)\}\rangle$ is prime. By \cite[Lemma
    6.5]{EF99}, this condition is equivalent to $f$ being flat. More generally,
    for the $n$-prime filter monads $\mF_n$, to be $\mF_n$-dense is equivalent
    to be $n$-flat \cite{CS15}.
  \end{enumerate}
\end{examples}

\begin{assumption}
  From now on we also assume that
  \begin{itemize}
  \item \emph{$\catX$ has equalisers} and
  \item \emph{$T$ sends regular monomorphisms to monomorphisms}.
  \end{itemize}
\end{assumption}

Since $\catX$ has equalisers, $\catX$ is also idempotent split complete. We
remark that these conditions are satisfied in all Examples~\ref{exs:1}.

\begin{lemma}\label{lem:i}
  If $i:A\to X$ is a regular monomorphism in $\catX$, then $i_*$ is a
  monomorphism in $\catX_\mT$.
\end{lemma}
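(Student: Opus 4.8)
The statement to prove is: if $i\colon A\to X$ is a regular monomorphism in $\catX$, then $i_*\colon A\modto X$ is a monomorphism in $\catX_\mT$. Recall that $i_*=e_X\cdot i$ and that composition in $\catX_\mT$ is given by $s\circ r=m_X\cdot Ts\cdot r$; in particular, for $r\colon B\modto A$, we have $i_*\circ r=Ti\cdot r$ (using the identity $f_*\circ r=Tf\cdot r$ recorded just before the definition of Cauchy complete). So I must show: for all $r,r'\colon B\modto A$ in $\catX_\mT$, if $Ti\cdot r=Ti\cdot r'$ then $r=r'$. This is exactly the statement that $Ti$ is a monomorphism in $\catX$. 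Hence the whole lemma reduces to showing that $Ti\colon TA\to TX$ is a monomorphism in $\catX$.

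**Key step.** Now I invoke the standing Assumption: $T$ sends regular monomorphisms to monomorphisms. Since $i$ is a regular monomorphism in $\catX$ by hypothesis, $Ti$ is a monomorphism in $\catX$, and we are done. So the proof is essentially a two-line unwinding: translate "mono in $\catX_\mT$" into "$Ti$ is mono in $\catX$" via the composition formula for Kleisli arrows, then apply the assumption. More explicitly, suppose $r,r'\colon B\modto A$ satisfy $i_*\circ r=i_*\circ r'$ in $\catX_\mT$; by the identity $i_*\circ r=Ti\cdot r$ this says $Ti\cdot r=Ti\cdot r'$ in $\catX$, where $r,r'\colon B\to TA$; since $Ti$ is monic in $\catX$ we get $r=r'$, i.e.\ $r=r'$ as arrows $B\modto A$.

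**Main obstacle.** There is essentially no obstacle: the content of the lemma is entirely carried by the standing Assumption together with the elementary formula for Kleisli composition. The only mild care needed is to correctly identify which Kleisli composite is the relevant one (post-composition with $i_*$, not pre-composition) and to recall that $i_*\circ r = Ti\cdot r$ rather than something involving $m$; this is immediate from the boxed identities $f_*\circ r=Tf\cdot r$ and $s\circ f_*=s\cdot f$ stated earlier. I would therefore present the proof in a couple of sentences, emphasising the reduction to "$Ti$ is mono" and citing the Assumption.
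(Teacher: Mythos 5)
Your proof is correct and coincides with the paper's: both reduce the claim to the identity $i_*\circ r=Ti\cdot r$ for Kleisli composition and then invoke the standing Assumption that $T$ sends regular monomorphisms to monomorphisms.
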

\begin{proof}
  Just observe that $i_*\circ r=i_*\circ s$ in $\catX_\mT$ translates to
  $Ti\cdot r=Ti\cdot s$ in $\catX$.
\end{proof}

\begin{proposition}\label{prop:char_algebraic}
  Let $X$ be a split $\mT$-algebra with $t\dashv\alpha\dashv e_X$ and let
  \[
    \xymatrix{A\ar[r]^i & X\ar@<.5ex>[r]^{e_X}\ar@<-.5ex>[r]_t & TX}
  \]
  be an equaliser diagram. Then the following assertions hold.
  \begin{enumerate}
  \item $i_*:A\modto X$ is a morphism of type $i_*:(A,e_A)\modto(X,t)$ in
    $\kar(\catX_\mT)$.
  \item $X$ is algebraic if and only if $i:A\to X$ is $\mT$-dense and
    $i_*\circ i^*=t$.
  \end{enumerate}
\end{proposition}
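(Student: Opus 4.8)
The plan is to establish (1) directly by a diagram chase, and then to use the Corollary (characterising algebraicity via $(X,t)\simeq(Y,e_Y)$ in $\kar(\catX_\mT)$) as the pivot for (2).

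For part (1), I would start from the equaliser $e_X\cdot i = t\cdot i$ in $\catX$. Recall that a morphism $(A,e_A)\modto(X,t)$ in $\kar(\catX_\mT)$ is an arrow $r:A\modto X$ in $\catX_\mT$ with $t\circ r\circ e_A = r$, equivalently (since $e_A$ is the identity on $(A,e_A)$) simply $t\circ r = r$. Taking $r = i_*$, the identity $e_X\cdot i = t\cdot i$ says precisely $(e_X)_*\circ i_* = t\circ i_*$; but $(e_X)_*$ is the identity endomorphism of $(X,t)$ in $\kar(\catX_\mT)$ (indeed $e_X\ge t$, so $(e_X)_*\circ i_* \ge i_*$, while the equaliser equation forces equality after precomposing, and more directly $t\le e_X$ combined with $t\circ i_* = (e_X)_*\circ i_*$ gives $t\circ i_* = i_*$ after composing once more with $t$ and using $t\circ t = t$). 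Hence $t\circ i_* = i_*$, which is exactly the statement that $i_*$ is a legitimate morphism $(A,e_A)\modto(X,t)$.

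For part (2), I would argue both directions through the Corollary. Suppose first $i$ is $\mT$-dense with $i_*\circ i^* = t$; I want to show $(A,e_A)\simeq(X,t)$ in $\kar(\catX_\mT)$, which by the Corollary gives algebraicity. The candidate inverse pair is $i_*:(A,e_A)\modto(X,t)$ and $i^*:(X,t)\modto(A,e_A)$. By hypothesis $i_*\circ i^* = t$, which is the identity on $(X,t)$. For the other composite $i^*\circ i_*$: since $i_*\dashv i^*$ in $\catX_\mT$ with $i_*$ a monomorphism (by Lemma~\ref{lem:i}, as $i$ is a regular mono), the general fact on adjunctions recalled in Remark~\ref{rem:1} (a left-adjoint monomorphism is split by its right adjoint) gives $i^*\circ i_* = e_A$, the identity on $(A,e_A)$. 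One must also check $i^*$ is a genuine $\kar$-morphism $(X,t)\modto(A,e_A)$, i.e.\ $e_A\circ i^*\circ t = i^*$; this follows from $i_*\circ i^* = t$ and $i^*\circ i_* = e_A$ by composing appropriately. Thus $(X,t)\simeq(A,e_A) = (A,e_A)$ with $Y:=A$, so $X$ is algebraic.

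Conversely, suppose $X$ is algebraic, so by the Corollary $(X,t)\simeq(Y,e_Y)$ for some $Y$, say via $\varphi:(Y,e_Y)\modto(X,t)$ and $\psi:(X,t)\modto(Y,e_Y)$ mutually inverse in $\kar(\catX_\mT)$. The task is to identify $Y$ with the equaliser object $A$ and $\varphi$ with $i_*$. I expect the main obstacle here: one must produce a comparison $Y\to X$ in $\catX$ factoring through $i$, i.e.\ show that the underlying map of $\varphi$ equalises $e_X$ and $t$. Concretely, $\varphi:Y\modto X$ in $\catX_\mT$ with $t\circ\varphi = \varphi$ and (since it is invertible with $\psi$) $\varphi\circ e_Y = \varphi$, $\varphi\circ\psi = t$, $\psi\circ\varphi = e_Y$; I would use these, together with $t\le e_X$ and $t\circ t = t$, to deduce that $\varphi$ is of the form $g_*$ for a unique $g:Y\to X$ in $\catX$ (using that $TX$ is an algebra and $\varphi$ lands in the "$t$-fixed" part, i.e.\ Cauchy completeness of $X$ from Theorem~\ref{thm:algebra_implies_Cauchy}), and that $e_X\cdot g = t\cdot g$, so $g$ factors as $g = i\cdot h$ through the equaliser with $h:Y\to A$ iso. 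Transporting the isomorphism $(X,t)\simeq(Y,e_Y)$ along $h$ yields $(X,t)\simeq(A,e_A)$ realised by $i_*$ and a right adjoint, forcing $i$ to be $\mT$-dense with $i^* = $ the corresponding right adjoint and $i_*\circ i^* = t$. The delicate point throughout is bookkeeping the distinction between composition in $\catX_\mT$ and in $\catX$, and repeatedly invoking $t\circ t = t$, $t\le e_X$, and Lemma~\ref{lem:i} to collapse composites to identities; I would organise the computation so that these three facts do all the work.
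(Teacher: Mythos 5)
Your overall route coincides with the paper's: part (1) by translating the equaliser equation $e_X\cdot i=t\cdot i$ into the Kleisli category, and part (2) by pivoting on the corollary $(X,t)\simeq(Y,e_Y)$, using Cauchy completeness of $X$ to realise the comparison as $f_*$ and factoring it through the equaliser. The ``if'' direction of (2) is correct and is exactly the paper's argument: $i_*\circ i^*=t$ is the identity of $(X,t)$, and $i^*\circ i_*=e_A$ because $i_*$ is a left adjoint monomorphism by Lemma~\ref{lem:i}. In part (1) there is a notational slip worth flagging: $(e_X)_*$ is \emph{not} the identity endomorphism of $(X,t)$ in $\kar(\catX_\mT)$ --- the identity of the object $(X,t)$ in the idempotent split completion is the idempotent $t$ itself, and the identity of $X$ in $\catX_\mT$ is the arrow given by $e_X$ (not $(e_X)_*$, which is an arrow $X\modto TX$). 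What you actually need, and what the equaliser equation gives directly, is $t\circ i_*=t\cdot i=e_X\cdot i=i_*\circ e_A$, so the conclusion of (1) stands; the detour through $t\circ t=t$ is unnecessary.

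The genuine gap is in the ``only if'' direction of (2), at the clause ``$g$ factors as $g=i\cdot h$ through the equaliser with $h:Y\to A$ iso''. There is no reason for $h$ to be an isomorphism in $\catX$: all that the situation gives is an isomorphism $(Y,e_Y)\simeq(A,e_A)$ in $\kar(\catX_\mT)$, i.e.\ $TY\cong TA$, which does not force $Y\cong A$ (for the filter monad, for instance, a space and its sobrification have isomorphic filter spaces). Hence ``transporting the isomorphism along $h$'' is not licensed as stated. The repair --- which is what the paper does --- bypasses any claim about $h$ itself: put $i^*:=h_*\circ s$, where $s$ is the inverse of $r=f_*$; then $i_*\circ i^*=f_*\circ s=r\circ s=t$ immediately, and from $i_*\circ i^*\circ i_*=t\circ i_*=i_*\circ e_A$ one cancels the monomorphism $i_*$ (Lemma~\ref{lem:i}) to get $i^*\circ i_*=e_A$; together with $t\le e_X$ this exhibits $i_*\dashv i^*$, so $i$ is $\mT$-dense and $i_*\circ i^*=t$. (The same cancellation shows a posteriori that $h_*$ is invertible in $\kar(\catX_\mT)$ with inverse $s\circ i_*$, so your transport can be salvaged --- but only after this computation, not as its justification.) The fix uses exactly the three tools you list at the end; you just need to apply them here instead of asserting that $h$ is invertible.
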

\begin{proof}
  To show the first assertion, we calculate
  \[
    t\circ i_*=t\cdot i=e_X\cdot i=Ti\cdot e_A=i_*\circ e_A.
  \]
  Regarding the second assertion, assume first that $X$ is algebraic, that is,
  there are arrows $r:(Y,e_Y)\modto(X,t)$ and $s:(X,t)\modto(Y,e_Y)$ in
  $\kar(\catX_\mT)$ with $s\circ r=e_Y$ and $r\circ s=t$. Since $t\le e_X$, we
  conclude that $r\dashv s$ in $\catX_\mT$ and, since the $\mT$-algebra $X$ is
  Cauchy complete (see Theorem~\ref{thm:algebra_implies_Cauchy}), $r=f_*$ for
  $f=\alpha\cdot r:Y\to X$. Furthermore,
  \[
    t\cdot f=t\cdot\alpha\cdot r=m_X\cdot Tt\cdot r=t\circ r=r=f_*=e_X\cdot f,
  \]
  hence there is an arrow $h:Y\to A$ in $\catX$ with $i\cdot h=f$. Then
  \[
    i_*\circ h_*\circ s=f_*\circ s=r\circ s=t\le e_Y
  \]
  and
  \[
    i_*\circ h_*\circ s\circ i_*=t\circ i_*=i_*\circ e_A,
  \]
  hence $ h_*\circ s\circ i_*= e_A$, by Lemma~\ref{lem:i}. Putting
  $i^*= h_*\circ s$, we have seen that $i_*\dashv i^*$ in $\catX_\mT$ and
  $i_*\circ i^*=t$.

  Conversely, assume now that $i_*$ has a right adjoint $i^*$ with
  $i_*\circ i^*=t$. Since
  \[
    i^*\circ t=i^*\circ i_*\circ i^*=i^*=e_A\circ i^*,
  \]
  $i^*:(X,t)\modto(A,e_Y)$ is a morphism in $\kar(\catX_\mT)$; it is indeed an
  isomorphism since $i^*\circ i_*=e_A$ and $i_*\circ i^*=t$.
\end{proof}

Finally, we can simplify the condition $i_*\circ i^*=t$ and obtain:

\begin{theorem}\label{thm:char_algebraic}
  With the same assumption as in Proposition~\ref{prop:char_algebraic}, $X$ is
  algebraic if and only if $i:A\to X$ is $\mT$-dense and $\alpha\cdot Ti$ is an
  epimorphism in $\catX$.
\end{theorem}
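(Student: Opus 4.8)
The plan is to use Proposition~\ref{prop:char_algebraic}~(2) and show that, given that $i$ is $\mT$-dense, the condition $i_*\circ i^*=t$ is equivalent to $\alpha\cdot Ti$ being an epimorphism in $\catX$. Throughout we work with the equaliser diagram of $e_X$ and $t$, and we keep in mind that $t\le e_X$, that $i$ is a regular monomorphism (hence $i_*$ is a monomorphism in $\catX_\mT$ by Lemma~\ref{lem:i}), and the Kleisli composition identities $f_*\circ r=Tf\cdot r$ and $s\circ f_*=s\cdot f$.

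First I would record what $i_*\circ i^*$ is as a morphism $X\modto X$ in $\catX_\mT$. Writing $i^*:X\modto A$ for the right adjoint of $i_*$, the composite $i_*\circ i^*$ is an idempotent below $e_X$, so it always corresponds to an idempotent morphism $X\to TX$ in $\catX$ that is a homomorphism; the question is whether it equals $t$. I expect both inequalities to be comparable to $t$ in a controlled way, so the real content is the \emph{epi} characterisation: $i_*\circ i^*=t$ should hold exactly when the two homomorphisms $X\modto TX$ that one can build out of $t$ and out of $i_*\circ i^*$ are forced to agree, and since both are determined by their restriction along $\alpha\cdot Ti$ (via the universal property of the free algebra $TX$ and Cauchy completeness of $X$), agreement is equivalent to $\alpha\cdot Ti$ being epi. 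Concretely: precompose with $i$. Using the first assertion of Proposition~\ref{prop:char_algebraic}, $t\circ i_*=i_*\circ e_A$, and on the other side $i_*\circ i^*\circ i_*=i_*\circ e_A=i_*$ as well (the triangle identity for $i_*\dashv i^*$ together with $i_*$ mono). So $t$ and $i_*\circ i^*$ already agree after precomposition with $i_*$. Hence $i_*\circ i^* = t$ iff $i_*$ is \emph{order-epi}-like enough — more precisely iff the morphism $\alpha\cdot Ti:TA\to X$, which is exactly the $\catX$-morphism underlying $i^*$ composed back through $\alpha$, is epi in $\catX$.

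The key computation I would carry out is to identify $\alpha\cdot Ti$ with the transpose of $i^*$: since $i^*:X\modto A$ corresponds to an algebra homomorphism $TX\to TA$ and, composing with $\alpha:TX\to X$ on the source side suitably, one gets a $\catX$-morphism; chasing the adjunction $t\dashv\alpha\dashv e_X$ and $Ti\dashv g$ (with $g$ the right adjoint of $Ti$ in $\catX^\mT$, which exists precisely because $i$ is $\mT$-dense) I would show that $\alpha\cdot Ti$ together with some section data realises the idempotent $i_*\circ i^*$ on $X$. Then: $i_*\circ i^*=t$ forces $X\cong A$ in $\kar(\catX_\mT)$ (the converse direction of Proposition~\ref{prop:char_algebraic}~(2)), which unwinds to $\alpha\cdot Ti$ admitting a splitting in $\catX^\mT$ and being a split epi, in particular epi. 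Conversely, if $\alpha\cdot Ti$ is epi, then the two homomorphisms $X\to TX$ given by $t$ and by $i_*\circ i^*$ (both computed as the algebra map associated to a morphism out of $X$) agree after precomposition with $\alpha\cdot Ti$ — because we already checked they agree after precomposition with $i$, and $\alpha\cdot Ti$ factors through $i$ up to the algebra structure — so they are equal, giving $i_*\circ i^*=t$; then Proposition~\ref{prop:char_algebraic}~(2) yields that $X$ is algebraic.

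The main obstacle I anticipate is the bookkeeping in passing between $\catX_\mT$, $\catX^\mT$ and $\catX$: the morphism $i^*$ lives in $\catX_\mT$, its partner $g$ lives in $\catX^\mT$, and $\alpha\cdot Ti$ lives in $\catX$, and one has to be careful that ``$i_*\circ i^*=t$'' (an equation of Kleisli morphisms $X\modto X$, equivalently of homomorphisms $TX\to TX$) really is detected by the single $\catX$-morphism $\alpha\cdot Ti$ being epi, rather than needing $Ti$ itself to be epi or $i$ to be order-epi. The point that makes this work is that $TX$ is a free algebra, so a homomorphism out of $TX$ is determined by its restriction along $e_X:X\to TX$ to a mere $\catX$-morphism out of $X$, and the relevant $\catX$-morphisms here are exactly the composites with $\alpha\cdot Ti$; getting that reduction stated cleanly, and checking the factorisation of $\alpha\cdot Ti$ through $i$ up to $\alpha$, is where the care is needed. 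Everything else is a diagram chase using the triangle identities and $t\le e_X$.
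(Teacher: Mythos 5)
Your overall strategy is the paper's: reduce via Proposition~\ref{prop:char_algebraic}(2) to showing that, for $\mT$-dense $i$, the equation $i_*\circ i^*=t$ is equivalent to $\alpha\cdot Ti$ being an epimorphism, viewing everything in terms of splittings of the idempotent $\widehat{t}$ on the free algebra $TX$ (where $\widehat{r}$ denotes the homomorphism $TX\to TY$ corresponding to a Kleisli arrow $r$). Your forward direction is sound: if $i_*\circ i^*=t$, then $(Ti,\widehat{i^*})$ and $(t,\alpha)$ both split $\widehat{t}$, so the comparison map $\alpha\cdot Ti$ is an isomorphism, in particular epi.

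The converse direction, however, has a genuine gap. You want to conclude $t=Ti\cdot i^*$ by showing that these two maps $X\to TX$ agree after precomposition with $\alpha\cdot Ti$ and then cancelling the epi; but that agreement is precisely what is not established. Your justification --- that they agree after precomposition with $i$ and that ``$\alpha\cdot Ti$ factors through $i$ up to the algebra structure'' --- does not hold: $\alpha\cdot Ti:TA\to X$ does not factor through the regular monomorphism $i:A\to X$ (if it did, $\alpha\cdot Ti$ epi would force $i$ to be epi, which is far too strong). Concretely, $t\cdot\alpha\cdot Ti=Ti$ by a direct computation, whereas $Ti\cdot i^*\cdot\alpha\cdot Ti$ can only be compared to $Ti$ by an inequality, since $i^*=\widehat{i^*}\cdot e_X$ and $e_X\cdot\alpha$ is merely comparable to, not equal to, $\id_{TX}$; so there is nothing for the cancellation to act on. The missing ingredient is the unconditional identity
\[
  \widehat{i^*}\cdot t\cdot\alpha\cdot Ti
  =\widehat{i^*}\cdot m_X\cdot Tt\cdot Ti
  =\widehat{i^*}\cdot m_X\cdot Te_X\cdot Ti
  =\widehat{i^*}\cdot Ti=\id_{TA},
\]
which uses $t\cdot i=e_X\cdot i$ from the equaliser and $i^*\circ i_*=e_A$. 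Writing $u=\alpha\cdot Ti$ and $v=\widehat{i^*}\cdot t$, this says $v\cdot u=\id_{TA}$ always; hence $u\cdot v\cdot u=u$, and if $u$ is epi then $u\cdot v=\id_X$, so $(Ti,\widehat{i^*})$ splits $\widehat{t}$ and $i_*\circ i^*=t$. This retract argument is exactly the paper's proof and is the step you need to supply.
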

\begin{proof}
  For $r:X\modto Y$ in $\catX_\mT$, we write $\widehat{r}:TX\to TY$ for the
  corresponding $\mT$-algebra homomorphism. With the notation of
  Proposition~\ref{prop:char_algebraic}, if $i:A\to X$ is $\mT$-dense with right
  adjoint $i^*$, then $i_*\circ i^*=t$ if and only if
  $\widehat{t}=\widehat{i_*}\cdot\widehat{i^*}=Ti\cdot\widehat{i^*}$ if and only
  if the $\mT$-algebra homomorphisms $Ti:TA\to TX$ and $\widehat{i^*}:TX\to TA$
  split the idempotent $\widehat{t}:TX\to TX$. But since $\widehat{t}:TX\to TX$
  is also split by $\alpha:TX\to X$ and $t:X\to TX$, $i_*\circ i^*=t$ if and
  only if
  \begin{align*}
    \widehat{i^*}\cdot t\cdot\alpha\cdot Ti=\id_{TA} && \text{and} && \alpha\cdot Ti\cdot \widehat{i^*}\cdot t=\id_X.
  \end{align*}
  Furthermore, the first equality is always true:
  \[
    \widehat{i^*}\cdot t\cdot\alpha\cdot Ti=\widehat{i^*}\cdot m_X\cdot Tt\cdot
    Ti=\widehat{i^*}\cdot m_X\cdot Te_X\cdot Ti=\widehat{i^*}\cdot Ti=\id_{TA};
  \]
  therefore the second one holds precisely when $\alpha\cdot Ti$ is an
  epimorphism in $\catX$.
\end{proof}

\begin{examples}\label{exs:3}
  We continue here Examples~\ref{exs:2}.
  \begin{enumerate}
  \item For $\mT=\mD$ being the downset monad on $\POSS$,
    Theorem~\ref{thm:char_algebraic} tells us that a completely distributive
    lattice $L$ is algebraic for $\mD$ if and only if $L$ is totally algebraic,
    that is, if every element is the supremum of all the elements totally below
    it.
  \item We consider now the directed downset monad $\mI=\imonad$ on $\POSS$. In
    this case, a directed cocomplete partially ordered set $X$ is a split
    algebra if and only if it is a domain; in this case the splitting
    $t:X\to IX$ is given by $x\mapsto\{y\in X\mid y\ll x\}$. Moreover, $X$ is
    algebraic if and only if, for every $x\in X$, the set
    $\{y\in X\mid y\ll y\ll x\}$ is directed and has $x$ as supremum; that is,
    if $X$ is algebraic in the sense of domain theory (see \cite{AJ94}).
  \item Let now $X$ be a $\mF$-disconnected continuous lattice. Then the
    elements of $A$ are precisely those elements $x\in X$ where, for all open
    subsets $B\subseteq X$, $x\in\mu(B)$ implies that $x\in B$. Then $X$ is
    algebraic if and only if every $x\in X$ is the largest convergence point
    (with respect to the specialisation order) of a filter $\ff\in Fi[FA]$.
  \item Similarly, an f-space $X$ is a algebraic for the prime filter monad
    $\mFp$ if and only if every $x\in X$ is the largest convergence point (with
    respect to the specialisation order) of a prime filter $\ff\in F_2i[F_2 A]$.
  \end{enumerate}
\end{examples}

\section{Weighted (co)limits and cogenerators}
\label{sec:weight-colim-cogen}

``Cocompleteness almost implies completenes'' is the title of the paper
\cite{AHR89} of Ji\v r\'{\i} Ad\'amek, Horst Herrlich and Ji\v r\'{\i}
Reiterman, as well as the main theme of section 12 of the book~\cite{AHS90}. The
title announces several results giving conditions under which completeness and
cocompleteness are equivalent.  In particular, it is proved (the dual of) that a
complete and wellpowered category with a cogenerator is cocomplete (and
co-wellpowered).

In the setting of order-enriched categories, it is natural to consider
``order-enriched'' limits and colimits, the so-called \df{weighted (co)limits},
or \df{indexed (co)limits}. Thus, the question of knowing when weighted
completeness does imply weighted cocompleteness arises. Here we show that it
happens in the presence of a regular cogenerator.

\begin{remark}\label{r-weighted}
  \begin{enumerate}
  \item We start by recalling the notion of weighted limit (\cite{Kel82}) in the
  order-enriched setting. Let $D:\catD\ra \catX$ and $W:\catD\ra \POSS$ be
  order-enriched functors, with $\catD$ small.  They give rise to the functor
  $\POSS^{\catD}(W,\, \catX(-, D))$ from $\catX^{\op}$ to $\POSS$, where, for
  every $X\in \catX$, $\catX(-,D)(X)$ stands for the functor
  $\catX(X,-)\cdot D:\catD\ra \POSS$.  The \df{limit of $D$ weighted by $W$}, in
  case it exists, is an object $L$ of $\catX$ which represents that functor,
  that is, there is a natural isomorphism
  \begin{equation}\label{iso}
    \catX(-,L) \cong \POSS^{\catD}(W,\,\catX(-,D)).
  \end{equation}
  This is equivalent to say that we have a family of morphisms
  \[
    L\xrightarrow{\;l_d^x\;}Dd, \; d\in \catD,\, x\in Wd
  \]
  satisfying the following conditions:
  \begin{enumerate}
  \item $l_d^x\leq l_d^y$ whenever $x\leq y$, and
    $Dn\cdot l_d^x=l_{d'}^{Wn(x)}$, for all morphisms $n:d\ra d'$ in $\catD$ and
    all $x\in Wd$. (This gives the natural transformation from $W$ to
    $\catX(L, D)$ which is the image of $\id_L$ by the component of the natural
    transformation indexed by $L$.)
  \item The family $\left(l_d^x\right)_{d\in \catD, x\in Wd}$ is universal,
    i.e., the natural transformation \eqref{iso} is a natural isomorphism. This
    means that every family of morphisms
    $A\xrightarrow{a_d^x}Dd, \; {d\in \catD}$, $x\in Wd$, satisfying (a) --
    with $A$ and $a$ in the place of $L$ and $l$ -- is of the form
    $a_d^x=l_d^xt$ for a unique $t:A\ra L$; and, moreover, for $t,t':A\ra L$,
    the inequality $l_d^xt\leq l_d^x t'$, for all $d$ and $x$, imply $t\leq t'$.
  \end{enumerate}
  When $W$ is just the constant functor into a singleton, we speak of
  \df{conical limits}. Thus, a conical limit is a limit in the ordinary sense
  whose projections are jointly order-monic.

  Inserters and cotensor products are special types of weighted limts.  The
  \df{inserter} of a pair of morphisms $f,g:X\ra Y$ is just a morphism
  $i:I\ra X$ with $fi\leq gi$ and universal with respect to that property (in
  the sense of (b) above). Given a poset $I$ and an object $X$ of $\catX$, the
  \df{cotensor product} of $I$ and $X$, denoted by $\cote(I,X)$, is a weighted
  limit with the domain $\catD$ of the functors $D$ and $W$ the unit category,
  i.e., the category with just an object and the corresponding identity
  morphism. Thus, the projections of the cotensor product are of the form
  \[
    \xymatrix{\cote(I,X)\ar[rr]^{l^i}&&X},\; i\in I,
  \]
  with $l_i\leq l_{j}$ for $i\leq j$.

  In an order-enriched category, the existence of conical products and inserters
  guarantees the existence of all weighted limits.

  The dual notions for weighted limits, inserters and cotensor products are,
  respectively, weighted colimits, coinserters and tensor products.

  \item For every Kock-Z\"{o}berlein monad $\mT$ over a category $\catX$ with
  weighted limits, the subcategory $\xupt$ is closed under them (since the
  forgetful functor from $\xupt$ to $\catX$ creates weighted limits). Indeed, as
  shown in \cite{ASV15}, more than being closed under weighted limits, the
  subcategory $\xupt$ is also an inserter-ideal. This means that, for every
  diagram
  \[
    \xymatrix{I\ar[r]^i &A\ar@<1ex>[r]^g\ar@<-1ex>[r]_f&B}
  \]
  with $i$ the inserter of the pair $(f,g)$ in $\catX$, if $f$ is a morphism of
  $\xupt$, then $i:I\ra A$ lies in $\xupt$ too.
  \end{enumerate}
\end{remark}

\begin{remark}\label{r-cogen}
  We also use the ``order-enriched'' version of the notion of cogenerator. In
  this paper, an object $S$ of an order-enriched category is said to be a
  \df{cogenerator} if it detects the order, in the sense that, for every pair of
  morphisms $f,g:X\ra Y$, $f\leq g$ iff $tf\leq tg$ for all morphisms
  $t:Y\ra S$. It follows easily from the definition that a strong cogenerator in
  the sense of 3.6 of \cite{Kel82} is a cogenerator in our sense, provided that
  the category has coinserters. Next we give the notion of \emph{regular
    cogenerator}. (Co)generators in this sense were considered for instance in
  \cite{KV16a}.
\end{remark}

\begin{remark}\label{r-reg_cogen}
\begin{enumerate}
  \item We recall that an order-enriched adjunction between order-enriched
  categories is an adjunction $F\dashv U:\catA\ra \catB$ with $U$ and $F$
  order-enriched, and for which there exists a natural isomorphism between the
  functors $\catB(-,U-)$ and $\catA(F-,-)$ from $\catB\times \catA$ to
  $\POSS$. This is equivalent to say that we have an adjunction
  $F\dashv U:\catA\ra \catB$ with $U$ order-enriched, and the unit $\eta$
  satisfies the property that any inequality of the form
  $Uf\cdot \eta_X\leq Ug\cdot \eta X$, for $f$ e $g$ with common domain and
  codomain, implies $f\leq g$ (\cite{CS15}). Clearly, an order-enriched
  adjunction induces an order-enriched monad; and, for an order-enriched monad
  $\mT$, the adjunctions $F^{\mT}\dashv U^{\mT}$ and $F_{\mT}\dashv U_{\mT}$ are
  order-enriched.

  \vskip1mm

  \item  In an order-enriched category $\catA$ with weighted limits, given an
  object $S$, the cotensor product yields a functor
  \begin{equation}\label{cote}
    \cote(-, S):\POSS\To \catA^{\op}
  \end{equation}
  which is an order-enriched left adjoint of $\catA(-,S)$. For every
  $X\in \catA$, the counit map is given by (the dual of) the morphism $n_X$
  determined by the universality of the cotensor product:
  \begin{equation}\label{eq4.1}
    \xymatrix{
      X\ar[rr]^{n_X\qquad \quad  }\ar[ddr]_{f} & & \cote(\catA(X,S),\s)\ar[ddl]^{\pi_f} & \\
      & & & \hspace{-1cm} f\in \catA(X,S)\\
      & \s & &
    }
  \end{equation}
  Given $X\in \catA$, put
  \[
    \hat{X} = \cote(\catA(X,S),S)
  \]
  and consider the cotensor product
  \[
    \cote(\catA(\hat{X},S),S)
    \xrightarrow{\hspace{0.4cm}\hat{\pi}_g\hspace{0.4cm}} \s, \hspace{0.1cm}
    \;\; g\in \catA(\hat{X},S).
  \]
  Let $\beta: \cote(\catA(X,\s),S) \To \cote(\catA(\hat{X},S),S)$ be
  the unique morphism of $\catA$ which makes the following diagrams commutative:
  \[
    \xymatrix{
      \hat{X}\ar[rr]^{\beta\qquad \quad }\ar[ddr]_{\pi_{g\cdot n_X}} & & \cote(\catA(\hat{X},S),S)\ar[ddl]^{\hat{\pi}_g} &\\
      & & & \hspace{-0.5cm} g\in \catA(\hat{X},S).\\
      & \s & & }
  \]
  Thus, putting $\alpha=n_{\hat{X}}$, we have the diagram
  \begin{equation}\label{eq4.2.0}
    \xymatrix{
      X\ar[r]^{n_X}
      & \hat{X}\ar@<3pt>[r]^-{\beta}\ar@<-3pt>[r]_-{\alpha}
      & \cote(\catA(\hat{X},S),S).
    }
  \end{equation}
\end{enumerate}
\end{remark}

\begin{definition}
  Let $\catA$ be an order-enriched category with weighted limits. An object $S$
  of $\catA$ is said to be a \df{regular cogenerator} if the the diagram
  \eqref{eq4.2.0} is an equaliser.
\end{definition}
If $\catA$ has weighted limits, every equaliser of $\catA$ is conical; hence, it
is immediate that every regular cogenerator detects the order, so, in
particular, it is a cogenerator. In other words, it detects not only equality
between pairs of morphisms, as in the ordinary case, but also inequality.

\begin{theorem}\label{com->cocom}
  Every order-enriched category with weighted limits and a regular cogenerator
  has weighted colimits.
\end{theorem}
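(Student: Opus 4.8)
The plan is to mimic the classical ``cocompleteness almost implies completeness'' argument, transported to the order-enriched setting and exploiting the special shape of the equaliser presentation \eqref{eq4.2.0} of a regular cogenerator. Recall that in an order-enriched category the existence of all weighted colimits follows from the existence of conical coproducts and coinserters (the dual of the last sentence of part (1) of Remark~\ref{r-weighted}); so it suffices to construct these two kinds of colimits. Throughout, write $S$ for the regular cogenerator and, for an object $X$, $\hat X=\cote(\catA(X,S),S)$, so that the functor $\cote(-,S)\colon\POSS\to\catA^{\op}$ is an order-enriched left adjoint of $\catA(-,S)\colon\catA^{\op}\to\POSS$ (Remark~\ref{r-reg_cogen}(2)); since $\catA$ has weighted limits, $\POSS$ being weighted-cocomplete means $\catA^{\op}$ has all weighted limits built from $\POSS$ via $\catA(-,S)$ ``up to'' the reflection $\cote(-,S)$.

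First I would handle \emph{conical coproducts}. Given a small family $(X_i)_{i\in I}$, apply $\catA(-,S)$ to get a family of posets $(\catA(X_i,S))_{i\in I}$, form their product $P=\prod_i\catA(X_i,S)$ in $\POSS$, and set $C_0=\cote(P,S)$. The projections $P\to\catA(X_i,S)$ yield, under $\cote(-,S)$, morphisms $X_i\to C_0$ (using the counit $n_{X_i}$ and naturality), exhibiting $C_0$ as a ``weak'' coproduct that is universal for \emph{cones into $S$-cotensors}; but a general object need not be an $S$-cotensor. This is where the regular-cogenerator equaliser \eqref{eq4.2.0} enters: every object $X$ sits in an equaliser $X\rightarrowtail\hat X\rightrightarrows\cote(\catA(\hat X,S),S)$, i.e.\ $X$ is a weighted \emph{limit} (an equaliser, hence a conical one) of cotensor objects. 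Dually, I would build the coproduct of the $X_i$ as a suitable coequaliser/coinserter of a parallel pair between two objects of the form $\cote(-,S)$, obtained by applying $\cote(-,S)$ to the parallel pair of products in $\POSS$ that presents $\prod_i\catA(X_i,S)$ compatibly with the equaliser data for each $X_i$. Because $\cote(-,S)$ is a left adjoint $\POSS\to\catA^{\op}$, it sends the relevant colimits in $\POSS$ (which exist, $\POSS$ being order-cocomplete) to limits in $\catA$, and the universal property of \eqref{eq4.2.0} lets one check that the resulting object genuinely represents $\prod_i\catA(-,S)$ on all of $\catA$, not just on cotensor objects.

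Next, \emph{coinserters}. Given $f,g\colon X\to Y$, apply $\catA(-,S)$ to obtain $\catA(g,S),\catA(f,S)\colon\catA(Y,S)\to\catA(X,S)$ in $\POSS$ and form their \emph{inserter} $j\colon J\to\catA(Y,S)$ in $\POSS$ (inserters exist in $\POSS$). Applying $\cote(-,S)$ produces a coinserter candidate $\cote(\catA(Y,S),S)\to\cote(J,S)$ in $\catA$; precomposing with $n_Y\colon Y\to\hat Y=\cote(\catA(Y,S),S)$ gives a morphism $q\colon Y\to\cote(J,S)$, and the construction forces $qf\le qg$. To see $q$ is \emph{universal} with this property I would again invoke the equaliser presentation \eqref{eq4.2.0} of an arbitrary test object $Z$: a morphism $h\colon Y\to Z$ with $hf\le hg$ yields, via $\catA(-,S)$ and the universal property of the inserter $J$ in $\POSS$, a factorisation through $J$, and then \eqref{eq4.2.0} for $Z$ promotes this to a unique factorisation $q\to\cote(J,S)\to Z$ in $\catA$ that also reflects the order (again because \eqref{eq4.2.0} is a \emph{conical} equaliser, so $S$-morphisms detect inequality). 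The two halves of the inserter universal property in the definition of weighted limit/colimit (existence-and-uniqueness of the factorisation, plus order-detection) correspond exactly to the equaliser's being conical.

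The main obstacle is the step of passing from ``universal among cotensor objects'' to ``universal among all objects'': the functor $\cote(-,S)\colon\POSS\to\catA^{\op}$ is fully faithful only on its image, so a colimit constructed inside that image must be shown to satisfy the universal property against arbitrary objects, and this is precisely what the regular-cogenerator hypothesis is for. The clean way to organise it is to note that \eqref{eq4.2.0} exhibits every object as an (absolute-ish, at least conical) limit of cotensor objects, then use that the purported coproduct/coinserter already has the correct universal property against cotensor objects, and finally transport along the equaliser \eqref{eq4.2.0} using its \emph{order-reflecting} universality. One must be careful that all diagrams chased are order-enriched (the weights live in $\POSS$) and that coinserters, not mere coequalisers, are produced, so that the ``$f\le g$'' half of the universal property is verified and not just ``$f=g$''; this is where the order-enriched refinements of the classical proof genuinely bite.
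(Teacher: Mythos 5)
Your overall strategy --- reduce to conical coproducts and coinserters and exploit the adjunction $\cote(-,S)\dashv\catA(-,S)$ together with the equaliser presentation \eqref{eq4.2.0} --- points in the right general direction, but it is not the paper's argument and its execution has a genuine gap. The paper does not build colimits by hand: it forms the monad $\mT$ on $\POSS$ induced by $\cote(-,S)\dashv\catA(-,S)$, observes that the regular-cogenerator condition says precisely that the counit of this adjunction is pointwise the coequaliser of the canonical pair, concludes (by the standard criterion, plus a check that everything is order-enriched) that the comparison functor $K:\catA^{\op}\to\POSS^{\mT}$ is a fully faithful, order-reflecting right adjoint, and then transfers weighted completeness from $\POSS^{\mT}$ to its reflective subcategory $\catA^{\op}$.

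The gap in your construction is the claim that $C_0=\cote\bigl(\prod_i\catA(X_i,S),S\bigr)$ is ``universal for cocones into $S$-cotensors''. Existence of a factorisation holds, but uniqueness fails: a morphism $C_0\to\cote(Q,S)$ corresponds to a monotone map $Q\to\catA(C_0,S)$, and two such morphisms agree after precomposition with the injections $\iota_i$ iff the corresponding maps agree after postcomposition with the canonical comparison $\catA(C_0,S)\to\prod_i\catA(X_i,S)$, which is a split epimorphism but in general not monic. Already for $\catA=\POSS$, $S$ the two-element chain and two singletons $X_1,X_2$, the object $C_0$ is the six-element lattice $\POSS(2\times 2,2)$ while the coproduct is the two-element antichain, and a monotone map out of $C_0$ is certainly not determined by its values at the two injected points. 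Your proposed repair --- realising the coproduct as ``a suitable coequaliser/coinserter of a parallel pair'' --- is circular as stated, because coequalisers and coinserters in $\catA$ are among the weighted colimits whose existence is to be proved; to avoid the circle one must realise the coproduct as an \emph{equaliser} in $\catA$ of an explicitly constructed pair between cotensors (an analogue of \eqref{eq4.2.0} for the not-yet-existing coproduct) and then verify its universal property against arbitrary test objects, which amounts to constructing by hand the left adjoint to the comparison functor; none of these verifications are supplied. The coinserter step has the same defect: the morphism $\cote(J,S)\to\hat Z$ obtained from the inserter in $\POSS$ is not shown to equalise the pair $\hat Z\rightrightarrows\cote(\catA(\hat Z,S),S)$, so it does not obviously factor through $Z$.
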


\begin{proof}
  Let $\catA$ be an order-enriched category with weighted limits and a regular
  cogenerator $S$. Then, as seen in Remark~\ref{r-reg_cogen}, the functors
  \[
    \xymatrix{ \catA^{\op}\ar@<2pt>[rr]^{\catA(-,S)}& &
      \POSS\ar@<2pt>[ll]^{\cote(-,S)} }
  \]
  form an order-enriched adjunction.  Let $\mT$ be the corresponding monad and
  let $K: \catA^{\op}\to \POSS^{\mT}$ be the comparison functor:
  \[
    \xymatrix{
      \catA^{\op}\ar[rr]^{K}\ar[dr]_{\catA(-,S)} & & \POSS^{\mT}\ar[dl]\\
      & \POSS & }
  \]
  Since $S$ is a regular cogenerator, the morphism $n_{X}^{\op}$, which is,
  pointwisely, the counit of the adjunction $\cote(-,S)\dashv \catA(-,S)$,
  is a regular epimorphism, and, consequently, $K$ is a full and faithful right
  adjoint.  Moreover, this adjunction is order-enriched, as it is explained in
  the next paragraph.
	
  Let $F\dashv U:\catC\to \catB$ be an order-enriched adjunction with the counit
  being pointwisely a conical coequaliser, and $\catC$ having conical
  coequalisers.  It is well-known that, under these conditions, the comparison
  functor $K$ is a full and faithful right adjoint \cite{MS04}. It is clear that
  $K$ is order-enriched. Then, in order to conclude that the adjunction
  $K:\catC\to \catB^{\mT}$ is order-enriched, it suffices to show that, for
  every universal map $\eta_{(X,\xi)}^{\mT}:(X,\xi)\to KA$ of the adjunction,
  and every pair $f,g:A\to B$ of morphisms in $\catC$ with
  $Kf\cdot\eta_{(X,\xi)}^{\mT} \leq Kg.\eta_{(X,\xi)}^{\mT}$, we have $f\leq g$
  (see Remark \ref{r-reg_cogen}.1). Recall that, given $(X,\xi)\in \catB^{\mT}$,
  the universal map from $(X,\xi)$ to $K$ is obtained as follows: take the
  coequaliser $c: FX\to A$ of the pair
  \[
    \xymatrix{ FUFX\ar@<3pt>[r]^{\eps_{FX}}\ar@<-3pt>[r]_{F\xi} & FX }
  \]
  where $\eps$ is the counit of the adjunction $F\dashv U$.  Then
  $Uc\cdot UF\xi = Uc\cdot U\eps_{FX}$. But
  $\xi = \text{coeq}(U\eps_{FX},UF\xi)$; hence, there is a unique
  $\theta: X\to UA$ making the following triangle commutative:
  \[
    \xymatrix{
      UFX\ar[r]^{\xi}\ar[dr]_{Uc} & X\ar[d]^{\theta} \\
      & UA }
  \]
  and it holds $\theta = Uc.\eta_{X}$. It is known that
  \[
    \eta_{(X,\xi)}^{\mT} = \theta
  \]
  and, for every $g: (X,\xi)\to KB$ in $\catB^{\mT}$, the unique
  $\bar{g}:A\to B$ in $\catC$ making the triangle
  \[
    \xymatrix{
      (X,\xi)\ar[r]^{\theta}\ar[d]_{g} & KA\ar[dl]^{K\bar{g}} \\
      KB }
  \]
  commutative is characterised by the equality
  \[
    \bar{g}\cdot c = \eps_{B}\cdot Fg.
  \]
  We show that, given two morphisms $g,h:(X,\xi)\to KB$ with $g\leq h$ then
  $\bar{g}\leq \bar{h}$.  Since $F$ is order-enriched, the inequality $g\leq h$
  implies $\eps_{B}\cdot Fg \leq \eps_{B}\cdot Fh$. But then
  \[
    \bar{g}\cdot c = \eps_{B}\cdot Fg \leq \eps_{B}\cdot Fh = \bar{h}\cdot c
  \]
  and, since $c$ is order-epic (because it is a conical coequaliser),
  $\bar{g}\leq \bar{h}$.

  \vskip0.5mm
	
  Now we have that, for our comparison functor $K: \catA^{\op}\to \POSS^{\mT}$,
	
  \begin{itemize}
  \item $K$ is the right adjoint of an order-enriched adjunction;
  \item $K$ is full and faithful, and it is full with respect to the order, that
    is, given a pair of morphisms
    $\xymatrix{\nonumber\ar@<-3pt>[r]_{f}\ar@<3pt>[r]^{g} & }$ in $\catA^{\op}$,
    $f\leq g$ in $\catA^{\op}$ iff $Kf\leq Kg$ in $\POSS^{\mT}$.
  \end{itemize}
	
  Consequently, since $\POSS^{\mT}$ has weighted limits, also $\catA^{\op}$ has
  weighted limits, and the weighted limits in $\catA^{\op}$ are constructed, up
  to isomorphism, as in $\POSS^{\mT}$. (This can be easily proved in a way
  analogous to the one of the ordinary case.)  That is, $\catA$ has weighted
  colimits.
\end{proof}

In the next section we apply this theorem to the categories $\ALat$ of algebraic
lattices with maps which preserve directed suprema and all infima, the category
$\ADom$ of bounded complete algebraic domains with maps which preserve directed
suprema and all non-empty infima, and the category $\SPEC$ of spectral
topological spaces and spectral maps.

\section{(Co)completeness of subcategories of $\xupt$}
\label{sec:cocompl-subc-xupt}

In this and the next section we work within the category $\TOP_0$ of $T_0$
topological spaces and continuous maps. We consider the relation $\leq$ in a
space to be the specialisation order, and we use also the symbol $\leq$ to refer
to the corresponding order induced in the hom-sets of $\TOP_0$. We do this in
order to fit our terminology on continuous domains and lattices with
\cite{GHK+03}. Thus, as mentioned already in Examples \ref{exs:1}, the open
filter monads are KZ with respect to $\geq$.

The category $\TOP_0$ has weighted limits, since its ordinary limits are
conical, and the inserter of a pair $(f,g)$ of morphisms with domain in $X$ is
just the subspace of all $x\in X$ with $f(x)\leq g(x)$. Therefore, for every
{\KZ} monad $\mT$ over $\catX=\TOP_0$ the corresponding Eilenberg-Moore category
$\xupt$ is closed under weighted limits in $\TOP_0$ (since the forgetful functor
from $\xupt$ to $\TOP_0$ creates limits).  Hence, the cotensor product yields
the functor
\[
  \cote(-, \s):\POSS\To \left(\xupt\right)^{\op}.
\]
This functor is defined as in \eqref{cote} of the previous section with $\s$
denoting the Sierpi\'nski space. And we can consider the diagram defined as in
\eqref{eq4.2.0}:
\begin{equation}\label{eq4.2}
  \xymatrix{
    X\ar[r]^{n_X} & \hat{X}\ar@<3pt>[r]^{\hspace{-1.2cm}\beta}\ar@<-3pt>[r]_{\hspace{-1.2cm}\alpha} & \cote(\Hhom(\hat{X},\s),\s)
  }
\end{equation}
where $\Hhom$ refers to hom-posets of $\xupt$.  Let
\[
  \catX_{\alg}
\]
denote the full subcategory of $\xupt$ for which the diagram \eqref{eq4.2} is an
equaliser in $\TOP_0$, then also in $\xupt$.

We are going to show that, concerning the filter, the proper filter and the
prime filter monads, the subcategories $\catX_{\alg}$ are well-known categories,
namely: the category $\ALat$ of algebraic lattices with maps which preserve
directed suprema and all infima, the category $\ADom$ of bounded complete
algebraic domains with maps which preserve directed suprema and all non-empty
infima, and the category $\SPEC$ of spectral topological spaces and spectral
maps (see Definition \ref{def:domains}).  We show that all of them are closed
under weighted limits. Hence, the equaliser diagram \eqref{eq4.2} tells us that
the Sierpi\'nski space is a regular cogenerator of $\catX_{\alg}$. Moreover, it
allows us to conclude that:
\begin{enumerate}
\item[(1)] $\catX_{\alg}$ is the closure under weighted limits of $\xdot$ in
  $\xupt$, and in $\TOP_0$ (Corollary \ref{cor4.4});
\item[(2)] $\catX_{\alg}$ has weighted colimits (Corollary \ref{cor4.6}).
\end{enumerate}

We start by establishing the closedness under weighted limits:

\begin{proposition}\label{prop4.1}
  Every one of the three categories, $\ALat$, $\ADom$ and $\SPEC$, is closed
  under weighted limits in $\TOP_0$.
\end{proposition}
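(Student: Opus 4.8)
The plan is to treat the three categories uniformly whenever possible and only branch when the specific order-theoretic structure forces it. In each case the ambient facts are: the forgetful functor from the relevant Eilenberg--Moore category $\xupt$ to $\TOP_0$ creates weighted limits (Remark~\ref{r-weighted}.2), so $\ALat$, $\ADom$ and $\SPEC$ sit inside $\xupt$ for $\mF$, $\mFO$ and $\mFp$ respectively; hence it suffices to check that each of the three full subcategories is closed under weighted limits \emph{inside} the corresponding $\xupt$. Since in an order-enriched category weighted limits are built from conical products and inserters (Remark~\ref{r-weighted}.1), I would reduce to two verifications for each category: closure under conical (= jointly order-monic) products, and closure under inserters. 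Closure under cotensors $\cote(I,X)$ is subsumed, but I may treat it explicitly as a sanity check since $\cote(I,X)$ is an $I$-indexed ``power'' which is visibly an algebraic lattice / bounded complete algebraic domain / spectral space when $X$ is.

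For \textbf{conical products} in $\ALat$ and $\ADom$: a product in $\TOP_0$ of $T_0$-spaces whose specialisation orders are directed complete (resp.\ bounded complete) is again such, computed pointwise, and directed suprema and (non-empty) infima are pointwise; the compact elements of a product of algebraic domains are exactly the tuples that are compact in each coordinate and equal to $\bot$ in all but finitely many coordinates, and these are easily seen to approximate every element, so the product is again algebraic. For $\SPEC$: a product of spectral spaces is stably compact and its compact open sets are generated by boxes $\prod K_i$ with $K_i$ compact open and $K_i = X_i$ for all but finitely many $i$; these form a basis, so the product is spectral — alternatively, invoke Definition~\ref{def:domains}(7), that $\SPEC$ is closed under initial cones with respect to $\STCOMP\to\SET$, together with the fact that a product cone is initial. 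For \textbf{inserters}: the inserter of $f,g\colon X\to Y$ in $\TOP_0$ is the subspace $I=\{x\in X\mid f(x)\le g(x)\}$. Here I would use that $f,g$ are homomorphisms, hence in particular preserve directed suprema (and, in the $\ALat/\ADom$ case, the relevant infima); one checks that $I$ is closed in $X$ under directed suprema and under the relevant infima, so $I$ inherits directed/bounded completeness, and — the key point — that a compact element of $X$ lying in $I$ is still compact in $I$ and that these approximate, giving algebraicity of $I$. For $\SPEC$ one instead observes that $I$ is an equaliser-type subspace cut out by an inequality and is again patch-closed, hence stably compact, and that the inclusion $I\hookrightarrow X$ reflects compact open sets appropriately; again the cleanest route is that $\xupt$ for $\mFp$ is an inserter-ideal (Remark~\ref{r-weighted}.2) combined with the initial-cone characterisation of spectral spaces.

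The \textbf{main obstacle} I anticipate is the algebraicity (rather than mere continuity / directed completeness) of inserters: one must show that restricting along the subspace inclusion $i\colon I\hookrightarrow X$ does not destroy the supply of compact elements needed to approximate points of $I$. Concretely, if $a\in I$ and $a=\bigvee^{\upc}\{c\le a\mid c\ll_X c\}$ in $X$, the directed set of compacts need not lie in $I$; the fix is to use that $I$ is downward closed enough, or to pass to $i^{*}$-type right adjoints and argue that $f(c)\le g(c)$ holds for the relevant $c$ because $f,g$ agree ``below'' $a$ in a suitable sense, or simply that $I$ being an inserter it is in fact a \emph{Scott-closed}-type subset so compacts of $X$ inside $I$ are compact in $I$ and cofinal below each point. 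I expect this to be a short but slightly delicate lemma, and I would isolate it rather than repeat the argument three times. The spectral case is the easiest if one leans on the external characterisation via initial cones into $\SET$ from $\STCOMP$ (Definition~\ref{def:domains}(7)) together with the inserter-ideal property; the lattice and bounded-complete cases are the ones where the hands-on order-theoretic verification is unavoidable.
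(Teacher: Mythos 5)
Your overall architecture is the same as the paper's: reduce closure under weighted limits to closure under conical products and inserters inside the relevant $\xupt$, handle $\SPEC$ via the initial-cone/reflectivity characterisation of Definition~\ref{def:domains}(\ref{item:stablycompact}), and do the order-theoretic work for $\ALat$ and $\ADom$ by hand. The paper makes one further simplification you miss: since inserters in $\TOP_0$ (hence in $\xupt$) are topological embeddings, it suffices to prove closure under products and under \emph{arbitrary} topological-embedding subobjects in $\xupt$, and for $\ALat$ and $\ADom$ this is then exactly Proposition~I-4.12 and Corollary~I-4.14 of \cite{GHK+03}, because an embedding in $\xupt$ is by definition an inclusion preserving directed suprema and (non-empty) infima.

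The genuine gap is in your treatment of the step you yourself flag as the obstacle. Two of your proposed fixes are false: the inserter $I=\{x\mid f(x)\le g(x)\}$ is \emph{not} downward closed in general, and the compact elements of $X$ that happen to lie in $I$ are \emph{not} in general cofinal below the points of $I$ (so $I$ is not a ``Scott-closed-type'' subset, and its compact elements are typically new elements, not compacts of $X$). The correct argument — the content of the cited results of \cite{GHK+03} — is the one you only gesture at: since $f$ and $g$ preserve directed suprema and all (resp.\ non-empty) infima, $I$ is closed in $X$ under directed suprema and under all (resp.\ non-empty) infima; closure under infima (together with $\top\in I$ in the $\ALat$ case) gives a left adjoint $p\colon X\to I$, $p(x)=\bigwedge\{y\in I\mid x\le y\}$, to the inclusion; because the inclusion preserves directed suprema, $p$ preserves the way-below relation and hence compactness; and then for $a\in I$ one writes $a=p(a)=\bigvee^{\upc}\{p(c)\mid c\ll c\le a \text{ in } X\}$, exhibiting $a$ as a directed supremum of compacts \emph{of $I$}. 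Without this (or an explicit appeal to Proposition~I-4.12 / Corollary~I-4.14 of \cite{GHK+03}), the algebraicity of inserters — and of embedding subobjects generally — is not established, and the routes you sketch most concretely would fail.
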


\begin{proof}
  Let $\mT$ be a {\KZ} monad over $\TOP_0$; then $\xupt$ is closed under
  weighted limits in $\TOP_0$.  Inserters in $\TOP_0$ are topological
  embeddings, then also order embeddings. Thus, the same happens in $\xupt$.
	
  Let now $\mathcal{K}$ be a full subcategory of $\xupt$. Then, in order to
  ensure that $\mathcal{K}$ is closed under weighted limits in $\TOP_0$, it
  suffices to show that $\mathcal{K}$ is closed in $\xupt$ under
	
  \begin{itemize}
  \item (conical) products, and
  \item topological embedding subobjects, i.e., for every topological embedding
    \newline $m:X\hookrightarrow Y$ in $\xupt$ with $Y$ in $\mathcal{K}$, also
    $X$ belongs to $\mathcal{K}$.
  \end{itemize}
	
  \noindent Since for the filter and the proper filter monads the morphisms of
  $\xupt$ are the maps preserving directed suprema and infima (respectively,
  non-empty infima), the closedness under products and topological embedding
  subobjects of $\ALat$ and $\ADom$ in the corresponding category $\xupt$ is
  just Proposition I-4.12 and Corollary I-4.14 of \cite{GHK+03}.

  Concerning the category $\SPEC$, we observed already in
  Definition~\ref{def:domains}~(\ref{item:stablycompact}) and
  Example~\ref{exs:1}(\ref{item:StablyComp}) that $\SPEC$ is a reflecive full
  subcategory of $\xupt\simeq\STCOMP$ since it is closed in it under initial
  cones. In particular, it is closed under products and embeddings.
\end{proof}

Next we show that the Sierpi\'nski space $\s$ is a regular cogenerator for each
one of the three categories, $\ALat$, $\ADom$ and $\SPEC$. For that, we first
prove Lemma \ref{lem4.2} below, where we present a common feature of the three
categories, which gives the means for the proof of Theorem \ref{theo4.3}.

Before stating that lemma, we describe the morphism $n_X:X\To \hat{X}$, defined
in \eqref{eq4.1}, in any full subcategory $\catA$ of $\xupt$ closed under
weighted limits and containing the Sierpinski space $\s$.  Given $X\in \catA$,
let
\[
  \Lambda X = \{U\in \Omega X \mid \mathcal{X}_U : X \To \s \text{ is a morphism
    of } \catA\}.
\]
Then $\hat{X}=\cote(\Hhom(X,\s),\s)$ consists of all families
$(z_U)_{U\in \Lambda X}$ in the product $\s^{\Lambda X}$ with the property
$U\subseteq V\Rightarrow z_U\leq z_V$, and
$n_X(x)=(\chi_U(x))_{U\in \Lambda X}$.  The topology of $\hat{X}$ is just the
one induced by the product topology. Thus, it is generated by the sub-base of
all sets
\[
  \Diamond U=\pi^{-1}_{\chi_U}(\{1\})=\{(z_U)_{U\in \Lambda X}\mid z_U=1\},\;
  U\in \Lambda X,
\]
and we have $U=n_X^{-1}(\Diamond U)$. Moreover, since the projections
$\pi_{\chi_{_U}}$ belong to $\Hhom(X,\s)$, the sets $\Diamond U$ belong to
$\Lambda \hat{X}$.

\begin{lemma}\label{lem4.2}
  Let $\catA$ be one of the categories $\ALat$, $\ADom$ or $\SPEC$. Then $\catA$
  satisfies the following conditions:
  \begin{tfae}
  \item The spaces of $\catA$ are sober and $\s \in \catA$.
  \item $\catA$ is closed under weighted limits in $\TOP_0$.
  \item For every $X \in \catA$, the set $\Lambda X$ is closed under
    finite intersections (in particular, contains $X$) and forms a base of the
    topology $\Omega X$.
  \item For every $X\in \catA$, the morphism $n_X : X\to \hat{X}$ has the
    following property, for every family $V_i$, $i\in I$, of sets of
    $\Lambda X$:
    \[
      \text{If } H=\bigcup_{i\in I}V_i \in \Lambda X, \text{ then } H =
      n^{-1}_X(H'), \text{ for some $H' \in \Lambda \hat{X}$ with
        $H'\subseteq \bigcup_{i\in I}\Diamond V_i$}.
    \]
  \end{tfae}
\end{lemma}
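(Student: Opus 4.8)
The plan is to treat the four assertions in order, the crux being an explicit description of the sets $\Lambda Y$ for the objects $Y$ of each category; once this is available, (iii) and (iv) reduce to routine bookkeeping. Assertion (ii) is exactly Proposition~\ref{prop4.1}. For (i), note that an algebraic lattice and a bounded complete algebraic domain, each carrying its Scott topology, is in particular a continuous dcpo, hence sober; and a spectral space is stably compact, so sober by definition. The Sierpi\'nski space $\s$ is the two-element chain $\{0<1\}$ with $\{1\}$ open, which is at once an algebraic lattice, a bounded complete algebraic domain and a spectral space, so $\s$ lies in each of $\ALat$, $\ADom$ and $\SPEC$.

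To describe $\Lambda Y$, write $\chi_U\colon Y\to\s$ for the continuous map attached to an open set $U\subseteq Y$ (value $1$ on $U$). For $\catA=\ALat$ (resp.\ $\ADom$), $\chi_U$ is a morphism of $\catA$ precisely when $U$ is Scott-open and $\chi_U$ preserves all (resp.\ all non-empty) infima; a direct computation — using that $\chi_U(\bigwedge S)\le\bigwedge_{y\in S}\chi_U(y)$ always holds, that equality for $S=U$ forces $\bigwedge U\in U$ (so that $U$ is the principal up-set $\upc k=\{y\in Y\mid k\le y\}$ of some $k$, necessarily compact since $\upc k$ is Scott-open), and that equality for $S=\varnothing$ in the $\ALat$-case forces $\top\in U$ — shows that $\Lambda Y=\{\upc k\mid k\text{ a compact element of }Y\}$ for $\ALat$, and the same set enlarged by $\varnothing$ for $\ADom$. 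For $\catA=\SPEC$, the compact saturated subsets of $\s$ are $\varnothing$, $\{1\}$ and $\s$, and $Y$ is compact (being stably compact), so $\chi_U$ is spectral iff $U$ is a compact open subset of $Y$; hence $\Lambda Y$ is the family of compact open subsets of $Y$.

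From these descriptions, (iii) is immediate: $Y\in\Lambda Y$ (with $Y=\upc\bot$, $\bot$ compact, in the first two cases, and $Y$ compact open in the third); $\Lambda Y$ is closed under finite intersections because $\upc k\cap\upc k'=\upc(k\vee k')$ with $k\vee k'$ compact whenever it exists — for $\ADom$ one invokes bounded completeness, and the intersection is $\varnothing\in\Lambda Y$ when $k,k'$ have no common upper bound — while for $\SPEC$ finite intersections of compact saturated sets are compact in a stably compact space; and $\Lambda Y$ is a base of $\Omega Y$, which for $\ALat$ and $\ADom$ is the standard fact that the sets $\upc k$ ($k$ compact) form a base of the Scott topology of an algebraic domain, and for $\SPEC$ is the definition of a spectral space.

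For (iv), recall that $\catA$ is closed under weighted limits in $\TOP_0$ and contains $\s$ (by (i) and (ii)), so $\hat X=\cote(\Hhom(X,\s),\s)$ again lies in $\catA$, $\Lambda\hat X$ admits the description above, and $\Diamond V\in\Lambda\hat X$ with $n_X^{-1}(\Diamond V)=V$ for every $V\in\Lambda X$. Now let $V_i\in\Lambda X$ ($i\in I$) with $H=\bigcup_{i\in I}V_i\in\Lambda X$. If $\catA\in\{\ALat,\ADom\}$ and $H=\varnothing$ (only possible for $\ADom$), take $H'=\varnothing\in\Lambda\hat X$; otherwise $H=\upc h$ with $h$ compact, and picking $i_0$ with $h\in V_{i_0}$ we have $V_{i_0}=\upc v_{i_0}$ with $v_{i_0}\le h$ (since $h\in\upc v_{i_0}$) and $h\le v_{i_0}$ (since $v_{i_0}\in\upc v_{i_0}\subseteq\upc h$), so $H=V_{i_0}$ and we set $H'=\Diamond V_{i_0}$. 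If $\catA=\SPEC$, then $H$ is compact and covered by the opens $V_i$, so $H=\bigcup_{i\in J}V_i$ for a finite $J\subseteq I$; since each $\Diamond V_i$ is a compact open subset of $\hat X$, the finite union $H'=\bigcup_{i\in J}\Diamond V_i$ is compact open, hence in $\Lambda\hat X$. In every case $n_X^{-1}(H')=H$ and $H'\subseteq\bigcup_{i\in I}\Diamond V_i$, as required. The delicate step is the identification of $\Lambda Y$ — in particular noticing that preservation of infima forces $U$ to be the up-set of a single compact element in the lattice and domain cases, whereas for spectral spaces it forces mere quasi-compactness; the minor point in (iv) is that a union lying in $\Lambda X$ collapses to one of its members for $\ALat$ and $\ADom$, while $\SPEC$ genuinely requires passing to a finite subcover.
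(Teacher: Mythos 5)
Your proposal is correct and follows essentially the same route as the paper: identify $\Lambda X$ explicitly in each case (principal up-sets of compact elements for $\ALat$, the same plus $\varnothing$ for $\ADom$, compact opens for $\SPEC$), then deduce (iii) directly and (iv) by observing that a union in $\Lambda X$ collapses to a single member in the lattice/domain cases and to a finite subunion in the spectral case. Your treatment is if anything slightly more detailed than the paper's (e.g.\ the empty-infimum distinction between $\ALat$ and $\ADom$, and the explicit verification that spectrality of $\chi_U$ amounts to compactness of $U$).
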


\begin{proof}
  Condition (i) is well-known for the three categories.
	
  Condition (ii) is Proposition~\ref{prop4.1}.
	
  We show condition (iii) for $\ALat$. Given $X\in \ALat$ and $U\in \Omega X$,
  the characteristic function $\chi_U : X\to \s$ is a morphism of $\ALat$ iff it
  preserves arbitrary infima, and this is equivalent to $U$ being closed under
  arbitrary infima. We show that it forms a base of $\Omega X$.  If $U$ is
  closed under infima, it is of the form $U = \upc c$ where
  $c= \bigwedge U$. But then the open sets of $X$ closed under infima are
  precisely all of the form $\upc c$ with $c$ a compact element of $X$, and
  these sets are known to be a base for the topology of the algebraic lattice
  $X$. Moreover, they are closed under finite intersections.
	
  Condition (iii) for $\ADom$ is shown in an analogous way and we have, in this
  case,
  \[
    \Lambda X = \{U\in \Omega X\mid U \text{ is closed under non-empty
      infima}\}.
  \]

  Concerning (iii) for $\SPEC$, it is obvious that a continuous map $f: X\to \s$
  is spectral iff $f^{-1}(\{1\})$ is compact.  Thus
  \[
    \Lambda X = \{U\in \Omega X\mid U \text{ is compact}\}
  \]
  which is, by definition of spectral space, a base of $\Omega X$.
	
  Now we verify condition (iv) for the three categories.

  \vskip1.5mm
	
  $\catA = \ALat$. Let $H = \bigcup_{i\in I} V_i$ belong to $\Lambda X$ with all
  $V_i$ in $\Lambda X$. Then, $\bigcup_{i\in I} V_i = \upc~a$, with $a$ a
  compact element of $X$; hence, $a\in V_{i_0}$ for some $i_0 \in I$; but
  $V_{i_0} = \upc V_{i_0}$, thus we have $\bigcup_{i\in I} V_i =
  V_{i_0}$. Consequently,
  \[
    H=V_{i_0} = n^{-1}(\Diamond V_{i_0}) \text{ with } \Diamond V_{i_0}
    \subseteq \bigcup_{i\in I} \Diamond V_i.
  \]

  \vskip1.5mm

  $\catA=\ADom$.  The same proof as for $\ALat$, in case
  $\bigcup_{i\in I} V_i \neq \varnothing$. The case
  $\bigcup_{i\in I} V_i = \varnothing$ is trivial.

  \vskip2.5mm

  $\catA=\SPEC$. Consider $H=\bigcup_{i\in I} V_i$ in $\Lambda X$, with
  $V_i\in \Lambda X,\;\;i\in I$. Then, since $\bigcup_{i\in I} V_i$ is compact,
  it can be written as $\bigcup_{i\in I} V_i = \bigcup_{j\in J} V_j$, with
  $J\subseteq I$ finite.  Hence, we obtain
  \[
    H = \bigcup_{i\in I} V_i = \bigcup_{j\in J} V_j = n^{-1}(\bigcup_{j\in J}
    \Diamond V_j)
  \]
  wth $\bigcup_{j\in J} \Diamond V_j \subseteq \bigcup_{i\in I} \Diamond V_i$,
  and $\bigcup_{j\in J}\Diamond V_j \in \Lambda \hat{X}$, because it is a finite
  union of compact open sets of $\hat{X}$.
\end{proof}

\begin{theorem}\label{theo4.3}
  For a subcategory $\catA$ of $\TOP_0$ fulfilling conditions (i)-(iv) of Lemma
  \ref{lem4.2}, the diagram \eqref{eq4.2} is an equaliser in $\TOP_0$. As a
  consequence, the Sierpi\'nski space is a regular cogenerator in $\catA$, and,
  in particular, in each one of the categories $\ALat$, $\ADom$ and $\SPEC$.
\end{theorem}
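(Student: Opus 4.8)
The plan is to verify directly that the diagram \eqref{eq4.2} is an equaliser in $\TOP_0$, using the description of $n_X$ in terms of the base $\Lambda X$ and the key property (iv). Recall that $\hat X = \cote(\Hhom(X,\s),\s)$ consists of all monotone families $(z_U)_{U\in\Lambda X}\in\s^{\Lambda X}$, and similarly for $\cote(\Hhom(\hat X,\s),\s)$; the two parallel maps $\alpha,\beta:\hat X\rightrightarrows\cote(\Hhom(\hat X,\s),\s)$ are $\alpha=n_{\hat X}$ and the $\beta$ built via $\pi_{g\cdot n_X}$. A general element $g\in\Hhom(\hat X,\s)$ is the characteristic map of some $H'\in\Lambda\hat X$; then $(\alpha(z))_g = z\in H'$ reads off membership of $z$ in $H'$, while $(\beta(z))_g$ is determined by $g\cdot n_X = \chi_{n_X^{-1}(H')}$. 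So the equaliser of $\alpha,\beta$ consists of those $z\in\hat X$ for which, for every $H'\in\Lambda\hat X$, $z\in H' \iff$ (the coordinate of $z$ corresponding to $n_X^{-1}(H')\in\Lambda X$ equals $1$). I must show this equaliser is exactly the image $n_X[X]$, with $n_X$ a topological embedding onto it.

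\textbf{Step 1 (the embedding).} First I would check $n_X:X\to\hat X$ is an order-embedding and a topological embedding onto its image. Since condition (iii) says $\Lambda X$ is a base closed under finite intersections, the sets $\Diamond U$, $U\in\Lambda X$, form a subbase of $\hat X$ with $n_X^{-1}(\Diamond U)=U$; as the $U$ already form a base of $\Omega X$, $n_X$ is initial, hence a topological embedding (it is injective because $X$ is $T_0$ and $\Lambda X$ separates points, being a base). In particular $n_X[X]$ carries the subspace topology and $n_X$ is a section-candidate for the equaliser.

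\textbf{Step 2 ($n_X[X]$ equalises).} For $x\in X$ and any $H'\in\Lambda\hat X$: $n_X(x)\in H'$ iff $x\in n_X^{-1}(H')$ iff the $(n_X^{-1}(H'))$-coordinate of $n_X(x)=(\chi_U(x))_U$ equals $1$. This is precisely the equalising condition, so $n_X[X]$ is contained in the equaliser $E$.

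\textbf{Step 3 (conversely, $E\subseteq n_X[X]$) — this is the main obstacle.} Take $z=(z_U)_{U\in\Lambda X}\in E$. Let $\ff=\{U\in\Lambda X\mid z_U=1\}$; monotonicity of $z$ makes $\ff$ an up-set in the base $\Lambda X$, and I must show $\ff$ is (generated by) the neighbourhood filter of a point. The crucial input is condition (iv): if $H=\bigcup_{i\in I}V_i\in\Lambda X$ with all $V_i\in\Lambda X$, then $H=n_X^{-1}(H')$ for some $H'\in\Lambda\hat X$ with $H'\subseteq\bigcup_i\Diamond V_i$. Applying the equalising property of $z$ to this $H'$: $z_H = 1 \iff z\in H' \Rightarrow z\in\bigcup_i\Diamond V_i \iff z_{V_i}=1$ for some $i$. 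Thus $\ff$ has the ``$\Lambda X$-prime/Scott-open-like'' closure property: whenever a $\Lambda X$-union lies in $\ff$, some term does. Together with closure under finite intersections (condition (iii)), $\ff$ is a completely-$\Lambda X$-prime filter of opens. Now I invoke sobriety of $X$ (condition (i)): such a filter is the neighbourhood filter $\{U\in\Lambda X\mid x\in U\}$ of a unique point $x\in X$ — here I would spell out that $\{\,U\in\Omega X\mid U\notin\ff\,\}$ has a largest element whose complement's generic point, cut down to the base $\Lambda X$, reconstructs $x$; for $\SPEC$ and $\ADom$ one argues similarly using that sober + the base structure suffices. Then $z_U = 1 \iff U\in\ff \iff x\in U \iff \chi_U(x)=1$, i.e.\ $z=n_X(x)$, as desired.

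\textbf{Step 4 (conclusion).} Hence $E=n_X[X]$ as sets; by Step 1 the induced topology agrees, so \eqref{eq4.2} is an equaliser in $\TOP_0$. Since each of $\ALat,\ADom,\SPEC$ is closed under weighted limits in $\TOP_0$ (Proposition~\ref{prop4.1}), the equaliser is computed there as well, so \eqref{eq4.2} is an equaliser in $\catA$; by the definition of regular cogenerator, $\s$ is a regular cogenerator of $\catA$. I expect Step~3 — extracting the point $x$ from an abstract $\Lambda X$-completely-prime filter, uniformly across the three examples, using only sobriety plus the base properties in (iii)--(iv) — to be the technical heart of the argument.
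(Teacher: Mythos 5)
Your proposal takes essentially the same route as the paper's proof: both show that the family $\{U\in\Lambda X\mid z_U=1\}$ attached to an equalising element is a filter of $(\Lambda X,\subseteq)$ that is prime for $\Lambda X$-indexed unions (via condition (iv) together with the equalising identity), and then use sobriety to recognise it as the trace on $\Lambda X$ of the neighbourhood filter of a unique point, with continuity of the induced map coming for free because $n_X$ is a topological embedding. The one detail to repair is that closure of this family under finite intersections does not follow from condition (iii) alone, as you assert in Step~3 (monotonicity of $z$ gives the inequality in the wrong direction): you must also apply your equalising condition to $H'=\Diamond U\cap\Diamond V\in\Lambda\hat X$, noting $n_X^{-1}(\Diamond U\cap\Diamond V)=U\cap V$, to conclude $z_{U\cap V}=1$ --- which is exactly how the paper's step (A) argues.
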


\begin{proof}
  We prove that if $\catA$ is a subcategory of $\TOP_0$ fulfilling conditions
  (i)-(iv) of Lemma~\ref{lem4.2}, then \eqref{eq4.2} is an equaliser in
  $\TOP_0$. Since $\catA$ contains $\s$ and is closed under weighted limits in
  $\TOP_0$, it immediatly follows that \eqref{eq4.2} is also an equaliser in
  $\catA$.
	
  Put $n=n_X$. In order to conclude that $n$ is indeed the equaliser of $\alpha$
  and $\beta$, let
  \[
    Y \xrightarrow{\hspace{0.3cm}h\hspace{0.3cm}} \hat{X}=\cote
    (\Hhom(X,\s), \s)
  \]
  be a morphism in $\TOP_0$ such that
  \[
    \alpha h = \beta h.
  \]
  For $y\in Y$, put
  \[
    h(y)=(y_{U})_{U\in \Lambda X}.
  \]
  
  We show that:
	
  \begin{enumerate}
  \item[(A)] For every $y\in Y$, the set
    \[
      F_y = \{U\in \Lambda X\mid y_{_U} = 1\}
    \]
    is a filter of the poset $(\Lambda X, \subseteq)$, and has the following
    property:
    \begin{equation}
    \text{If $\bigcup_{i\in I} V_i\in F_y$ with all
      $V_i\in \Lambda X$, then $V_j\in F_y$ \text{ for some } $j\in I$.}
    \tag{$\Diamond$}
    \end{equation}

  \item[(B)] Every filter $F$ of the poset $(\Lambda X, \subseteq)$ satisfying
    property ($\Diamond$) is of the form
    \[
      F = B(x) = \{U\in \Lambda X\mid x\in U\}
    \]
    for a unique $x\in X$.

  \end{enumerate}
  \noindent
  After proving (A) and (B), it is then clear that we can define
  $\bar{h}: Y\to X$ by putting
  \[
    \bar{h}(y) = x\;\;\;\;\;\text{with}\;\; F_y = B(x),
  \]
  and this is the unique map making the triangle
  \[
    \xymatrix{
      X\ar[rr]^{n} & & \hat{X} \\
      & Y\ar@{-->}[lu]^{\bar{h}}\ar[ru]_{h} & }
  \]
  commutative. The fact that $\bar{h}$ is continuous follows, since $n$ is a
  topological embedding.
	
  \noindent \textit{Proof of (A)}. We observe that the equality
  $\alpha h(y) = \beta h(y)$ means that
  \[
    \chi_{H}({(y_{_U})}_{U\in \Lambda X}) = y_{n^{-1}(H)}\;\, ,\; \; H\in
    \Lambda \hat{X}.
  \]
  	
  Thus $F_y \neq \varnothing$, because
  $y_{_X} = y_{n^{-1}(\hat{X})} = \chi_{\hat{X}}({(y_{_U})}_{U\in \Lambda X}) =
  1$.
	
  It is also clear that if $U$ and $V$ are two open sets of $\Lambda X$ with
  $U\subseteq V$ and $U\in F_y$ then $V\in F_y$, by definition of
  $\hat{X}$. Moreover, $F_y$ is closed under binary intersections: $V$ and $W$
  laying in $F_y$ means that $y_{_V} = 1$ and $y_{_W} = 1$, that is,
  $(y_{_U})_{U\in \Lambda X}\in (\Diamond V) \bigcap (\Diamond W)$. But then
  $\chi_{(\Diamond V)\bigcap (\Diamond W)} ((y_{_U})_{U\in \Lambda X}) =
  1$. Now, $(\Diamond V) \bigcap (\Diamond W)\in \Lambda \hat{X}$, because
  $\hat{X}\in \catA$ (since $\s\in \catA$ and $\catA$ is closed under weighted
  limits), thus $\hat{X}$ satisfies (iii). Then, we have
  $y_{_{V\bigcap W}} = y_{n^{-1}(\Diamond V \bigcap \Diamond W)} = 1$, that is,
  $V\bigcap W\in F_y$.
	
  We show now that $F_y$ satisfies ($\Diamond$). Let $V_i,\; i\in I$, be a
  family of sets of $\Lambda X$ with $\bigcup_{i\in I} V_i\in F_y$, that is,
  $\bigcup_{i\in I} V_i\in \Lambda X$ and $y_{_{\bigcup_{i\in I} V_i}} = 1$.
  Then, by (iv), there is some $H'\in \Lambda \hat{X}$, with
  $n^{-1}(H')=\bigcup_{i\in I} V_i$ and
  $H'\subseteq \bigcup_{i\in I} \Diamond V_i$. Now, using the equality
  $\alpha h(y) = \beta h(y)$, we have:
  \[
    1 = y_{_{\bigcup_{i\in I} V_i}} = y_{n^{-1}(H')}= \chi_{H'}
    ({(y_{_U})}_{U\in \Lambda X}).
  \]
  Consequently,
  \[
    {(y_{_U})}_{U\in \Lambda X} \in H' \subseteq \bigcup_{i\in I} \Diamond V_i.
  \]
  Thus, for some $j\in I$, ${(y_{_U})}_{U\in \Lambda X}\in \Diamond V_j$ that
  is, $y_{_{V_j}} = 1$, hence $V_j\in F_y$.
	
  \vskip1.5mm

  \noindent \textit{Proof of (B)}. It is clear that $B(x)$ is a filter of
  $(\Lambda X, \subseteq)$ with property ($\Diamond$).  Conversely, let $F$ be a
  filter of $(\Lambda X, \subseteq)$ with property ($\Diamond$), and put
  \[
    A = \{z\in X\mid B(z)\subseteq F\}.
  \]
  We show that $A$ is a non-empty irreducible closed set.

  Indeed, given $t\in X\setminus A$, there is some $V\in \Lambda X$ with
  $t\in V$ and $V \notin F$. But then all elements of $V$ belong to
  $X\setminus A$, thus $t\in V\subseteq X\setminus A$; hence, $A$ is closed. $A$
  is also non-empty, because, if for every $x\in X$, we have some
  $U_x\in \Lambda X$ with $U_x\notin F$ then, by ($\Diamond$), we obtain that
  $\bigcup_{x\in X}U_x = X\notin F$, which contradicts the fact that $F$ is a
  filter.
	
  To show that $A$ is irreducible, let $A = F_1 \bigcup F_2$ with $F_1$ and
  $F_2$ closed. If $A\neq F_1$ and $A\neq F_2$ then there is
  $x\in X\setminus F_1$ and $y\in X\setminus F_2$ with $x,y\in A$. But then we
  can find $U,V\in \Lambda X$ with $x\in U\subseteq X\setminus F_1$ and
  $y\in V\subseteq X\setminus F_2$, and $U\bigcap V \in F$. Taking into account
  that
  $U\bigcap V\subseteq (X\setminus F_1)\bigcap (X\setminus F_2) = X\setminus A,$
  then, for every $z\in U\bigcap V$, there is some $V_z\in B(z)$ with
  $V_z\subseteq U\bigcap V$ and $V_z\notin F$. But then
  $U\bigcap V =\bigcup_{z\in U\bigcap V}V_z$ belongs to $F$ with all
  $V_z\notin F$, which contradicts ($\Diamond$).

  Since $X$ is sober and $A\subseteq X$ is a non-empty irreducible closed set,
  we know that $A = \overline{\{x\}}$ for a unique $x\in X$. We show that
  $F=B(x)$. Clearly $B(x)\subseteq F$. Concerning the converse inclusion,
  condition ($\Diamond$) ensures that, for every $U\in F$, there is some
  $z\in U\bigcap A = U\bigcap \overline{\{x\}}$ -- otherwise, we would find
  $V_z\in \Lambda X$, with $z\in V_z\not\in F$ and $U=\bigcup_{z\in U}V_z$, a
  contradiction to ($\Diamond$); but then $x\in U$, i.e., $U\in B(x)$.
\end{proof}

\begin{corollary}\label{cor4.4}
  For the filter, the proper filter and the prime filter monads, the category
  $\catX_{\alg}$ is, respectively, $\ALat$, $\ADom$ and $\SPEC$. Moreover,
  $\catX_{\alg}$ is the closure under weighted limits of $\xdot$ in $\xupt$,
  thus, also in $\TOP_0$.
\end{corollary}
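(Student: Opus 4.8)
The plan is to prove the two assertions in turn. For the identification of $\catX_{\alg}$, I would first note that, by Lemma~\ref{lem4.2}, each of $\ALat$, $\ADom$, $\SPEC$ --- regarded as a full subcategory of the corresponding $\xupt$ (of continuous lattices, of bounded complete domains, of stably compact spaces) --- satisfies conditions (i)--(iv); hence Theorem~\ref{theo4.3} tells us that \eqref{eq4.2} is an equaliser for every one of their objects, so that, for the filter monad, $\ALat\subseteq\catX_{\alg}$, and analogously $\ADom\subseteq\catX_{\alg}$ for the proper filter monad and $\SPEC\subseteq\catX_{\alg}$ for the prime filter monad. For the reverse inclusion the crucial point is that the objects $\hat X=\cote(\Hhom(X,\s),\s)$ and $\cote(\Hhom(\hat X,\s),\s)$ occurring in \eqref{eq4.2} are cotensor products, hence weighted limits, of copies of the Sierpi\'nski space $\s$. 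As $\s$ is itself an algebraic lattice (as well as a bounded complete algebraic domain and a spectral space), and as, by Proposition~\ref{prop4.1}, $\ALat$ (resp.\ $\ADom$, $\SPEC$) is closed under weighted limits in $\TOP_0$, both of these objects lie in $\ALat$ (resp.\ $\ADom$, $\SPEC$). Therefore, if $X\in\catX_{\alg}$, the equaliser \eqref{eq4.2} displays $X$ as a weighted limit of objects of $\ALat$, and Proposition~\ref{prop4.1} again forces $X\in\ALat$. This establishes $\catX_{\alg}=\ALat$, and the analogous arguments give $\catX_{\alg}=\ADom$ and $\catX_{\alg}=\SPEC$.

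For the ``moreover'', write $\mathcal{W}$ for the closure of $\xdot$ under weighted limits in $\xupt$ (so $\mathcal{W}$ is closed under weighted limits by construction). Since $\xupt$ is itself closed under weighted limits in $\TOP_0$, these are computed the same way in $\xupt$ and in $\TOP_0$, and, by the previous paragraph, $\catX_{\alg}$ equals $\ALat$/$\ADom$/$\SPEC$, which is likewise closed under weighted limits (Proposition~\ref{prop4.1}); so to obtain $\mathcal{W}\subseteq\catX_{\alg}$ it suffices to check $\xdot\subseteq\catX_{\alg}$, i.e.\ that every free algebra is an algebraic lattice (resp.\ a bounded complete algebraic domain, a spectral space). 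For the filter monad, $FY$ is the lattice of filters on $\Omega Y$ ordered by inclusion; every filter is the directed union of the principal filters $\langle A\rangle$ it contains, these are precisely the compact elements, and $A^\#=\upc\langle A\rangle$, so $FY$ is an algebraic lattice carrying its Scott topology; restricting to proper filters gives $F_1Y\in\ADom$ by the same reasoning. For the prime filter monad, $F_2Y$ is sober (being a stably compact space), and the sets $A^\#$ form a base of its topology closed under finite intersections; each $A^\#$ is moreover compact --- if $A^\#\subseteq\bigcup_i B_i^\#$ then no prime filter containing $A$ avoids all the $B_i$, which by the prime ideal theorem for distributive lattices forces $A\le B_{i_1}\vee\dots\vee B_{i_n}$ for a finite subfamily, and then $B_{i_1}^\#,\dots,B_{i_n}^\#$ already cover $A^\#$ --- so $F_2Y$ is a spectral space.

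For the opposite inclusion $\catX_{\alg}\subseteq\mathcal{W}$, I would use that $\s$ is itself a free algebra for each of the three monads (checked directly; e.g.\ $\s\cong F\mathbf{1}$ for the filter monad, $\mathbf{1}$ the one-point space, and $\s\cong F_1\s$, $\s\cong F_2\s$ in the other two cases), so that $\s\in\xdot\subseteq\mathcal{W}$. Consequently the cotensor products $\hat X$ and $\cote(\Hhom(\hat X,\s),\s)$ are weighted limits of an object of $\mathcal{W}$, hence lie in $\mathcal{W}$, and as \eqref{eq4.2} exhibits every $X\in\catX_{\alg}$ as an equaliser of these two, $X\in\mathcal{W}$. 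Together with the previous inclusion this gives $\catX_{\alg}=\mathcal{W}$, and, as already noted, weighted limits in $\xupt$ are computed as in $\TOP_0$, so $\catX_{\alg}$ is equally the closure of $\xdot$ under weighted limits in $\TOP_0$. The only step that goes beyond routine bookkeeping with closure under weighted limits and direct appeals to the results already established is the concrete description of the free algebras --- most delicately, that $F_2Y$ is a spectral space --- together with the verification that the Sierpi\'nski space is a free algebra.
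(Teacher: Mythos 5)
Your proposal is correct and follows essentially the same route as the paper: both directions of $\catA=\catX_{\alg}$ come from Theorem~\ref{theo4.3} together with closedness of $\catA$ under weighted limits and $\s\in\catA$, and the ``moreover'' part comes from $\s$ being a free algebra so that the equaliser \eqref{eq4.2} places $\catX_{\alg}$ inside the weighted-limit closure of $\xdot$. The only difference is that you spell out the verification that the free algebras $FY$, $F_1Y$, $F_2Y$ lie in $\ALat$, $\ADom$, $\SPEC$ respectively (needed for the reverse inclusion of the ``moreover''), which the paper treats as known and cites rather than proves.
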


\begin{proof}
  The above theorem shows that, in all the three cases
  $\catA=\ALat, \, \ADom,\, \SPEC$, $\catA$ is indeed contained in
  $\catX_{\alg}$. On the other hand, since $\s\in\catA$, and $\catA$ is closed
  under weighted limits in $\xupt$, the diagram \eqref{eq4.2} is contained in
  $\catA$ whenever it is an equaliser diagram. Hence $\catA$ coincides with
  $\catX_{\alg}$. Moreover, every $X$ of $\xupt$ making diagram \eqref{eq4.2} an
  equaliser belongs to the closure under weighted limits of $\xdot$ in $\xupt$,
  because $\s$ belongs to $\xdot$. Indeed, for the open filter monad $\mT$, $\s$
  is homeomorphic to $TX$ with $X$ a singleton space, and, for the proper and
  the prime filter monad, $\s$ is homeomorphic to $T\s$. Therefore, in the three
  cases, $\catX_{\alg}$ is precisely the closure under weighted limits in
  $\xupt$ of $\xdot$; and also in $\TOP_0$, since $\xupt$ is closed under
  weighted limits in $\TOP_0$.
\end{proof}

\begin{corollary}\label{cor4.6}
  The categories $\ALat$, $\ADom$ and $\SPEC$ have weighted colimits.
\end{corollary}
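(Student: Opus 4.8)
The plan is to deduce this directly from Theorem~\ref{com->cocom}, which asserts that any order-enriched category with weighted limits and a regular cogenerator has weighted colimits. So for each of $\ALat$, $\ADom$ and $\SPEC$ it will suffice to verify two things: that the category has weighted limits, and that it admits a regular cogenerator. Both have essentially been prepared in the preceding results, so the proof will be short.

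First I would record that each of the three categories has weighted limits. The category $\TOP_0$ itself has weighted limits — its ordinary limits are conical and inserters exist as the evident subspaces — and by Proposition~\ref{prop4.1} each of $\ALat$, $\ADom$ and $\SPEC$ is closed under weighted limits in $\TOP_0$. Hence each of the three categories has all weighted limits, computed as in $\TOP_0$.

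Next I would exhibit the regular cogenerator. By Lemma~\ref{lem4.2}, each of $\ALat$, $\ADom$ and $\SPEC$ satisfies conditions (i)--(iv) there, so Theorem~\ref{theo4.3} applies and tells us that the diagram~\eqref{eq4.2} is an equaliser in the category at hand. By the definition of regular cogenerator, this says exactly that the Sierpi\'nski space $\s$ is a regular cogenerator of each of the three categories.

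Putting the two steps together, Theorem~\ref{com->cocom} immediately gives that $\ALat$, $\ADom$ and $\SPEC$ all have weighted colimits. I do not expect any genuine obstacle here, since the substantive content lives in Proposition~\ref{prop4.1}, Lemma~\ref{lem4.2}, Theorem~\ref{theo4.3} and Theorem~\ref{com->cocom}; the only point worth a moment's care is that the very notion of regular cogenerator presupposes the ambient category to have weighted limits, which is precisely what the first step supplies, so the invocation of Theorem~\ref{com->cocom} is legitimate.
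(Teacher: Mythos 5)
Your proposal is correct and follows exactly the paper's own route: the paper's proof is a one-line citation of Theorem~\ref{com->cocom}, Proposition~\ref{prop4.1} and Theorem~\ref{theo4.3}, and you have simply spelled out how these combine. The one point you flag --- that the notion of regular cogenerator presupposes weighted limits, supplied by Proposition~\ref{prop4.1} --- is a worthwhile observation but does not change the argument.
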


\begin{proof}
  It is a consequence of Theorem~\ref{com->cocom}, Proposition~\ref{prop4.1} and
  Theorem~\ref{theo4.3}.
\end{proof}


\section{The idempotent split completion for the filter monad}
\label{sec:idemp-split-compl}

Let $\mF=\fmonad$ be the open filter monad on $\catX=\TOP_0$. As in the previous
section, we use $\leq$ to refer to the order induced in the hom-sets of $\TOP_0$
by the \emph{specialisation order}, thus the open filter monad is of {\KZ} type
with respect to $\geq$. Accordingly, in all notions and results of
Sections~\ref{sec:backgr-mater-kock} and \ref{sec:abstr-algebr-objects} on {\KZ}
monads, regarding adjunctions between morphisms, ``left adjoint'' interchanges
with ``right adjoint''.

As seen in Section~\ref{sec:abstr-algebr-objects}, the idempotent split
completion of $\catX_{\mF}$, denoted by $\kar(\catX_{\mF})$, is equivalent to
the full subcategory $\Spl(\catX^{\mF})$ of $\catX^{\mF}$. And
$\Spl(\catX^{\mF})$ consists of all $\mF$-algebras $(X,\alpha)$ for which there
is a morphism $t:X\to FX$ (in $\TOP_0$) such that $\ga \dashv t$. Moreover, it
is known that the subcategory $\catX_{\mF}$ is contained in $\ALat$
\cite{Esc98}, and the latter is closed under weighted limits in
$\CONTLAT$. Thus, we have the following full embeddings:
\[
  \catX_{\mF}\hookrightarrow \kar(\catX_{\mF})\hookrightarrow \ALat
  \hookrightarrow\catX^{\mF}=\CONTLAT.
\]

In this section we show that the idempotent split completion of $\catX_{\mF}$
consists precisely of all algebraic lattices whose set of compact elements forms
the dual of a frame.

\begin{notation}
  Along this section we use the symbol $K(X)$ to denote the set of compact
  elements of a directed complete poset (see Definition~\ref{def:domains}).
\end{notation}

\begin{remark}\label{rem5.1}
  Let $X$ and $Y$ be continuous lattices and let
  $\smash{\xymatrix{Y\ar@/^1ex/[r]^{\ga}&X\ar@/^1ex/[l]^{e}}}$ be in $\TOP_0$ with
  $\ga e=\id_X$ and $e\ga \leq \id_Y$. Then $\ga$ is defined by
  \[
    \ga(y)=\bigvee \{z\in X\mid e(z) \leq y\}.
  \]
  This follows from Freyd Adjoint Theorem.
\end{remark}

\vspace{-1.4em}
\begin{lemma}\label{lem5.2}
  Let $X,Y$ be directed complete posets with $Y$ continuous, and let
  $Y\adjunctop{t}{\ga}X$
  be in $\POSS$ with $\alpha$ a surjective map and $t$ preserving directed
  suprema. Then $\ga$ preserves the way-below relation $\ll$, and, as a
  consequence, $X$ is also continuous and the set of compact elements of $X$ is
  given by
  \[
    K(X)=\{\ga(y)\mid y\in K(Y)\}.
  \]
\end{lemma}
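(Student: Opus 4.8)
The plan is to first record the elementary consequences of the adjunction $\ga\dashv t$ and the surjectivity of $\ga$, and then to transport the approximation structure of $Y$ along $\ga$. Writing the adjunction as $\ga\dashv t$, so that $\id_Y\le t\ga$ and $\ga t\le\id_X$, one gets $\ga t\ga=\ga$ (post-compose the first inequality with $\ga$, pre-compose the second with $\ga$), and hence $\ga t=\id_X$ since $\ga$ is surjective: indeed $\ga t(x)=\ga t\ga(y)=\ga(y)=x$ whenever $x=\ga(y)$. Thus $t$ is a section of $\ga$; moreover $\ga$, being a left adjoint, preserves all existing suprema, in particular directed ones, while $t$ preserves directed suprema by hypothesis. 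These are the only facts I will use.

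To see that $\ga$ preserves $\ll$, I would take $a\ll b$ in $Y$ and a directed $D\subseteq X$ with $\ga(b)\le\bigvee^{\upc}D$; applying the monotone, directed-sup-preserving map $t$ gives $b\le t\ga(b)\le t\bigl(\bigvee^{\upc}D\bigr)=\bigvee^{\upc}_{d\in D}t(d)$ with $\{t(d)\mid d\in D\}$ directed, so $a\le t(d)$ for some $d\in D$, and therefore $\ga(a)\le\ga t(d)=d\in D$; hence $\ga(a)\ll\ga(b)$. For the continuity of $X$ I would fix $x\in X$, put $y=t(x)$ (so $\ga(y)=x$), use continuity of $Y$ to write $y=\bigvee^{\upc}\{a\in Y\mid a\ll y\}$, and apply $\ga$ (which preserves this directed supremum): this yields $x=\bigvee^{\upc}\{\ga(a)\mid a\ll y\}$, a directed family of elements that are $\le x$ and, by the previous point, way below $x$. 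The standard criterion — a directed complete poset in which every element is a directed supremum of a directed family of elements way below it is continuous (see \cite{GHK+03}) — then shows both that $X$ is continuous and that $x=\bigvee^{\upc}\{c\in X\mid c\ll x\}$ with $\{c\in X\mid c\ll x\}$ directed.

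For the description of the compact elements, the inclusion $\{\ga(y)\mid y\in K(Y)\}\subseteq K(X)$ is immediate from the first point, since $y\ll y$ implies $\ga(y)\ll\ga(y)$. For the reverse inclusion I would take $k\in K(X)$, set $y=t(k)$ so that $\ga(y)=k$, write $y$ as the directed supremum of the compact elements of $Y$ below it, and apply $\ga$: this exhibits $k$ as a directed supremum $\bigvee^{\upc}$ of elements $\ga(j)$, with $j\in K(Y)$ and $j\le y$, each of which is $\le\ga(y)=k$; since $k\ll k$, one such $\ga(j)$ satisfies $k\le\ga(j)$, so $k=\ga(j)\in\{\ga(j')\mid j'\in K(Y)\}$.

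I expect this last step to be the main obstacle. A section of $\ga$ need not carry a compact element of $X$ to a compact element of $Y$, so one cannot simply take $j=t(k)$; the argument has to approximate $t(k)$ from below by compact elements of $Y$ and use the compactness of $k$ to isolate a single one, and this is exactly the point at which the approximation structure of $Y$ is used — in the applications of the lemma $Y$ is a free algebra, hence an algebraic lattice, so every element (in particular $t(k)$) is the directed supremum of the compact elements below it.
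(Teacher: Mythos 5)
Your proof follows essentially the same route as the paper's: the adjunction $\ga\dashv t$ together with preservation of directed suprema by $t$ yields preservation of $\ll$ (the paper uses the counit inequality $\ga t(z_i)\le z_i$ where you use $\ga t=\id_X$ from surjectivity, an immaterial difference), and the description of $K(X)$ is obtained exactly as you propose, by writing $t(k)$ as a directed supremum of compact elements of $Y$, applying $\ga$, and invoking compactness of $k$ to isolate a single $\ga(j)$. Your closing caveat is well taken: that last step uses that $Y$ is \emph{algebraic} rather than merely continuous, and the paper's own proof relies on the same decomposition $t(x)=\bigvee\{y\in K(Y)\mid y\le t(x)\}$ while citing only continuity of $Y$ — harmless in the intended application, where $Y=FA$ is an algebraic lattice, but a hypothesis worth making explicit.
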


\begin{proof}
  Let $y_0, y_1 \in Y$ with $y_0\ll y_1$. Assume that
  $\ga(y_1)\leq \vdir_{i\in I} z_i$. Then, since $\ga \dashv t$,
  $y_1\leq t(\vdir_{i\in I} z_i)=\vdir_{i\in I} t(z_i)$. By hypothesis, there is
  some $i\in I$ with $y_0\leq t(z_i)$. Hence $\ga(y_0)\leq \ga t(z_i)\leq
  z_i$. Consequently, $\ga(y_0)\ll \ga(y_1)$. Thus $\ga$ prserves the relation
  $\ll$, in particular it preserves compact elements.

  Let now $x\in K(X)$. First we show that
  $x=\bigvee\{\ga(y)\mid y\in K(Y), \ga(y)\leq x\}$. Indeed, for every $y\in Y$,
  we have that the inequalities $\ga(y)\leq x$ and $y\leq t(x)$ are equivalent,
  because $\ga\dashv t$. Now, using also the fact that $\ga$ is surjective and
  $Y$ is continuous, we have that
  \begin{align*}
    x=\ga t(x)=\ga\left(\bigvee \{y\in K(Y)\mid y\leq t(x)\}\right)
    &=\bigvee \{\ga(y)\mid y\in K(Y),\, y\leq t(x)\}
    \\ &
    = \bigvee \{\ga(y)\mid y\in K(Y),\, \ga(y)\leq x\}.
  \end{align*}
  Let now $x\in K(X)$. The set $\{\ga(y)\mid y\in K(Y),\, \ga(y)\leq x\}$ is
  directed in $X$ (because it is the image under $\ga$ of a directed set). Then,
  as $x$ is compact, it must be of the form $\ga(y)$ for some $y\in K(Y)$.
\end{proof}

\begin{lemma}\label{lem5.3}
  Let $A$ be an algebraic lattice such that there is $t:A\to F A$ in $\TOP_0$
  which is right adjoint to the $\mF$-structure $\alpha: FA\to A$ (thus,
  $\alpha t=\id_A$ and $\id_{F A}\le t\alpha$). Then the set $K(A)$ of compact
  elements of $A$ is closed under arbitrary infima, and, in $K(A)$, finite
  suprema distribute over arbitrary infima.
\end{lemma}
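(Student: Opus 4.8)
The plan is to deduce the statement from Lemma~\ref{lem5.2}, applied to the adjunction $\ga\dashv t$ between $FA$ and $A$, together with the concrete description of the compact elements of the free algebra. Recall first that, $(FA,m_A)$ being a free $\mF$-algebra, $FA$ is a continuous lattice and its topology is the Scott topology of its specialisation order, which on $FA$ is just filter inclusion (by the description of the generating opens $U^\#$); in particular $FA$ is a continuous directed complete poset and its directed suprema are unions of filters. Moreover $\ga\colon FA\to A$ is surjective (since $\ga t=\id_A$), and $t\colon A\to FA$, being a morphism of $\TOP_0$ between spaces carrying the Scott topology, is Scott-continuous and hence preserves directed suprema. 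Therefore Lemma~\ref{lem5.2} applies with $(Y,X)=(FA,A)$ and yields $K(A)=\{\ga(\ff)\mid\ff\in K(FA)\}$.

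Next I would make this explicit. For an open $P\subseteq A$ the principal filter $\langle P\rangle=\{W\in\Omega A\mid P\subseteq W\}$ is compact in $FA$ (it is way below itself), and $\ga(\langle P\rangle)=\bigwedge P$: indeed, since $e_A\dashv\ga$ (Theorem~\ref{d:thm:1}) and $e_A(b)=\Omega(b)$, $\ga(\langle P\rangle)$ is the largest $b\in A$ with $\Omega(b)\subseteq\langle P\rangle$, equivalently with $P\subseteq\bigcap\{W\in\Omega A\mid b\in W\}=\upc b$ (the last equality holding because $A$ is algebraic, hence a continuous domain), equivalently with $b\le\bigwedge P$. Conversely every $c\in K(A)$ equals $\bigwedge(\upc c)$ with $\upc c$ Scott-open. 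Combining this with the previous paragraph,
\[
  K(A)=\{\,\bigwedge P\mid P\in\Omega A\,\};
\]
in particular $\bigwedge P\in K(A)$ for every open $P\subseteq A$.

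The heart of the proof is the identity $\bigwedge(P\cap Q)=\bigwedge P\vee\bigwedge Q$ for all $P,Q\in\Omega A$. The inequality ``$\ge$'' is immediate, as every point of the open set $P\cap Q$ lies above both $\bigwedge P$ and $\bigwedge Q$. For ``$\le$'' one uses the \emph{existence of the splitting $t$}: from $\ga\dashv t$ and $\ga(\langle P\rangle)=\bigwedge P$ one gets $P\in t(a)\iff\langle P\rangle\subseteq t(a)\iff\ga(\langle P\rangle)\le a\iff\bigwedge P\le a$ for every $a\in A$; taking $a=\bigwedge P\vee\bigwedge Q$ gives $P,Q\in t(a)$, and since $t(a)$ is a \emph{filter} of the lattice $\Omega A$, hence closed under binary intersection, $P\cap Q\in t(a)$, i.e.\ $\bigwedge(P\cap Q)\le a$. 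I expect this step — extracting a lattice-theoretic identity from the mere fact that $t(a)$ is a genuine filter — to be the main obstacle; the remaining deductions are formal.

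Granting the displayed description of $K(A)$ and this identity, I would finish as follows. If $(x_j)_{j\in J}$ is a family in $K(A)$, write $x_j=\bigwedge P_j$ with $P_j\in\Omega A$; then $\bigwedge_j x_j=\bigwedge\bigl(\bigcup_j P_j\bigr)$ with $\bigcup_j P_j\in\Omega A$, so $\bigwedge_j x_j\in K(A)$, which proves closure under arbitrary infima. For the distributive law, let $c=\bigwedge P$ and $d_j=\bigwedge Q_j$ lie in $K(A)$ ($P,Q_j\in\Omega A$) and set $Q=\bigcup_j Q_j\in\Omega A$, so $\bigwedge_j d_j=\bigwedge Q$; applying the identity twice and the set equality $\bigcup_j(P\cap Q_j)=P\cap Q$,
\[
  \bigwedge_j(c\vee d_j)=\bigwedge_j\bigwedge(P\cap Q_j)=\bigwedge\Bigl(\bigcup_j(P\cap Q_j)\Bigr)=\bigwedge(P\cap Q)=\bigwedge P\vee\bigwedge Q=c\vee\bigwedge_j d_j,
\]
and every element occurring is compact by the description of $K(A)$ (each is the infimum of an open set). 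This yields the asserted distributivity of (non-empty) finite suprema over arbitrary infima in $K(A)$.
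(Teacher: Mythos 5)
Your proof is correct and follows essentially the same route as the paper: both reduce to the free algebra via Lemma~\ref{lem5.2}, identify $K(FA)$ with the principal filters $\upc U$ (so that $K(A)=\{\bigwedge P\mid P\in\Omega A\}$), and exploit the adjunction $\alpha\dashv t$ together with the fact that filters are closed under finite intersections. The only (cosmetic) difference is that the paper verifies the infimum-closure and distributivity identities upstairs in $K(FA)$ and transports them along $\alpha$, which preserves both infima and suprema, whereas you derive the equivalent identity $\bigwedge(P\cap Q)=\bigwedge P\vee\bigwedge Q$ directly in $A$ from the adjunction.
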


\begin{proof}
  It is easy to see that in $F A$, the compact elements are closed under
  arbitrary infima. Indeed $K(F A)=\{\upc U\mid U\in \Omega A\}$, and we
  have that
  $\bigcap_{i\in I} \upc U_i = \upc \left(\bigcup_{i\in I} U_i\right)$.

  Moreover, in $K(F A)$ finite suprema are distributive with respect to
  arbitrary infima. Indeed, it is easy to see that, for $V_i$, $U$ and $V$ in
  $\goo A$, we have in $F A$:
  \begin{tfae}
  \item
    $\bigwedge_{i\in I}\upc {V}_i = \bigcap_{i\in I}\upc {V}_i =
    \upc \left(\bigcup_{i\in I}{V}_i\right)$; and
  \item
    $\left(\upc{U}\right)\bigvee \left(\upc{V}\right) = \upc
    \left({U}\bigcap {V}\right) $.
  \end{tfae}
  \noindent
  Hence,
  \begin{align*}
    \left(\upc {U}\right) \medvee \left(\bigwedge_{i\in I}\upc {V}_i\right)
       &= \upc \left({U}\medcap \left(\bigcup_{i\in I} {V}_i\right)\right) \\
       &= \upc \left(\bigcup_{i\in I} \left({U}\medcap{V}_i\right)\right)\\
       & = \bigcap_{i\in I} \upc \left({U} \medcap {V}_i\right)\\
       & = \bigwedge_{i\in I} \left(\left(\upc {U}\right)\medvee \left(\upc {V}_i \right)\right).
  \end{align*}
  Now, being simultaneously a right and a left adjoint, $\alpha$ preserves
  infima and suprema. Consequently, by Lemma \ref{lem5.2}, as in $FA$, compacts
  in $A$ are closed under infima. Moreover, $A$ also inherits the distribuivity
  of finite suprema over arbitrary infima for compact elements: putting
  $c=\ga(d)$ and $c_i=\ga(d_i)$ with $d$ and all $d_i$ compacts of $A$, we have:
  \begin{align*}
    c\vee \left(\bigwedge_{i\in I} c_i\right) & = \alpha \left(d\right) \vee\left(\bigwedge_{i\in I} \alpha \left(d_i\right)\right)= \alpha \left(d\vee \left(\bigwedge_{i\in I} d_i\right)\right) = \alpha \left(\bigwedge_{i\in I} \left(d\vee d_i\right)\right) 
                                                = \bigwedge_{i\in I} \left(c\vee c_i\right)
  \end{align*}
  \\[-1.2em]
\end{proof}

\begin{theorem} The idempotent split completion of the category $\catX_{\mF}$ of
  algebraic algebras is precisely the full subcategory of $\CONTLAT$ of all
  algebraic lattices whose subposet of compacts is the dual of a frame.
\end{theorem}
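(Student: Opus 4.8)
The plan is to reduce everything, via the equivalence $\kar(\catX_\mF)\simeq\Spl(\catX^\mF)$ and the full embeddings $\catX_\mF\hookrightarrow\kar(\catX_\mF)\hookrightarrow\ALat\hookrightarrow\CONTLAT$ recalled at the start of this section, to an intrinsic description of the split $\mF$-algebras: I will show that a continuous lattice $A$ is a split $\mF$-algebra if and only if $A$ is an algebraic lattice and $K(A)^{\op}$ is a frame; fullness of the subcategory is then automatic since all the displayed embeddings are full. Throughout I use that $A$ is a split $\mF$-algebra exactly when the structure map $\alpha\colon FA\to A$ — which sends a filter to its largest convergence point, so that $\alpha(\ff)=\max\{b\mid\goo(b)\subseteq\ff\}$ — has an $\mF$-homomorphic section $t\colon A\to FA$ with $\alpha\cdot t=\id_A$; equivalently (under $\catX^{\mF}\simeq\CONTLAT$), a morphism preserving directed suprema and all infima with $\alpha\cdot t=\id_A$.

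The forward direction is essentially a repackaging of Lemma~\ref{lem5.3}: if $A$ is a split $\mF$-algebra then $A\in\kar(\catX_\mF)\subseteq\ALat$ is an algebraic lattice, and Lemma~\ref{lem5.3} gives that $K(A)$ is closed under arbitrary infima of $A$ and that finite suprema distribute over arbitrary infima inside $K(A)$. Being closed under all infima (including $\bigwedge\varnothing=1_A$), $K(A)$ is a complete lattice, and the distributivity law of Lemma~\ref{lem5.3} is precisely the frame law for $K(A)^{\op}$.

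For the converse, let $A$ be an algebraic lattice with $K(A)^{\op}$ a frame. I would first note that the hypothesis forces $K(A)$ to be closed under arbitrary infima formed in $A$: completeness of $K(A)^{\op}$ supplies all infima in $K(A)$, and by algebraicity any compact below $\bigwedge^{A}S$ is below $\bigwedge^{K(A)}S$, so the two infima agree. Hence $1_A$ is compact and, for every Scott-open $U$, the element $\bigwedge U=\bigwedge(U\cap K(A))$ is compact. Then I would define the candidate splitting by
\[
  t(a)=\{U\in\goo A\mid\textstyle\bigwedge U\le a\}
\]
for $a\in A$, and check in turn: $(1)$ $t(a)$ is a filter on $\goo A$ — upward closure and $A\in t(a)$ are immediate, while closure under binary intersection amounts to the identity $\bigwedge(U\cap V)=\bigwedge U\vee\bigwedge V$, which is the distributivity of finite meets over joins in $K(A)^{\op}$ transported through the bijection between Scott-opens of $A$ and down-sets of $K(A)^{\op}$; $(2)$ $t$ is continuous, since $t^{-1}(U^{\#})=\upc(\bigwedge U)$ is Scott-open by compactness of $\bigwedge U$; $(3)$ $t$ preserves all infima ($t(\bigwedge_i a_i)=\bigcap_i t(a_i)$, the meet in $FA$) and directed suprema ($t(\vdir_i a_i)=\bigcup_i t(a_i)$, the directed join in $FA$, using that the compact element $\bigwedge U$ lies below a directed supremum only if it lies below a member), so $t$ is an $\mF$-homomorphism; $(4)$ $\alpha\cdot t=\id_A$, because $\goo(b)\subseteq t(a)$ unravels to ``every compact $\le b$ is $\le a$'', i.e.\ $b\le a$ by algebraicity, so $\alpha(t(a))=\max\{b\mid\goo(b)\subseteq t(a)\}=a$. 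By the characterisation in the first paragraph, $A$ is then a split $\mF$-algebra.

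The main obstacle is step $(1)$ of the converse: guessing \emph{which} formula for $t$ to take and recognising that the frame condition on $K(A)^{\op}$ is exactly what makes $t(a)$ a filter (so that $t$ lands in $FA$ at all), with completeness of $K(A)^{\op}$ then accounting for continuity. Everything else — the other verifications and the whole forward direction — is routine bookkeeping with the Scott topology on an algebraic lattice together with Lemma~\ref{lem5.3}.
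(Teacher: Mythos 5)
Your proposal is correct and follows essentially the same route as the paper: the forward direction is Lemma~\ref{lem5.3}, and the converse constructs the identical splitting $t(a)=\{U\in\goo A\mid \bigwedge k(U)\le a\}$, using the frame law on $K(A)^{\op}$ to get closure under binary intersections and compactness of $\bigwedge k(U)$ for continuity. The only (harmless) difference is at the final verification: the paper establishes the adjunction directly by showing $t(a)$ is the largest filter sent below $a$ by $\alpha$, whereas you invoke the equivalent homomorphic-splitting criterion (recalled in Section~\ref{sec:abstr-algebr-objects}) by checking that $t$ preserves directed suprema and all infima and that $\alpha\cdot t=\id_A$.
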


\begin{proof}
  We know that $\kar\left(\catX_{\mF}\right)$ consists of all algebraic lattices
  $A$ such that the $\mF$-structure of $A$, $\alpha:F A\to A$, has a right
  adjoint $t:A\to F A$.  In particular, since $\alpha$ is a retraction, also
  $\alpha t = \id_A$.  Consequently, by Lemma \ref{lem5.3}, for every
  $A\in \text{kar}\left(\catX_{\mF}\right)$, the poset dual to $K(A)$ is a
  frame.
	
  Conversely, let $A$ be an algebraic lattice such that in its subposet $K(A)$
  there are all infima and finite suprema are distributive with respect to
  arbitrary infima.
	
  By Remark \ref{rem5.1}, the $\mF$-structure map of $A$ is given by
  \[
    \alpha\left(\phi\right) = \bigvee \{x\in A\mid e_A(x) \subseteq
    \phi\},\;\;\; \phi\in FA.
  \]
  We show that $\ga$ has a right adjoint $t:A\ra FA$.

  For every $G\in \goo A$, let $k(G)$ denote the compact elements of A which
  belong to $G$. Given $a\in A$, consider the subset of $FA$
  \begin{equation}\label{sa}
    S_a=\{\phi \in FA\mid \ga(\phi)\leq a\}
  \end{equation}
  and the subset of $\goo A$
  \begin{equation}\label{psia}
    \psi_a=\{G\in \goo A\mid \bigwedge k(G)\leq a\}.
  \end{equation}
  We show that $\psi_a$ is a filter and $\psi_a=\bigvee S_a$.

  First, we show that the union of all filters of $S_a$ is precisely $\psi_a$.
  Let then $\phi\in FA$ with $\ga(\phi)\leq a$, and let $G\in \phi$. Put
  $c=\bigwedge k(G)$. By hypothesis, $c\in K(A)$, then $\upc c$ is an open
  set containing $G$, hence belongs to $\phi$. Consequently,
  $e_A(c)\subseteq \phi$, thus, $c\leq \ga(\phi)$. Since, $\ga(\phi)\leq a$, it
  follows that $c\leq a$, as desired.  Conversely, let $G$ be an open set of $A$
  with $ \bigwedge k(G)\leq a$. Put $\phi=\upc G$. Then, for every $x\in A$,
  $e_A(x)\subseteq \phi$ means that every open set of which $x$ is an element
  contains $G$, and, in particular, contains $k(G)$. But this implies that
  $x\leq c$ for all $c\in k(G)$, that is, $x\leq \bigwedge k(G)$, and, thus,
  $x\leq a$. Since this happens to all $x$ with $e_A(x)\subseteq \phi$, we have
  $\ga(\phi)\leq a$. Hence, $G\in \phi$ with $\phi$ a filter of $S_a$.

  Now, we show that $\psi_a$ is indeed a filter, then $\psi_a=\bigvee
  S_a$. First, observe that, every open $G$ is the union of all sets
  $\upc c$ with $c\in k(G)$, and, moreover, if
  $\{c_i,\, i\in I\}\subseteq K(A)$ with $G=\bigcup_{i\in I}\upc c_i$, then
  $\bigwedge k(G)=\bigwedge_{i\in I}c_i$. Now, let $G$ and $H$ belong to
  $\psi_a$ with $k(G)=\{c_i,\, i\in I\}$ and $k(H)=\{d_j,\, j\in J\}$. Then
  \[
    G\medcap H= \left(\bigcup_{i\in I}\upc c_i\right) \bigcap \left(
      \bigcup_{j\in J}\upc d_j\right)=\bigcup_{i\in I, \, j\in J}(\upc
    c_i \bigcap \upc d_j)=\bigcup_{i\in I,\, j\in J}\upc (c_i \vee d_j)
  \]
  with all $c_i\vee d_j$ compact, because the supremum of two compacts is
  compact. Moreover, using the existing distributivity in $K(A)$,
  \[
    \bigwedge_{i\in I, \, j\in J} (c_i \vee d_j)=\left(\bigwedge_{i\in
        I}c_i\right)\vee \left(\bigwedge_{j\in j}d_j\right)\leq a\wedge a=a.
  \]
  Then $G\medcap H$ belongs to $\psi_a$.

  Now, put, for every $a\in A$,
  \[
    t(a) =\bigvee S_a=\psi_a.
  \]
  By the defnition of $S_a$, $t:A\ra FA$ is indeed a right adjoint of $\ga$ in
  $\POSS$. It remains to show that the map $t$ is continuous (equivalently, it
  preserves directed suprema). We know that the sets
  $U^\#=\{\phi \in FA\mid U\in \phi\}$, $U\in \goo A$, form a base of the
  topology of $FA$ (see Examples \ref{exs:1}(3)). And we have that
  \[
    t^{-1}( U^\#)=\{a\in A\mid U \in t(a)\} =\{a\in A\mid\bigwedge k(U)\leq
    a\}= \big\uparrow (\bigwedge k(U)) ;
  \]
  thus $t^{-1}(U^\#)$ is open because, by hypothesis, $\bigwedge k(U)$ is
  compact.
\end{proof}



\begin{thebibliography}{10}

\bibitem{AJ94}
Samson Abramsky and Achim Jung.
\newblock Domain theory.
\newblock In S.~Abramsky, D.~M. Gabbay, and T.~S.~E. Maibaum, editors, {\em
  Handbook of Logic in Computer Science}, volume~3, pages 1--168. Clarendon
  Press, 1994.

\bibitem{AHR89}
Ji{\v{r}}{\'i} Ad{\'{a}}mek, Horst Herrlich, and Jan Reiterman.
\newblock Cocompleteness almost implies completeness.
\newblock In {\em Proc. Conf. Categorical Topology. Prague 1988}. World
  Scientific, 1989.

\bibitem{AHS90}
Ji{\v{r}}{\'i} Ad{\'a}mek, Horst Herrlich, and George~E. Strecker.
\newblock {\em Abstract and concrete categories: {T}he joy of cats}.
\newblock Pure and Applied Mathematics (New York). John Wiley \& Sons Inc., New
  York, 1990.
\newblock {Republished in: Reprints in Theory and Applications of Categories,
  No. 17 (2006) pp.~1--507}.

\bibitem{ART88}
Ji{\v{r}\'i} Ad{\'{a}}mek, Rosick{\'{y}} Ji{\v{r}\'i}, and V\v{e}ra
  Trnkov{\'{a}}.
\newblock Are all limit-closed subcategories of locally presentable categories
  reflective?
\newblock In {\em Categorical {Algebra} and its {Applications}}, pages 1--18.
  Springer, Berlin, Heidelberg, 1988.

\bibitem{AR88}
Ji{\v{r}\'i} Ad{\'{a}}mek and Ji{\v{r}\'i} Rosick{\'{y}}.
\newblock Intersections of reflective subcategories.
\newblock {\em Proceedings of the American Mathematical Society},
  103(3):710--712, March 1988.

\bibitem{ARV10}
Ji{\v{r}}{\'i} Ad{\'a}mek, Ji{\v{r}}{\'i} Rosick{\'y}, and Enrico Vitale.
\newblock {\em Algebraic Theories}.
\newblock Cambridge University Press, 2010.

\bibitem{ASV15}
Ji{\v{r}\'i} Ad{\'a}mek, Lurdes Sousa, and Ji{\v{r}\'{i}} Velebil.
\newblock Kan injectivity in order-enriched categories.
\newblock {\em Mathematical Structures in Computer Science}, 25(01):6--45,
  January 2015.

\bibitem{AK81}
Ji{{\v{r}}\'{i}} Ad{\'{a}}mek and V{\'{a}}clav Koubek.
\newblock Completions of concrete categories.
\newblock {\em Cahiers de Topologie et Géométrie Différentielle
  Catégoriques}, 22(2):209--228, 1981.

\bibitem{AS17}
Ji{{\v{r}}}{\'{i}} Ad{\'{a}}mek and Lurdes Sousa.
\newblock {KZ}-monadic categories and their logic.
\newblock {\em Theory and Applications of Categories}, 32(10):338--379, 2017.

\bibitem{BF06}
Marta~C. Bunge and Jonathon Funk.
\newblock {\em Singular Coverings of Toposes}.
\newblock Springer Berlin Heidelberg, 2006.

\bibitem{CS11}
Margarida Carvalho and Lurdes Sousa.
\newblock Order-preserving reflectors and injectivity.
\newblock {\em Topology and its Applications}, 158(17):2408--2422, November
  2011.

\bibitem{CS15}
Margarida Carvalho and Lurdes Sousa.
\newblock On {Kan}-injectivity of {Locales} and {Spaces}.
\newblock {\em Applied Categorical Structures}, pages 1--22, October 2015.

\bibitem{Day75}
Alan Day.
\newblock Filter monads, continuous lattices and closure systems.
\newblock {\em Canadian Journal of Mathematics}, 27(0):50--59, January 1975.

\bibitem{Esc98}
Mart{\'{\i}}n~H{\"o}tzel Escard{\'o}.
\newblock Properly injective spaces and function spaces.
\newblock {\em Topology and its Applications}, 89(1-2):75--120, November 1998.

\bibitem{Esc03}
Mart{\'{\i}}n~H{\"o}tzel Escard{\'o}.
\newblock Injective locales over perfect embeddings and algebras of the upper
  powerlocale monad.
\newblock In {\em Proceedings of the {I}nternational {C}onference on
  {A}pplicable {G}eneral {T}opology ({A}nkara, 2001)}, volume~4, pages
  193--200, 2003.

\bibitem{EF99}
Mart{\'\i}n~H{\"o}tzel Escard{\'o} and Robert~C. Flagg.
\newblock Semantic domains, injective spaces and monads.
\newblock {\em Electronic Notes in Theoretical Computer Science}, 20:229--244,
  1999.

\bibitem{FW90}
Barry Fawcett and Richard~J. Wood.
\newblock Constructive complete distributivity. {I}.
\newblock {\em Mathematical Proceedings of the Cambridge Philosophical
  Society}, 107(1):81--89, January 1990.

\bibitem{GHK+03}
Gerhard Gierz, Karl~Heinrich Hofmann, Klaus Keimel, Jimmie~D. Lawson,
  Michael~W. Mislove, and Dana~S. Scott.
\newblock {\em Continuous lattices and domains}, volume~93 of {\em Encyclopedia
  of Mathematics and its Applications}.
\newblock Cambridge University Press, Cambridge, 2003.

\bibitem{Hof13a}
Dirk Hofmann.
\newblock A four for the price of one duality principle for distributive
  spaces.
\newblock {\em Order}, 30(2):643--655, July 2013.

\bibitem{Jun04}
Achim Jung.
\newblock Stably compact spaces and the probabilistic powerspace construction.
\newblock In J.~Desharnais and P.~Panangaden, editors, {\em Domain-theoretic
  Methods in Probabilistic Processes}, volume~87 of {\em entcs}, pages 5--20.
  Elsevier, November 2004.
\newblock 15pp.

\bibitem{Kel82}
G.~Max Kelly.
\newblock {\em Basic concepts of enriched category theory}, volume~64 of {\em
  London Mathematical Society Lecture Note Series}.
\newblock Cambridge University Press, Cambridge, 1982.
\newblock {Republished in: Reprints in Theory and Applications of Categories.
  No.~10 (2005), 1--136}.

\bibitem{Koc95}
Anders Kock.
\newblock Monads for which structures are adjoint to units.
\newblock {\em Journal of Pure and Applied Algebra}, 104(1):41--59, October
  1995.

\bibitem{KV16a}
Alexander Kurz and Ji{{\v{r}}}{\'{i}} Velebil.
\newblock Quasivarieties and varieties of ordered algebras: regularity and
  exactness.
\newblock {\em Mathematical Structures in Computer Science}, pages 1--42,
  January 2016.

\bibitem{Law73}
F.~William Lawvere.
\newblock Metric spaces, generalized logic, and closed categories.
\newblock {\em Rendiconti del Seminario Matem{\`{a}}tico e Fisico di Milano},
  43(1):135--166, December 1973.
\newblock {Republished in: Reprints in Theory and Applications of Categories,
  No. 1 (2002), 1--37}.

\bibitem{MS04}
John MacDonald and Manuela Sobral.
\newblock Aspects of monads.
\newblock In Maria~Cristina Pedicchio and Walter Tholen, editors, {\em
  Categorical Foundations: Special Topics in Order, Topology, Algebra, and
  Sheaf Theory}, volume~97 of {\em Encyclopedia Math. Appl.}, pages 213--268.
  Cambridge Univ. Press, Cambridge, 2004.

\bibitem{Nac65}
Leopoldo Nachbin.
\newblock {\em Topology and order}.
\newblock Translated from the Portuguese by Lulu Bechtolsheim. Van Nostrand
  Mathematical Studies, No. 4. D. Van Nostrand Co., Inc., Princeton,
  N.J.-Toronto, Ont.-London, 1965.

\bibitem{PS88}
Ale\v{s} Pultr and Ji\v{r}\'{\i} Sichler.
\newblock Frames in {P}riestley's duality.
\newblock {\em Cahiers de Topologie et G{\'{e}}om{\'{e}}trie
  Diff{\'{e}}rentielle Cat{\'{e}}goriques}, 29(3):193--202, 1988.

\bibitem{Ran52}
George~N. Raney.
\newblock Completely distributive complete lattices.
\newblock {\em Proceedings of the American Mathematical Society},
  3(5):677--680, May 1952.

\bibitem{RW94}
Robert Rosebrugh and Richard~J. Wood.
\newblock Constructive complete distributivity {IV}.
\newblock {\em Applied Categorical Structures}, 2(2):119--144, 1994.

\bibitem{RW04}
Robert Rosebrugh and Richard~J. Wood.
\newblock Split structures.
\newblock {\em Theory and Applications of Categories}, 13(12):172--183, 2004.

\bibitem{Sco72}
Dana Scott.
\newblock Continuous lattices.
\newblock In {\em Toposes, algebraic geometry and logic (Conf., Dalhousie
  Univ., Halifax, N. S., 1971)}, volume 274, pages 97--136. Springer, Lect.
  Notes Math., 1972.

\bibitem{Sim82}
Harold Simmons.
\newblock A couple of triples.
\newblock {\em Topology and its Applications}, 13(2):201--223, March 1982.

\bibitem{Sou17}
Lurdes Sousa.
\newblock A calculus of lax fractions.
\newblock {\em Journal of Pure and Applied Algebra}, 221(2):422--448, 2017.
\newblock algebra.

\bibitem{Wyl81}
Oswald Wyler.
\newblock Algebraic theories of continuous lattices.
\newblock In {\em Continuous lattices (Conf., Bremen, 1979)}, volume 871, pages
  390--413. Springer, Lect. Notes Math., 1981.

\bibitem{Wyl85}
Oswald Wyler.
\newblock Algebraic theories for continuous semilattices.
\newblock {\em Archive for Rational Mechanics and Analysis}, 90(2):99--113,
  1985.

\bibitem{Zob76}
Volker Z{\"o}berlein.
\newblock Doctrines on 2-categories.
\newblock {\em Mathematische Zeitschrift}, 148(3):267--279, October 1976.

\end{thebibliography}

\end{document}